\documentclass[msom,sglanonrev]{informs4}
\RequirePackage{tgtermes}
\RequirePackage{newtxtext}
\RequirePackage{newtxmath}
\RequirePackage{bm}
\RequirePackage{endnotes}
\OneAndAHalfSpacedXII % Current default line spacing
\usepackage{algorithm}
\usepackage{multirow}
\usepackage{booktabs}
\usepackage{algpseudocode}
\usepackage{tikz}
\usetikzlibrary{patterns}
\usetikzlibrary{decorations.pathreplacing}
\usepackage{amsmath}
\usepackage{etoolbox}
\AtBeginEnvironment{proposition}{\vspace{-5pt}} 
\AtEndEnvironment{proposition}{\vspace{-5pt}}
\AtBeginEnvironment{figure}{\vspace{-5pt}} 
\AtEndEnvironment{figure}{\vspace{-15pt}}
\AtBeginEnvironment{remark}{\vspace{-5pt}} 
\usepackage{hyperref}
\usepackage{ulem}

\newcommand{\citeapp}[1]{\cite{#1}}%for arxiv
\newcommand{\citepapp}[1]{\citep{#1}}%for arxiv

\newcommand{\ignore}[1]{}
\newcommand{\zzhdeltext}[1]{}%
\newcommand{\deltext}[1]{}%

\usepackage{natbib}
 \bibpunct[, ]{(}{)}{,}{a}{}{,}%
 \def\bibfont{\small}%
\EquationsNumberedThrough    % Default: (1), (2), ...
\TheoremsNumberedThrough     % Preferred (Theorem 1, Lemma 1, Theorem 2)
\ECRepeatTheorems  %  

\MANUSCRIPTNO{MSOM-0001-2024.00}

\begin{document}
%%%%%%%%%%%%%%%%

% Outcomment only when entries are known. Otherwise leave as is and
%   default values will be used.
%\setcounter{page}{1}
%\VOLUME{00}%
%\NO{0}%
%\MONTH{Xxxxx}% (month or a similar seasonal id)
%\YEAR{0000}% e.g., 2005
%\FIRSTPAGE{000}%
%\LASTPAGE{000}%
%\SHORTYEAR{00}% shortened year (two-digit)
%\ISSUE{0000} %
%\LONGFIRSTPAGE{0001} %
%\DOI{10.1287/xxxx.0000.0000}%

% Author's names for the running heads
% Sample depending on the number of authors;
% \RUNAUTHOR{Jones}
% \RUNAUTHOR{Jones and Wilson}
% \RUNAUTHOR{Jones, Miller, and Wilson}
% \RUNAUTHOR{Jones et al.} % for four or more authors
% Enter authors following the given pattern:
%\RUNAUTHOR{}
\RUNAUTHOR{Xiangting Liu et al.}

% Title or shortened title suitable for running heads. Sample:
% \RUNTITLE{Predictive Maintenance in Manufacturing}
% Enter the (shortened) title:
\RUNTITLE{A non-parametric framework to tackle contextual dd-ccp}

% Full title. Sample:
% \TITLE{Optimal Resource Allocation in Humanitarian Logistics: A Stochastic Programming Approach}
% Enter the full title:
\TITLE{Solving contextual chance-constrained programming under decision-dependent uncertainty}

% Block of authors and their affiliations starts here:
% NOTE: Authors with same affiliation, if the order of authors allows,
%   should be entered in ONE field, separated by a comma.
%   \EMAIL field can be repeated if more than one author
\ARTICLEAUTHORS{%
%\AUTHOR{John Doe,\textsuperscript{a} Jane Smith,\textsuperscript{b}}
%\AFF{\textsuperscript{a}Department of Industrial Engineering, University of XYZ, \EMAIL{john.doe@xyz.edu; \textsuperscript{b}Department of Computer Science, University of ABC, \EMAIL{jane.smith@abc.edu}} 
\AUTHOR{Xiangting Liu, Shengran Wang, Kaile Yan, Zhi-Hai Zhang*}
\AFF{Department of Industrial Engineering, Tsinghua University,
\EMAIL{liu-xt22@mails.tsinghua.edu.cn}, 
\EMAIL{wang-sr22@mails.tsinghua.edu.cn},
\EMAIL{ykl22@mails.tsinghua.edu.cn}, 
\EMAIL{zhzhang@tsinghua.edu.cn}\URL{}}
% Enter all authors
} % end of the block

\ABSTRACT{%
% Enter your abstract
\textbf{Problem definition}: We study contextual chance-constrained programming under decision-dependent uncertainty. In this setting, a decision not only needs to satisfy constraints but also alters the distribution of uncertain outcomes. This dependency makes the problem particularly difficult: because feasibility probabilities vary with decisions, it creates both statistical endogeneity and computational intractability. 
\textbf{Methodology/results}: To address this, we propose a nonparametric approximation method based on Contextual Cluster Weights (CCW). For any given decision and context, CCW constructs a local neighborhood (cluster) of ``similar" historical observations and assigns them equal weight. This approach successfully renders both the objective and chance constraints tractable, while providing uniform-in-decision consistency guarantees. Furthermore, we develop reformulations that use pre-calculated clusters. We show that under a specific nestedness condition, these reformulations yield a convex feasible region, which allows for efficient solving. Experiments, including a case study with JD.com, demonstrate that our method outperforms benchmarks in solution quality, feasibility reliability, and runtime. 
\textbf{Managerial implications}: This framework offers a scalable and data-driven approach for firms to make reliable operational decisions when their actions influence uncertainty. It effectively balances performance, risk, and robustness, while remaining interpretable and implementable in practice.
}%

\FUNDING{}%This research is partially supported by the National Natural Science Foundation of China (Grant no. 72171129, 72188101, 92467302, 72250710683)}

%Supplemental Material:
%Data Ethics & Reproducibility Note:

% Sample
%\KEYWORDS{Stochastic programming, Decision support,Uncertainty, Disaster response, Optimization}

% Fill in data. If unknown, outcomment the field
\KEYWORDS{Contextual Optimization, Decision-dependent Uncertainty, Chance Constraint Programming, Pricing and Revenue Management} 

\begin{center}
\Large {Solving contextual chance-constrained programming under decision-dependent uncertainty}
\end{center}
\vspace{0.2cm}
\begin{center}
\begin{small}
    Xiangting Liu, \\
    Department of Industrial Engineering, Tsinghua University, 100084, China\\
    Shengran Wang, \\
    Department of Industrial Engineering, Tsinghua University, 100084, China\\
    Kaile Yan, \\
    Department of Industrial Engineering, Tsinghua University, 100084, China\\
    Zhi-Hai Zhang*\\
    Department of Industrial Engineering, Tsinghua University, 100084, China, zhzhang@tsinghua.edu.cn
\end{small}
\end{center} 

\vspace{0.2cm}
\textbf{Abstract:} We study contextual chance-constrained programming under decision-dependent uncertainty. In this setting, a decision not only needs to satisfy constraints but also alters the distribution of uncertain outcomes. This dependency makes the problem particularly difficult: because feasibility probabilities vary with decisions, it creates both statistical endogeneity and computational intractability. To address this, we propose a nonparametric approximation method based on Contextual Cluster Weights (CCW). For any given decision and context, CCW constructs a local neighborhood (cluster) of ``similar" historical observations and assigns them equal weight. This approach successfully renders both the objective and chance constraints tractable, while providing uniform-in-decision consistency guarantees. Furthermore, we develop reformulations that use pre-calculated clusters. We show that under a specific nestedness condition, these reformulations yield a convex feasible region, which allows for efficient solving. Experiments, including a case study with JD.com, demonstrate that our method outperforms benchmarks in solution quality, feasibility reliability, and runtime. This framework offers a scalable and data-driven approach for firms to make reliable operational decisions when their actions influence uncertainty. It effectively balances performance, risk, and robustness, while remaining interpretable and implementable in practice.

\textbf{Key words: }{Contextual Optimization, Decision-dependent Uncertainty, Chance Constraint Programming, Pricing and Revenue Management} 
%\HISTORY{Received: Month DD, YYYY; Accepted: Month DD, YYYY; Published Online: Month DD, YYYY}

%\maketitle% for arxiv
%%%%%%%%%%%%%%%%%%%%%%%%%%%%%%%%%%%%%%%%%%%%%%%%%%%%%%%%%%%%%%%%%%%%%%

% Text of your paper here
\vspace{0.3cm}
%\hline
\vspace{0.5cm}

\section{Introduction}

Contextual Chance-Constrained Programming (CCCP) provides a principled, data-driven way to make operational decisions that are reliable with respect to observable contexts. In CCCP, one models the conditional distribution of uncertain outcomes given covariates and seeks prescriptions that optimize expected performance while ensuring a probabilistic feasibility guarantee \citep{shapiro2003monte, vinod2023sample}. Contextual variants integrate learning into risk control and provide conditional feasibility in applications such as supply chains \citep{YeChengHijaziVanHentenryck2025}, safe control \citep{Shirai2022ChanceConstrainedManipulation},facility location \citep{SaldanhaDaGama2024FacilityLocationChance},  among others. 

Formally, a CCCP problem can be expressed as
\begin{subequations}\label{eq-originalCSO}
    \begin{align}
    \min_{z \in \mathcal{Z}} \quad & \mathbb{E}_Y [l(z; Y) \mid X = x], \label{cccp-obj}\\
    \text{s.t.} \quad & \mathbb{P}\big(\psi_{\xi}(z, Y) \le 0 \,\big|\, X = x\big) \ge 1 - \alpha_{\xi}, \forall \xi \in \{1, \ldots, \Xi\}.\label{cccp-chanceconstraint}
    \end{align}
\end{subequations}
Here, \(z\in\mathcal{Z}\subseteq \mathbb{R}^n\) denotes the decision variables, where \(\mathcal{Z}\) is a compact and bounded value space. \(Y\) represents the random variables, \(x \in \mathcal{X}\) is the context and \(l(\cdot \mid \cdot)\) is the loss function, whose expectation is the objective function (\ref{cccp-obj}) to be optimized. Constraints (\ref{cccp-chanceconstraint}) are the chance constraints and $\Xi$ is the number of individual chance constraints. It ensures that the probability of the events defined by the constraint function \(\psi_{\xi}(\cdot)\) being satisfied is at least \(1-\alpha_{\xi}\), where \(\alpha_{\xi} \in (0, 1)\) is the pre-specified risk level for the constraint, representing the maximum allowable probability of constraint violation. In this paper, unless otherwise specified, Chance-Constrained Programming specifically refers to the programming problem with Individual Chance Constraint(s).

This field has received extensive attention in recent years. A central challenge in CCCP lies in the intractability of evaluating conditional feasibility probabilities, which has motivated a rich line of research on approximation and reformulation methods for chance constraints, including approaches with uniform convergence guarantees where the uncertainty is exogenous.

However, in many real applications, the underlying uncertainty is not exogenous but endogenous to the decision itself. For instance, demand depends on prices, arrival patterns hinge on service promises, and congestion levels rely on capacity choices. This decision-dependent uncertainty (DDU) fundamentally changes the nature of CCCP: the decision influences both the objective distribution and the feasibility probability, introducing endogeneity and non-convexity \citep{GoelGrossmann2004, Gallego1994, shapiro2003monte}. The resulting “1+1” setting—CCCP with DDU—captures a richer and more realistic class of problems central to data-driven operations \citep{blackhurst2007network, goda2018stochastic, soroudi2013decision}, but is substantially more challenging to approximate and solve.

Bringing decision-dependent uncertainty into CCCP introduces three intertwined challenges that fundamentally complicate both approximation and optimization.

First, approximation itself becomes decision-dependent. In contrast to exogenous uncertainty, the conditional distribution of outcomes must now be learned jointly over decisions and contexts, while chance constraints involve discontinuous indicator functions whose probabilities depend on the decision. As a result, standard predictive-to-prescriptive decision pipelines—where a predictive model of uncertainty is learned first and then embedded into an optimization problem—no longer apply directly, and must be refined to account for endogeneity between decisions and uncertainty \citep{ban2019big, huber2019data, oroojlooyjadid2020applying, bertsimas2020predictive}.

Second, theoretical guarantees must hold uniformly over the decision space. In decision-dependent settings, data-driven approximations of objectives and feasibility probabilities are evaluated at decision points that are selected by the optimization procedure itself. Pointwise or average-case guarantees are therefore insufficient, as the optimizer may select decisions where approximation errors are largest. Ensuring uniform-in-decision consistency is thus essential for the reliability of both learned objectives and chance constraints \citep{lin2022data, CutlerDiazDrusvyatskiy2024, HeuserKesselheim2025}.

Third, the resulting optimization problems pose significant computational challenges. Decision-dependent chance constraints can induce feasibility regions that are non-convex or even discontinuous, and naive sample-average or scenario-based formulations typically lead to large-scale mixed-integer programs that are intractable in practice. This motivates the need for tractable reformulations and decomposition-based solution methods \citep{pagnoncelli2009sample, VanParys2021, ZhuYuBayraksan2024}.

To address these challenges, we develop a non-parametric approximation and solution framework for contextual chance-constrained programming under decision-dependent uncertainty (CCCP-DDU). The main contributions are threefold.

First, we identify and formalize Contextual Cluster Weights (CCW) as a pivotal class of non-parametric estimators. While general wSAA methods struggle with the discontinuity of chance constraints under DDU, CCW leverages a ``set-based'' weighting logic. This structure allows us to bridge the gap between machine learning clusters and optimization tractability, enabling the first derivation of uniform-in-decision consistency and sample efficiency for contextual chance constraints where decisions actively shift the uncertainty distribution.

Second, we develop a scalable solution framework for the resulting CCCP-DDU models. We propose a reformulation that significantly reduces the computational burden of CCW-based chance constraint approximations. Furthermore, under an additional structural condition that induces convexity of the approximated feasible region in downstream decisions, we show that the problem admits an efficient solution via Benders decomposition.

Third, we demonstrate the practical effectiveness of the proposed framework through extensive numerical experiments and a real-world case study based on transaction-level data from JD.com. The results show that our approach consistently outperforms parametric benchmarks and existing non-parametric methods in terms of solution quality, feasibility reliability, and computational efficiency in settings with endogenous uncertainty.

Together, these contributions advance both the theory and practice of contextual chance-constrained optimization by providing a principled, scalable, and data-driven approach to problems where decisions influence uncertainty.

The rest of the paper is structured as follows. Section \ref{sec::LR} reviews relevant Literature. Section \ref{sec::preliminaries} introduces the problem of CCCP-DDU and outlines the approximation with CCW. Section \ref{sec::Asymptotic} presents the assumptions and provides the asymptotic results. Section \ref{sec::Solution} provides a solution framework to tackle the resulting model. Section \ref{sec::PSNP} applies this framework to the price-setting newsvendor problem as an example. Section \ref{sec::NumericalExperiment} validates our method through extensive numerical experiments. Section \ref{sec::case} illustrates the practical relevance of our approach with a real-world case study. Finally, Section \ref{sec::conclusion} concludes the paper and outlines avenues for future research.

\section{Relevant Literature}\label{sec::LR}
We first review the literature on basic contextual stochastic optimization (CSO), which is the most relevant research stream to the present work. Then we provide a review about its two extensions with chance constraint and decision-dependent uncertainty. CSO is a framework that integrates observable context into decision-making under uncertainty. CSO adapts to heterogeneity in both contexts and uncertainty by conditioning decisions on features or covariates, making it a powerful tool for real-world applications.

Recent work in CSO has developed along three major paradigms \citep{Sadana2025ContextualOptimizationSurvey}: decision rule optimization, sequential learning and optimization (SLO), and integrated learning and optimization (ILO). Decision rule optimization focuses on deriving direct decision-making policies, using methods such as linear policies \citep{ban2019big}, decision tree ensembles \citep{huber2019data}, and neural networks \citep{oroojlooyjadid2020applying}. As an extension, Operational Data Analysis framework proposed by \cite{feng2023framework} selects a best statistic as the decision, which is applied to solve the contextual newsvendor problem \citep{feng2025contextual}. SLO aims to minimize the discrepancy between the approximated distribution of uncertainty and its true distribution \citep{bertsimas2020predictive}, whereas ILO aims to optimize the integrated objective function, combining learning and optimization tasks.

These advances have paved the way for two major extensions of CSO: CSO with chance constraints and CSO with decision-dependent uncertainty.

\subsection{CSO with Chance Constraints (CCCP)}

% A contextual stochastic optimization problem with chance constraints (CSO–CC) can be formulated as follows:
% \setlength{\abovedisplayskip}{-2pt}
% \setlength{\belowdisplayskip}{-1pt}
% \begin{equation}
%     \begin{aligned}
%     \min_{z \in \mathcal{Z}} \quad & \mathbb{E}_Y [c(z; Y) \mid X = x], \\
%     \text{s.t.} \quad & \mathbb{P}\big(\psi_{\xi}(z, Y) \le 0 \,\big|\, X = x\big) \ge 1 - \alpha_{\xi},\\
%     &\quad\quad\quad \forall \xi \in \{1, \ldots, \Xi\}.
%     \end{aligned}
% \end{equation}

% The key feature of this problem is the chance constraints: \(\mathbb{P}(\psi_{\xi}(z, Y) \le 0) \ge 1-\alpha_{\xi}\). It ensures that the probability of the events defined by the constraint function \(\psi_{\xi}(\cdot)\) being satisfied is at least \(1-\alpha_{\xi}\), where \(\alpha_{\xi} \in (0, 1)\) is the pre-specified risk level for the constraint, representing the maximum allowable probability of constraint violation.
CCCP frameworks have been widely applied in diverse domains, including supply chain design \citep{Liu2021FacilityLocationChance, SaldanhaDaGama2024FacilityLocationChance}, robotic control under uncertainty \citep{Shirai2022ChanceConstrainedManipulation}, and safe reinforcement learning \citep{Peng2021CCAC}.

% This framework enables the decision-making process to adapt not only to the uncertainty in the system but also to the contextual factors that affect uncertainty.

Traditional chance-constrained optimization, as introduced by \cite{shapiro2003monte}, provides a framework for enforcing probabilistic feasibility under uncertainty. Classical methods include the sample-average approximation \citep{pagnoncelli2009sample}, the scenario approach \citep{calafiore2006scenario}, and the quantile-based formulation \citep{vinod2023sample}. While traditional chance-constrained methods have been successfully applied in many practical settings, most of them do not possess uniform convergence, which is solved by the inclusion of CSO.

CCCP research has advanced by integrating contextual information into probabilistic feasibility guarantees, focusing on two complementary directions. The first direction emphasizes contextual distribution approximation. For instance, \cite{lin2022data} verified uniform convergence of the chance constraint with wSAA under exogenous uncertainty distributions. \cite{TianJiangPangGuoJinWang2024} approximated conditional distributions using standard machine learning regressors, while \cite{YeChengHijaziVanHentenryck2025} incorporated context-dependent forecasts into large-scale operational decision-making. More recently, \cite{QinZouLiuLiu2025} proposed piecewise affine decision rules that map contextual features to decision variables within chance-constrained optimization.

The second direction addresses learning-for-optimization with theoretical guaranties.  \cite{HeuserKesselheim2025} derived polynomial sample complexity results for learning contextual value distributions to achieve near-optimal policies. Additionally, \cite{PacchianoGhavamzadehBartlett2025} investigated contextual bandits with stage-wise high-probability constraints, offering algorithms balancing exploration and per-round constraint satisfaction. In batch contextual settings, \cite{rahimian2023data} provided finite-sample feasibility guarantees for data-driven contextual chance-constrained programs, while \cite{vinod2023sample} derived explicit sample-complexity and tightening bounds for a sample quantile-based approximation of non-convex separable chance constraints.

\subsection{CSO with Decision-dependent Uncertainty}

A contextual stochastic optimization problem with decision-dependent uncertainty (CSO–DDU) can be formulated as follows:
\begin{equation}\label{eq-CSODDU}
    \min_{z \in \mathcal{Z}} \mathbb{E}_Y[l(z; Y(z)) \mid X = x],
\end{equation}
with the key difference to \eqref{cccp-obj} being that the uncertainty $Y$ is now influenced by decision $z$. Specifically, we focus on the setting where the decision alters the underlying probability distribution itself (i.e., $Y \sim f(\cdot|z, x)$), often referred to as 'Type I' decision-dependent uncertainty \citep{jonsbraten1998class, GoelGrossmann2004}.

This kind of DDU exists in many real-world applications, including supply chain management \citep{blackhurst2007network}, energy systems \citep{soroudi2013decision}, finance \citep{goda2018stochastic}, and price-setting newsvendor problem \citep{liu2023solving}.  It complicates both the modeling and the optimization processes.

To address CSO-DDU, researchers have developed both parametric and non-parametric approaches. Parametric models assume a pre-specified functional form between decisions, covariates and uncertain parameters \citep{young1978price, Gallego1994, GoelGrossmann2004}. While computationally tractable, these methods rely heavily on correct model specification, and misspecification can lead to poor decisions.

% However, incorporating DDU into wSAA formulations introduces substantial computational challenges. The feasible region often becomes non-convex, making optimization NP-hard \citep{Shapiro2021}. \cite{bertsimas2020predictive} provided convergence guarantees for decision-dependent prescriptions, whereas \cite{VanParys2021} highlighted robustness mechanisms through distributionally robust reformulations. While consistency can be established under mild regularity conditions, examples exist where uniform convergence fails. When the loss function $c$ (or inner function) is discontinuous or non-smooth, conventional stochastic approximation or sample-average-based methods cannot guarantee convergence. Each decision may correspond to a distinct, irregular conditional law, making expectations ill-defined or unstable. Computational cost is another obstacle, as decomposition techniques like Benders decomposition or cutting-plane methods scale poorly with problem size \citep{Luedtke2008}.

In contrast, non-parametric approaches aim to relax structural assumptions. \cite{bertsimas2020predictive} proposed the wSAA, arguably the most well-known non-parametric method, which conditioned on observed features or decision variables to approximate the conditional distribution of uncertain outcomes. \cite{WangTian2023} extended this approach by analyzing finite-sample performance gaps of wSAA under limited data, illustrating practical limitations in high-dimensional settings. Beyond wSAA, distributionally robust optimization \citep{ZhuYuBayraksan2024} and machine learning-based methods \citep{HikimaTakeda2025} are also commonly employed.

However, these existing contributions primarily focus on optimizing expected objectives (e.g., minimizing expected cost). Extending these non-parametric frameworks to enforce chance constraints under DDU introduces fundamental challenges. The feasible region often becomes non-convex, making optimization NP-hard. While consistency can be established for continuous objectives under mild regularity conditions \citep{bertsimas2020predictive, VanParys2021}, these guarantees typically fail for chance constraints.

The core conflict is twofold. First, the indicator function $\mathbb{I}(\psi_{\xi}(z, Y) \le 0)$ inherent in CCCP is discontinuous. This violates the smoothness assumptions, rendering uniform convergence proofs inapplicable. Second, computationally, approximating these discontinuous functions under DDU requires coupling binary variables with decision-dependent uncertainty. This transforms the problem into a large-scale Mixed-Integer Non-Linear Program (MINLP), which renders standard decomposition techniques like Benders or cutting-plane methods ineffective or poorly scalable.

Consequently, CSO-DDU remains an underdeveloped area that requires (i) novel techniques to handle discontinuity, and (ii) tractable reformulations to bypass combinatorial intractability.

We aim to bridge these gaps by developing a general, non-parametric solution framework for CCCP-DDU. The proposed approach targets the core difficulties of discontinuity, convergence reliability, and computational efficiency, providing both theoretical justification and practical tractability for decision-dependent stochastic systems, which generally belongs to the ILO framework.

% Our work contributes to filling this gap by proposing a novel, non-parametric, data-driven framework for solving decision-dependent optimization problems. We extend the work of \cite{bertsimas2020predictive}, focusing on both the theoretical guarantees of convergence and practical solution methods that can scale efficiently in large, complex decision spaces.

% \subsection{Contribution}

\section{Preliminaries}\label{sec::preliminaries}

In this section, we detail the problem setup, provide the mathematical formulation of the problem, and state how we approximate the objective and the chance constraint.

The CCCP-DDU considered is formulated as follows.
\begin{subequations}\label{eq-True-ddccp}
    \begin{align}
    \min_{z \in \mathcal{Z}}&\quad L(z\mid X=x)= E_{Y\sim f(\cdot \mid z, x)}[l(z, Y)],\label{eq-obj}\\
    s.t.&\quad g_{\xi}(z \mid X=x) = \mathbb{P}_{Y\sim f(\cdot \mid z, x)}(\psi_{\xi}(z, Y) \le 0) \ge 1-\alpha_{\xi}, \forall \xi \in \{1,\cdots,\Xi\},\label{eq-cc}
\end{align}
\end{subequations}
where \(\mathcal{Z}\) is the value space for the decision variable \(z\), \(f(\cdot \mid z, x)\) is the probability density function for \(Y\) given decision variable \(z\) and covariates \(x\), and \(l(\cdot)\) is the loss function.

Following wSAA proposed by \cite{bertsimas2020predictive}, we construct a data-driven approximation of the objective function $L$ based on a historical dataset $S_N={(z_1,x_1,y_1), (z_2,x_2,y_2), \cdots, (z_N,x_N,y_N)}$, which is formulated as follows.
%In order to approximate the objective function based on a historical dataset \(S_N=\{(z_1, x_1, y_1),\) \((z_2, x_2, y_2), \cdots, (z_N, x_N, y_N)\}\), we construct a data-driven approximation of $L$ by wSAA, which is,
\begin{equation} \label{eq-Appr-Obj}
    \hat{L}(z \mid X = x) = \sum_{i = 1}^N w_i(z, x) l(z; y_i),
\end{equation}
where $w_i(\cdot, \cdot): \mathcal{Z}\times\mathcal{X}\rightarrow \mathbb{R}$ is a weight function that describes the similarity between the context of historical data $i$ and the input context. $\sum_{i=1}^N w_i(z,x) = 1$ and $w_i(z,x) \ge 0$.

Similarly, we approximate the feasible region of \eqref{eq-cc} with \textbf{Appr-CC}, namely 
%$\hat{P}(x) = $
\begin{equation}\label{Est-cc}
\begin{aligned}
    \hat{P}(x) = \biggl\{z\in \mathcal{Z}: \hat{g}_{\xi}(z\mid X=x) = \sum_{i=1}^N w_i(z,x) \mathbb{I}\left\{\psi_{\xi}(z,y_i) \le 0\right\} \ge 1-\alpha_{\xi},
    \forall \xi \in \{1,\cdots,\Xi\}\biggr\},
\end{aligned}
\end{equation}
where $\hat{g}_{\xi}(z\mid X=x)$ is the approximation of probability, and $\mathbb{I}(\cdot)$ is an indicator function. We refer to the model with objective \eqref{eq-Appr-Obj} and constraint \eqref{Est-cc} as \textbf{Appr-dd-ccp}. 

A fundamental challenge in CCCP-DDU is that general weight functions often fail to provide the uniform convergence required when the feasible region itself is decision-dependent. To address this, we restrict our focus to a specialized category of weight functions, which we term Contextual Cluster Weights (CCW). It corresponds to a specific category of weight functions and can be collectively written in the following manner.
\begin{equation}
    w_i(z,x) = \frac{\mathbb{I}\{(z_i, x_i) \in \mathcal{C}(z, x)\}}{|\mathcal{C}(z, x)|},
\end{equation}
where $\mathcal{C}(z, x)$ denotes the local neighborhood set constructed from $S_N$ for a specific query point $(z, x)$, and $|\mathcal{C}(z, x)|$ denotes the cardinality of $\mathcal{C}(z, x)$. This set contains the data points that are close to or share characteristics similar to the current context $(z, x)$ according to predefined rules. Data points in $\mathcal{C}(z, x)$ are assigned the same positive weight of $1/|\mathcal{C}(z, x)|$, while the remaining points receive a weight of zero.

Unlike generic weighting schemes, CCW maps the joint decision-context space into a finite collection of local neighborhoods (clusters). This `all-or-nothing' weighting logic is not merely a simplification; it is a strategic choice that enables us to:
\begin{itemize}
    \item Uniformize Convergence: By leveraging the finite VC-dimension of cluster-based indicators, we obtain uniform-in-decision consistency guarantees that are otherwise unattainable for discontinuous probability operators. We detail this convergence in Section \ref{sec::Asymptotic}.
    \item Structural Decoupling: It allows the separation of cluster identification from the continuous optimization of uncertainty-independent decisions, transforming a non-convex MINLP into a tractable structured program. We detail this decoupling in Section \ref{sec::Solution}.
\end{itemize}

A subset of the well-established weight functions introduced in \cite{bertsimas2020predictive} can be categorized as CCW. 

\begin{definition}[kNN weight]
    The k-Nearest Neighbor (kNN) weight function is given by,
    \begin{equation}\label{eq-kNN}
    w_i^{kNN}(z, x) = \frac{1}{k} \mathbb{I}\left\{(z_i, x_i) \text{ is a kNN of } (z, x)\right\},
    \end{equation}
    where \((z_i, x_i)\) is a kNN of \((z, x)\) if \(dist((z_i, x_i),(z, x))\) is one of the smallest $k$ numbers in \(\{dist((z_1, x_1),(z, x)), \cdots, dist((z_N, x_N),(z, x))\}\), and \(dist((z_i, x_i),(z, x))\) is the distance between \((z_i, x_i)\) and \((z, x)\). By this function, the \(k\)-nearest historical data points of \((z, x)\) have the same positive weight $1/k$, while other data points have the weight of 0.
\end{definition}

\begin{definition}[CART weight]
    The Classification and Regression Tree (CART) weight function is given by,
    \begin{equation}\label{eq-CART}
    \begin{aligned}
    &w_i^{CART}(z, x)= \frac{\mathbb{I}\!\left\{\mathcal{R}_{CART}(z, x)
    =\mathcal{R}_{CART}(z_i, x_i)\right\}}
    {\sum_{j=1}^N
    \mathbb{I}\left\{\mathcal{R}_{CART}(z, x)
    =\mathcal{R}_{CART}(z_j, x_j)\right\}},
    \end{aligned}
    \end{equation}
    where \(\mathcal{R}_{CART}\) is a map from \(\mathcal{Z}\times\mathcal{X}\) to \(\{1,\cdots,r\}\), the \(r\) different leaf nodes of a CART trained by $S_N$. \(\mathcal{R}_{CART}(z, x)\) returns the corresponding leaf node index given \((z, x)\). By this function, the historical data points that fall into the same leaf node as \((z, x)\) will have the same positive weight, while other data points will have the weight of 0.
\end{definition}

\begin{definition}[LSA weight]
    The Local Sample Average (LSA) weight function is given by,
    \begin{equation}\label{eq-LSA}
        w_i^{LSA}(z,x) = \frac{\mathbb{I}\left\{dist((z_i, x_i), (z,x)) \le h\right\}}{\sum_{j=1}^N \mathbb{I}\left\{dist((z_j, x_j), (z,x)) \le h\right\}},
    \end{equation}
    where \(h\) is a pre-determined bandwidth. By this function, the historical data points within a distance of \(h\) from \((z, x)\) will have the same weight, while the other will have a weight of 0. 
    % This weight is a special case of Kernel weight stated in \cite{bertsimas2020predictive}.
\end{definition}

Considering the specific weight functions, the weighted sample average approximation with CCW-hereafter referred to as the CCW approximation-exhibits excellent properties, especially when approximating expectations of discontinuous, bounded functions like probability. We explain this in the following section.

\section{Asymptotic Results}\label{sec::Asymptotic}

We first present the necessary assumptions and then establish the asymptotic properties of the CCW approximation.

\subsection{Assumptions}

In this section, we introduce the necessary assumptions used in the asymptotic analysis.

\begin{assumption}[Decomposition and Ignorability]
\label{ass::decomp&Ign}
Decision variables can be decomposed into $z=(z_1, z_2)\in \mathcal{Z}=\mathcal{Z}_1\times\mathcal{Z}_2$ where only $z_1\in \mathcal{Z}_1$ affects uncertainty, namely $Y(z_1, z_2)=Y(z_1, z_2')$ for all $(z_1, z_2), (z_1, z_2')\in \mathcal{Z}$. Here $\mathcal{Z}_1$ and $\mathcal{Z}_2$ are the value spaces for $z_1$ and $z_2$, respectively. For every $z \in \mathcal{Z}$, $Y(z)$ is independent of $Z$ conditioned on $X$. $\mathcal{Z}_1$, $\mathcal{Z}_2$ and $\mathcal{X}$ have a dimension of $d^{z_1}$, $d^{z_2}$ and $d^x$,  respectively. For brevity, we write $\mathcal{C}(z_1, z_2, x)=\mathcal{C}(z_1, x)$, $w(z_1, z_2, x)=w(z_1, x)$.
\end{assumption}

Assumption \ref{ass::decomp&Ign} follows the setting in \cite{bertsimas2020predictive}, ensuring that only part of the decisions affects uncertainty and that contextual information $X$ captures all historical dependence. $z_1$ and $z_2$ are referred to as the uncertainty-affecting decision and the uncertainty-independent decision, respectively.

\begin{assumption}[Equi-continuous]
\label{ass::continuous}
Both $l(z;y)$ and $\psi_{\xi}(z;y)$ are equicontinuous in $z$ for all $\xi\in \{1,\cdots,\Xi\}$. Namely, for any $z \in \mathcal{Z}$, $y\in\mathcal{Y}$, and $\epsilon>0$, there exists $\delta>0$ such that $|l(z;y) - l(z';y)|\le \epsilon$ for all $z'$ satisfying $\Vert z-z'\Vert\le\delta$. Likewise for $\psi_{\xi}(z;y)$.
\end{assumption}

\begin{assumption}[Regularity]
\label{ass::regularity}
The following conditions hold:
\begin{enumerate}
    \item $\mathcal{X}$, $\mathcal{Y}$, and $\mathcal{Z}$ are compact and bounded;
    \item $(z_i, x_i, y_i)$ are i.i.d. samples from a joint distribution $f(z,x,y)$;
    \item The joint density $f(z,x,y)$ is continuous and bounded away from zero, namely $0 < c < f(z,x,y) < C$, and $f(z, x) = \int_{\mathcal{Y}} f(z,x,y) dy$ is twice continuously differentiable.
\end{enumerate}
\end{assumption}

Assumptions \ref{ass::continuous} and \ref{ass::regularity} ensure that small perturbations in decisions or contexts do not induce abrupt changes in the conditional distribution of uncertainty, the loss or the probability. These conditions are satisfied in many parametric and non-parametric demand and arrival models commonly used in operations management \citep{bertsimas2020predictive, lin2022data}.

\begin{assumption}[Existence]
\label{ass::existence}
$\hat{P}(x)$ is non-empty and compact for all $x$.
\end{assumption}

This guarantees that the approximated feasible region is well-defined. In practice,  $\hat{P}(x)$ can be explicitly verified and maintained by calibrating the risk level $\alpha_\xi$.

\begin{assumption}[Bounded tails]
\label{ass::boundedtail}
There exists $\lambda>0$ such that
\begin{align}
    &\sup_{z \in \mathcal{Z},x\in\mathcal{X}} E\biggl[exp(\lambda |l(z;Y) -E[l(z;Y)\mid X=x, Z_1=z_1]|) \mid X=x, Z_1=z_1\biggr] < \infty.
\end{align}
\end{assumption}

This light-tail condition holds for many practical distributions such as Gaussian, ensuring finite exponential moments.

\begin{assumption}[Slater condition]
\label{ass::slater}
For any given $x$, there exists an optimal $z^*$ of \eqref{eq-True-ddccp} such that $\forall \epsilon>0$, there exists $z\in \mathcal{Z}$ with $\Vert z-z^* \Vert \le \epsilon$ and $g_{\xi}(z \mid X=x) > 1-\alpha_{\xi}$ for all $\xi=1,\cdots,\Xi$.
\end{assumption}

This assumption ensures the feasibility of the chance constraint. It holds unless there is only one feasible solution, which can be avoided by adjusting $\psi_{\xi}(z;y)$ or $\alpha$. With Assumptions \ref{ass::decomp&Ign} to \ref{ass::slater}, we proceed to analyze the consistency and optimality of the proposed CCW approximation method.

\subsection{Asymptotic consistency}

This section respectively gives the asymptotic results of the CCW approximation for the objective function \eqref{eq-obj} and the chance constraint \eqref{eq-cc}. We first state the definition of uniform consistency.

\begin{definition}[Uniform-in-decision Consistency]
\label{def::uniform consistency}
    An approximation for $l(\cdot;Y)$ as in (\ref{eq-Appr-Obj}) has strongly uniform-in-decision consistency, hereinafter referred to as strongly uniform consistency, if for any $x$,
    \begin{equation}\label{eq-consistency}
    \sup_{z\in\mathcal{Z}}\left|L(z\mid X=x) - \hat{L}(z\mid X=x) \right|\xrightarrow[]{N\to\infty} 0
    \end{equation}
    almost surely. And $\hat{L}(\cdot\mid \cdot)$ has weakly uniform consistency if (\ref{eq-consistency}) holds with probability. Definition for uniform consistency between $\hat{g}(z\mid X=x)$ and $g(z\mid X=x)$ is likewise.
\end{definition}

Based on Definition \ref{def::uniform consistency}, we provide the uniform consistency of $\hat{L}(\cdot\mid \cdot)$ in Lemmas \ref{lem::kNN}, \ref{lem::CART} and \ref{lem::LSA}.
\begin{lemma}[kNN consistency]
%(Lemma 5 in \cite{bertsimas2019predictions})
\label{lem::kNN}
    Suppose Assumptions \ref{ass::continuous}, \ref{ass::regularity} and \ref{ass::boundedtail} hold. For some $0<C<1, 0.5<\delta_{kNN}<1$, let $k=\lceil CN^{\delta_{kNN}}\rceil$, then $\hat{L}(z\mid X=x)$ based on \eqref{eq-kNN} weight has strongly uniform consistency.
\end{lemma}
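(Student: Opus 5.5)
The plan is to follow the argument of Bertsimas and Kallus (Lemma 5 of their predictive-to-prescriptive analysis), adapted to the joint input $w=(z_1,x)$. By Assumption \ref{ass::decomp&Ign}, the kNN weights depend on $z$ only through $z_1$. Write $\mu(z;w') = E[l(z;Y)\mid (Z_1,X)=w']$, so that $L(z\mid X=x)=\mu(z;(z_1,x))$. Let $\mathcal{N}_k(z_1,x)$ be the kNN index set. I will split the error, uniformly in $z\in\mathcal{Z}$, into three terms:
\begin{equation*}
\hat L(z\mid X=x)-L(z\mid X=x) = \frac1k\sum_{i\in\mathcal{N}_k}\bigl(l(z;y_i)-\mu(z;(z_{1,i},x_i))\bigr) + \frac1k\sum_{i\in\mathcal{N}_k}\bigl(\mu(z;(z_{1,i},x_i))-\mu(z;(z_1,x))\bigr),
\end{equation*}
a stochastic (variance) term and a bias term. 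The third piece is the discretization of $\mathcal{Z}$ needed to make the variance bound uniform; I will show each piece tends to zero almost surely.

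\textbf{Bias.} By Assumption \ref{ass::regularity}, the density of $(Z_1,X)$ is bounded below on a compact set. Hence the $k$-th nearest neighbor radius $r_k(z_1,x)$ satisfies $\sup_{(z_1,x)} r_k \to 0$ a.s. whenever $k/N\to 0$, which holds since $\delta_{kNN}<1$. To see this, cover the space by finitely many balls of radius $\eta$; each contains $\gtrsim N\eta^{d}$ points with high probability, which exceeds $k$ eventually. Borel--Cantelli then applies through Chernoff bounds for binomial counts. Continuity of $f$ makes the conditional law of $Y$ continuous in $(z_1,x)$. Combined with equicontinuity (Assumption \ref{ass::continuous}) and boundedness of $\mathcal{Y}$, this gives uniform continuity of $\mu(z;\cdot)$, uniformly in $z$ on the compact set. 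The bias term is therefore $\sup_z \sup_{\|w'-w\|\le r_k}|\mu(z;w')-\mu(z;w)|\to0$.

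\textbf{Variance, the main obstacle.} Conditional on the covariates $(z_{1,i},x_i)_{i\le N}$, the neighbor set is fixed and the $y_i$ are independent with means $\mu(z;\cdot)$. By Assumption \ref{ass::boundedtail}, the centered terms are sub-exponential with uniform constants. A Bernstein inequality then gives, for fixed $z$, probability at most $2\exp(-c\,k\epsilon^2)$ that the average exceeds $\epsilon$. For uniformity in $z$, I will take a finite $\delta$-net $\mathcal{Z}_\delta$ of the compact $\mathcal{Z}$ with $|\mathcal{Z}_\delta|$ polynomial in $1/\delta$. Equicontinuity then bounds the deviation between any $z$ and its nearest net point by $2\epsilon$, for both $l$ and $\mu$. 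A union bound over the net gives probability at most $|\mathcal{Z}_\delta|\cdot 2\exp(-cCN^{\delta_{kNN}}\epsilon^2)$, which is summable in $N$. Borel--Cantelli therefore yields a.s. convergence. Note that $x$ is fixed in the definition, so no union over contexts is needed, but $z_1$ varies, so the neighbor set varies with $z$. I will handle this by also netting over $z_1$, or alternatively by observing that the number of distinct kNN sets as $z_1$ ranges is polynomial in $N$ (finite VC dimension of balls). A union over these polynomially many sets still leaves a summable bound, since the exponent is $N^{\delta_{kNN}}$. The condition $\delta_{kNN}>1/2$ leaves ample room here.

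Combining the three pieces gives $\sup_z|\hat L-L|\to 0$ almost surely, which is the strong uniform consistency claimed in Lemma \ref{lem::kNN}. I expect the delicate point to be the variance step. Because the neighbor set changes with $z_1$ and the $y_i$ are not independent of the selection through the covariates, the argument must condition on the covariates and count the distinct neighbor sets; the combinatorial (VC) bound is what makes this work.
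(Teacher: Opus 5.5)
Your argument is correct. It keeps the paper's overall structure but proves directly the ingredient the paper cites.

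\textbf{The paper's route.} The paper fixes the loss argument $z$. It then imports from Bertsimas and Kallus the statement that the kNN estimate of $E[l(z;Y)\mid Z_1=z_1',X=x]$ converges almost surely, uniformly over the query $(z_1',x)$. Finally it follows their Theorem EC.9 routine: equicontinuity and compactness of $\mathcal{Z}$ make the convergence uniform in $z$, and since the weights depend on $z$ only through $z_1$, one may set $z_1'=z_1$.

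\textbf{Your route.} You prove the imported ingredient yourself:
\begin{itemize}
\item the bias term, via uniform shrinkage of $r_k$ and uniform continuity of $\mu$;
\item the noise term, via a Bernstein bound conditional on the covariates;
\item uniformity in the loss argument, via a fixed net;
\item uniformity in the weight argument, via a Sauer--Shelah count of the polynomially many distinct neighbor sets;
\item almost-sure convergence, via Borel--Cantelli.
\end{itemize}

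\textbf{What each buys.} The paper's proof is short. However, it inherits the hypotheses of the cited lemma, which is evidently where $0<C<1$ and $\delta_{kNN}>1/2$ come from. It also leaves implicit that the lemma applies with the augmented covariate $(z_1,x)$. Your proof is self-contained. It makes explicit the decoupling of loss argument from neighbor-set index, which is what the decision-dependent case actually turns on; this is the same VC mechanism the paper later uses in Proposition~\ref{prop::DNConsistency}. It also shows that only $k/N\to0$ and $k/\log N\to\infty$ are needed. So any $\delta_{kNN}\in(0,1)$ suffices for this lemma, and $\delta_{kNN}>1/2$ matters only for the rates in Theorem~\ref{theo::chance-consistency}.

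\textbf{Three points to tighten.}
\begin{itemize}
\item Commit to the counting option rather than a net in $z_1$. Because $r_k\to0$, a fixed-mesh net cannot track the neighbor set: neighbor sets of nearby $z_1$ may even be disjoint.
\item Boundedness of $\mathcal{Y}$ does not bound $l$. The bias step needs $\int_{\mathcal{Y}}\sup_z|l(z;y)|\,dy<\infty$. This follows from three facts: $\sup_z|l(z;y)|\le|l(z_0;y)|+M$ by uniform equicontinuity on compact $\mathcal{Z}$; the conditional mean is finite; and the lower density bound gives $\int_{\mathcal{Y}}|l(z_0;y)|\,dy\le E[|l(z_0;Y)|\mid w]/\inf_y f(y\mid w)$.
\item Your Bernstein step needs sub-exponential control of $l(z';Y)$ given $(Z_1,X)=(z_{1i},x_i)$ with $z_{1i}\neq z_1'$. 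Assumption~\ref{ass::boundedtail}, however, ties the conditioning point to the loss argument. The two-sided density bounds in Assumption~\ref{ass::regularity} bound the ratio of conditional densities, and boundedness of $\mu$ handles the change of centering, so the moment bound transfers.
\end{itemize}
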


\begin{lemma}[CART consistency]
\label{lem::CART}
    Suppose Assumptions \ref{ass::continuous}, \ref{ass::regularity} and \ref{ass::boundedtail} hold. Suppose an honest, regular, random-split tree is constructed, with the minimum number of training examples in each leaf being $\min \{\lceil CN^{\delta_{cart}}\rceil,N-1 \}$ for some $C>0, 0<\delta_{cart}<1$. Suppose $L(z \mid X=x)$ is $L_1$-Lipschitz continuous, and the covariate $x$ is uniformly distributed random variable on $[0,1]^{d^x}$. Then $\hat{L}(z\mid X=x)$ based on \eqref{eq-CART} weight has weakly uniform consistency.
\end{lemma}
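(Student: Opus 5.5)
The plan follows the route used for honest trees in \cite{bertsimas2020predictive} and Wager--Athey. Under Assumption \ref{ass::decomp&Ign} the CART weights depend only on $(z_1,x)$. We therefore split the uniform error into a bias and a variance term:
\begin{align*}
\sup_{z}\bigl|\hat L(z\mid X=x)-L(z\mid X=x)\bigr| \le{}& \sup_{z}\Bigl|\sum_{i} w_i^{CART}(z,x)\bigl(l(z;y_i)-E[l(z;Y)\mid z_{1,i},x_i]\bigr)\Bigr| \\
&+ \sup_{z}\Bigl|\sum_{i} w_i^{CART}(z,x)E[l(z;Y)\mid z_{1,i},x_i]-L(z\mid X=x)\Bigr|.
\end{align*}
Both terms will be shown to vanish in probability.

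\textbf{Bias term.} We use the $L_1$-Lipschitz continuity of $L$. This term is at most $L_1\sup_{z}\operatorname{diam}(\mathcal{R}_{CART}(z,x))$, since only points in the leaf containing $(z_1,x)$ receive positive weight. For a regular, random-split tree whose leaves have at least $\lceil CN^{\delta_{cart}}\rceil$ points, Lemma 2 of Wager--Athey gives a high-probability bound on the diameter of every leaf. It uses a uniform covariate distribution (assumed for $x$) and a density bounded away from zero (Assumption \ref{ass::regularity}, which makes the $z_1$-marginal comparable to uniform). Along each coordinate the diameter shrinks like $(s/N)^{c\,\pi/d}$ with $s=\lceil CN^{\delta_{cart}}\rceil$ and $\delta_{cart}<1$. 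The bound holds simultaneously for all leaves, because there are at most $N/s$ of them and a union bound costs only a polynomial factor. Hence the sup over $z$ is a max over finitely many leaves, and the bias is $o_p(1)$.

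\textbf{Variance term.} Honesty makes the partition independent of the $y_i$ used for estimation. Conditionally on the partition, for fixed $z$ the term is an average of at least $s$ independent centered variables. Assumption \ref{ass::boundedtail} gives a Bernstein/Chernoff bound of the form $\exp(-c\,s\,\epsilon^2)$. For uniformity over $z$, cover $\mathcal{Z}$ (compact, Assumption \ref{ass::regularity}) by a $\delta$-net. The leaf assignment takes at most $N$ distinct values, so only the $z$-dependence of $l$ needs control. Equicontinuity (Assumption \ref{ass::continuous}) lets us pass from the net to all of $\mathcal{Z}$ with error $2\epsilon$, uniformly in $y$ and hence in the conditional means. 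A union bound over the net and the at most $N$ leaves gives a probability of order $|\text{net}|\cdot N\exp(-c N^{\delta_{cart}}\epsilon^2)\to 0$.

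The main obstacle is the bias step: the leaf-diameter bound must hold uniformly over all leaves, not only for a fixed query point. It must also be adapted from uniform features to the joint $(z_1,x)$ with only a bounded density. My first attempt would be to rescale via the density bounds $c<f<C$, which change split-mass fractions by at most a factor of $C/c$. I would then rerun the Wager--Athey counting argument, which is itself a union over splits, so it is essentially uniform already. Since only convergence in probability is claimed, a polynomial union bound is enough.
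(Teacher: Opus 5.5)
Your variance step is sound: honesty, conditional independence given the partition, a Bernstein bound over at least $s$ terms, a union over at most $N$ leaves and a finite net, and equicontinuity to pass from the net to all of $\mathcal{Z}$. It is essentially the second half of the paper's proof, which lifts a fixed-$z$ statement to uniformity in $z$ via the equicontinuity argument of Theorem EC.9 in Bertsimas--Kallus.

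\textbf{The gap is in the bias step}, specifically the claim that the Wager--Athey leaf-diameter bound holds for all leaves at once by a union bound. Lemma 2 of Wager--Athey concerns the leaf containing one fixed point. Its failure probability comes from a Chernoff bound on the number of splits along coordinate $j$ on a single root-to-leaf path, and that path has length only of order $\log(N/s)$. The failure probability is therefore only of order $(N/s)^{-\eta^2\pi/(2d\log(1/\alpha))}$. Since $\alpha\le 1/2$, the exponent is below $1/(2d\log 2)<1$. Multiplying by your count of $N/s$ leaves, or by a grid of query points of comparable size (the leaves themselves are random), gives $(N/s)^{1-\eta^2\pi/(2d\log(1/\alpha))}\to\infty$. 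A polynomial union bound is exactly what fails here, because each event's probability is itself only polynomially small with a small exponent. Nor is the Wager--Athey argument ``essentially uniform already'': the conversion from point fractions to side lengths (concentration over boxes) is uniform, but the split-count step is per path.

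This is not a looseness that a sharper count would repair. Take an honest, regular tree that splits at the median, on a coordinate drawn uniformly and independently at each node; this is random-split with $\pi=1$. Fix $x$ and an $x$-coordinate $j$. Consider the nodes whose cell meets $\mathcal{Z}_1\times\{x\}$ and whose path contains no split on $j$. Such a node has $0$ such children if it splits on $j$, one if it splits on another $x$-coordinate, and two if it splits on a $z_1$-coordinate. These nodes therefore form a Galton--Watson tree with mean offspring $1+(d^{z_1}-1)/d$. For $d^{z_1}\ge 2$ this tree survives with a probability $\rho>0$ that does not depend on $N$. On that event some leaf met by the slice is never cut along $x_j$, so $\sup_z \mathrm{diam}(\mathcal{R}_{CART}(z,x))$ does not go to zero. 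For suitable $x$ the bias then does not vanish whenever $E[l(z;Y)\mid Z_1,X]$ varies with $x_j$.

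Uniform leaf shrinkage therefore needs either an extra structural condition on the tree (for example, every coordinate is split a growing number of times on every path) or an externally supplied uniform-in-query result. The paper takes the second option and derives nothing from Wager--Athey. For each fixed $z$, it imports $\sup_{x,z_1'}\bigl|\sum_i w_i(z_1',x)\,l(z;y_i)-E[l(z;Y)\mid Z_1=z_1',X=x]\bigr|\to 0$ in probability from Lemma 7 of Bertsimas--McCord. In the paper, uniformity over the query point thus rests entirely on that citation.

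A smaller point: your bound by $L_1\,\mathrm{diam}$ needs Lipschitz continuity in the conditioning variables $(z_1',x')$ with the loss argument $z$ held fixed. If the hypothesis is read as joint Lipschitz continuity of $L$ in $(z,x)$, you need one more equicontinuity step to decouple the two arguments.
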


\begin{lemma}[LSA consistency]
\label{lem::LSA}
    Suppose Assumptions \ref{ass::continuous} and \ref{ass::regularity} hold. And $E[|l(z;Y)\max\{log|(l(z;Y))|, 0\}|] <\infty$. For some $C>0, 0<\delta_{LSA}<1/(2d+1)$ where $d=d^{z_1}+d^{x}$, let $h_N=CN^{-\delta_{LSA}}$, then $\hat{L}(z\mid X=x)$ based on \eqref{eq-LSA} weight has strongly uniform consistency.
\end{lemma}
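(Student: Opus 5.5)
The plan has three steps: reduce the uncertainty-dependent part to a sup over $z_1$, control it by a bias/variance split, and then add back $z_2$ using equicontinuity.

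\textbf{Step 1: reduction to $z_1$.} By Assumption \ref{ass::decomp&Ign}, the weights $w_i^{LSA}$ depend only on $(z_1,x)$. Moreover, $Y(z)$ is conditionally independent of $Z$ given $X$. Hence
\begin{equation*}
L(z\mid X=x)=E[l(z;Y)\mid X=x,Z_1=z_1].
\end{equation*}
So $L$ is the conditional expectation, under the joint law $f$, of $l(z;Y)$ given $(Z_1,X)=(z_1,x)$, evaluated at a point of the $d$-dimensional covariate $(z_1,x)$. The estimator $\hat L(z\mid X=x)$ is exactly a uniform-kernel (naive) Nadaraya--Watson estimator of this regression function with bandwidth $h_N$. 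I would fix a finite $\eta$-net $\{z^{(1)},\dots,z^{(M)}\}$ of the compact set $\mathcal{Z}$. By Assumption \ref{ass::continuous} (equicontinuity), for every $z$ within $\delta(\eta)$ of a net point $z^{(m)}$ and every $y$, we have $|l(z;y)-l(z^{(m)};y)|\le\eta$. Since the weights sum to one, both $L$ and $\hat L$ change by at most $\eta$ when the loss argument $z$ is moved to the net point, keeping the weight argument $z_1$ fixed. It therefore suffices to show, for each fixed loss index $m$,
\begin{equation*}
\sup_{z_1\in\mathcal{Z}_1}\Bigl|\sum_i w_i^{LSA}(z_1,x)\,l(z^{(m)};y_i)-E[l(z^{(m)};Y)\mid x,z_1]\Bigr|\to 0 \quad\text{a.s.}
\end{equation*}
Afterwards we let $\eta\downarrow 0$ along a countable sequence.

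\textbf{Step 2: uniform consistency for a fixed loss.} Write the estimator as $\hat m_N/\hat f_N$, where
\begin{equation*}
\hat m_N=\frac{1}{Nh_N^d}\sum_i \mathbb{I}\{\|(z_{1i},x_i)-(z_1,x)\|\le h_N\}\,l(z^{(m)};y_i),
\end{equation*}
and $\hat f_N$ is the same sum without the factor $l$. I would split the error into a bias term and a stochastic term.

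\emph{Bias.} Assumption \ref{ass::regularity} makes $f(z_1,x)$ continuous and bounded away from zero on a compact set. Together with continuity of $E[l\mid\cdot]$ (inherited from continuity of $f(z,x,y)$), this gives $E\hat m_N/E\hat f_N\to m$ uniformly as $h_N\to0$.

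\emph{Stochastic term.} This is where the $N^{-\delta_{LSA}}$ rate and the moment condition enter. I would truncate $l$ at level $\tau_N=\gamma\log N$. The condition $E|l\log^+|l||<\infty$ is precisely the moment condition in the classical uniform strong consistency results for the naive kernel regression estimator (Devroye/Mack--Silverman type). It makes the truncated-away part negligible almost surely via Borel--Cantelli, since $\sum_N P(|l|>\tau_N \text{ for some } i\le N)$ is controlled by that moment.

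For the truncated, bounded part I would follow the VC route emphasized in the paper. The class of balls $\{\mathbb{I}\{\|u-(z_1,x)\|\le h\}\}$ has finite VC dimension. Bernstein's inequality, applied with a union bound over a polynomial-size grid of centers (or a VC uniform deviation bound), gives
\begin{equation*}
\sup_{z_1}|\hat m_N-E\hat m_N|=O\!\left(\tau_N\sqrt{\log N/(Nh_N^d)}\right)
\end{equation*}
almost surely, and the same bound holds for $\hat f_N$ without the factor $\tau_N$. The choice $h_N=CN^{-\delta_{LSA}}$ with $\delta_{LSA}<1/(2d+1)<1/d$ gives $Nh_N^d\ge N^{1-d/(2d+1)}$, which is polynomially large. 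The logarithmic factors therefore vanish, and summability in $N$ gives almost-sure convergence by Borel--Cantelli. The constraint $\delta_{LSA}<1/(2d+1)$ is stronger than needed here, which is harmless. Finally, since $\hat f_N\to f\ge c>0$ uniformly, the ratio inherits uniform convergence.

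\textbf{Main obstacle.} The hardest step is the stochastic term under only the $L\log L$ moment, without the bounded tails of Assumption \ref{ass::boundedtail}. Choosing the truncation level so that both the Bernstein bound and the Borel--Cantelli tail control are summable is delicate. If the direct argument becomes messy, I would instead invoke the known uniform strong consistency theorem for naive kernel regression estimates under $E|Y\log^+|Y||<\infty$, applied to each fixed $l(z^{(m)};\cdot)$, and combine it with Step 1.
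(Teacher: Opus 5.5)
Your Step~1 is sound. It is the same reduction the paper's appendix uses for the kNN and CART lemmas: a covariate-uniform statement for each fixed loss, followed by the equicontinuity argument of Theorem EC.9 in Bertsimas and Kallus. The paper gives no separate argument for LSA.

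The gap is in the stochastic term of Step~2. Truncating at $\tau_N=\gamma\log N$ requires the event $\{\max_{i\le N}|l(z^{(m)};y_i)|>\gamma\log N\}$ to occur only finitely often. That fails under the stated moment condition. The $y_N$ are i.i.d., so by the second Borel--Cantelli lemma $|l(z^{(m)};y_N)|>\gamma\log N$ happens infinitely often almost surely unless $\sum_N P(|l|>\gamma\log N)<\infty$. That sum is finite only if $E\,e^{|l|/\gamma}<\infty$. This is an exponential moment, essentially Assumption~\ref{ass::boundedtail}, which this lemma deliberately omits. An $L\log L$ moment only gives $P(|l|>t)=o(1/(t\log t))$.

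Your fallback theorem is not available either. The condition $E|l\log^+|l||<\infty$ appears in strong consistency results for the naive kernel estimate at almost every \emph{fixed} covariate value, which is how Bertsimas and Kallus use it. Uniform results of Devroye or Mack--Silverman type need bounded responses or higher moments. Uniformity over the $z_1$-coordinate of the covariate is exactly what your Step~1 requires.

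No choice of truncation can repair this, because under the stated hypotheses the conclusion is false whenever $d^{z_1}\ge 1$. Here is a counterexample.
\begin{itemize}
\item Let $l(z;y)=g(y)$ be independent of $z$, and let $Y$ be independent of $(Z,X)$, with all densities uniform.
\item Let $g\ge 0$ be unbounded on the compact $\mathcal{Y}$ (no continuity in $y$ is assumed), with $P(g(Y)>t)\asymp 1/(t\log^3 t)$. Then $E[g\log^+ g]<\infty$ and $L\equiv E[g(Y)]$.
\item About $Nh_N^{d^x}$ samples satisfy $\|x_i-x\|\le h_N/2$. The largest $g(y_i)$ among them is of order $Nh_N^{d^x}/\log^3 N$.
\item For that $i$, the ball of radius $h_N$ centred at $(z_{1i},x)$ contains the sample, yet by the VC bound it contains only $O(Nh_N^{d})$ points.
\end{itemize}
Hence $\sup_z\hat L(z\mid X=x)$ is at least of order $h_N^{-d^{z_1}}/\log^3 N\to\infty$ in probability, so even weak uniform consistency fails.

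The lemma therefore needs a stronger tail hypothesis. Under an exponential-tail condition like Assumption~\ref{ass::boundedtail} (taken uniformly over the conditioning point), your plan goes through essentially as written. A union bound gives $P(\max_{i\le N}|l(z^{(m)};y_i)-E[l(z^{(m)};Y)\mid z_{1i},x_i]|>\gamma\log N)\le CN^{1-\lambda\gamma}$, which is summable for $\lambda\gamma>2$. The bounded truncated part is then controlled by the VC inequality, and the bias and denominator arguments are as you describe. If $l$ is bounded, no truncation is needed at all.
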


% Lemma~\ref{lem::kNN} and Lemma~\ref{lem::CART} are derived from \cite{bertsimas2019predictions} with some modifications to account for decision-dependent uncertainty, while Lemma~\ref{lem::LSA} follows directly from \cite{bertsimas2020predictive}.
These lemmas demonstrate asymptotic consistency of $\hat{L}(\cdot\mid\cdot)$ for continuous loss functions that meets the requirement of Assumptions \ref{ass::continuous} and \ref{ass::regularity}, but their direct application to the chance constraints is hindered by the discontinuity of the indicator function $\mathbb{I}\{\psi_{\xi}(z;y)\le0\}$.

% With the above Lemmas, decision-makers can use these three weights to construct the wSAA approximation of the objective function. However, since $\mathbb{I}(\psi_{\xi}(z;y) \le 0)$ is not equicontinuous in $z$, Assumption \ref{ass::continuous} fails when the above weights are directly applied to probability approximation, namely $g_{\xi}(z\mid X=x) = E_{Y\sim f(\cdot \mid z, x)} \left[\mathbb{I}(\psi_{\xi}(z;y) \le 0)\right]$.

To state the consistency of $\hat{g}(z|X=x)$, which equals
\[ \sum_{i=1}^N w_i(z,x) \mathbb{I}\left\{\psi_{\xi}(z,y_i) \le 0\right\}=\frac{\sum_{i=1}^N \mathbb{I}\left\{(z_i, x_i) \in \mathcal{C}(z, x)\right\} \cdot\mathbb{I}\{\psi_{\xi}(z, y_i)\le 0\}}{\sum_{i=1}^N \mathbb{I}\left\{(z_i, x_i) \in \mathcal{C}(z, x)\right\}},\] 
we firstly analyze the consistency of the numerator and denominator. The denominator function and its approximation are defined as follows, respectively.
\begin{equation}
    \begin{aligned}
        U(z,x)&=E[\mathbb{I}\left\{(Z, X) \in \mathcal{C}(z, x)\right\}]=P\left((Z, X) \in \mathcal{C}(z, x)\right),\\
        \hat{U}(z,x) &= \frac{1}{N}\sum_{i=1}^N \mathbb{I}\left\{(z_i, x_i) \in \mathcal{C}(z, x)\right\}.   
    \end{aligned}
\end{equation}

And the numerator function and its approximation are respectively defined below, for all $\xi \in \{1,\cdots, \Xi\}$,
\begin{equation}
    \begin{aligned}
        V_{\xi}(z,x)&=P\!\left[((Z, X)\! \in\! \mathcal{C}(z, x))\&(\psi_{\xi}(z,Y) \!\le\! 0)\right],\\
        \hat{V}_{\xi}(z,x) &= \frac{1}{N}\sum_{i=1}^N \mathbb{I}\left\{(z_i, x_i) \in \mathcal{C}(z, x)\right\} \cdot\mathbb{I}\{\psi_{\xi}(z, y_i)\le 0\}.
    \end{aligned}
\end{equation}
% When adding a specific superscript to the function name, we are referring to this specific type of approximation.

The parts within the summation symbol in functions $\hat{V}_{\xi}$ and $\hat{U}$ can be regarded as indicators determined by $(z, x)$. This enables us to discuss the consistency of the numerator and denominator separately using the Vapnik-Chervonenkis (VC) Theorem. We recall this theorem in E-Companion \ref{app::Proof}.

\begin{proposition}[Consistency of the denominator and numerator]
\label{prop::DNConsistency}
    When the approximation is defined as in Lemmas \ref{lem::kNN} and \ref{lem::LSA}, we have for kNN and LSA, the indicator function class $\mathcal{I} = \left\{ \mathbb{I}_{(z,x)} \mid \mathbb{I}_{(z,x)}(Z,X) = \mathbb{I}\{(Z, X) \in \mathcal{C}(z, x)\}, (z,x) \in \mathcal{Z} \times \mathcal{X} \right\}$ has finite VC-dimension, and for all $\xi \in \{1,\cdots, \Xi\}$,
\begin{align}
\sup_{z\in\mathcal{Z}, x\in\mathcal{X}}\left|\hat{U}^{kNN}(z,x) -U^{kNN}(z,x)\right| &= O_p\left(\sqrt{\tfrac{\log N}{N}}\right),\\
\sup_{z\in\mathcal{Z}, x\in\mathcal{X}}\left|\hat{V}_{\xi}^{kNN}(z,x) -V_{\xi}^{kNN}(z,x)\right|&= O_p\left(\sqrt{\tfrac{\log N}{N}}\right),\\
\sup_{z\in\mathcal{Z}, x\in\mathcal{X}}\left|\hat{U}^{LSA}(z,x) -U^{LSA}(z,x)\right| &= O_p\left(\sqrt{\tfrac{\log N}{Nh_N}}\right),\\
\sup_{z\in\mathcal{Z}, x\in\mathcal{X}}\left|\hat{V}_{\xi}^{LSA}(z,x) -V_{\xi}^{LSA}(z,x)\right|&= O_p\left(\sqrt{\tfrac{\log N}{Nh_N}}\right).
\end{align}
\end{proposition}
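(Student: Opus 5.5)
The plan is to treat $\hat U$ and $\hat V_\xi$ as empirical measures of sets indexed by $(z,x)$, and to obtain both uniform rates from a VC-type uniform deviation inequality. The argument has three steps.

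\textbf{Step 1: identify the set classes.}
For LSA, $\mathcal{C}(z,x)$ is (after Assumption \ref{ass::decomp&Ign}) a closed ball of radius $h_N$ in the $(z_1,x)$-space of dimension $d=d^{z_1}+d^x$. Balls in $\mathbb{R}^d$ under the Euclidean distance form a class of VC-dimension at most $d+2$. This follows because each ball is a positivity set of a function in the $(d+2)$-dimensional linear span of $\{1,u_1,\dots,u_d,\|u\|^2\}$.

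For kNN the neighbourhood is data-dependent, and this is the main obstacle. With a continuous density, ties occur with probability zero. The kNN set of a query $(z,x)$ then equals a ball centred at the query, with radius equal to the $k$-th order distance. Hence
\[
\mathcal{C}^{kNN}(z,x)\in\mathcal{B}=\{B(u,r):u\in\mathcal{Z}_1\times\mathcal{X},\,r\ge0\}.
\]
I will handle the data dependence by proving the uniform deviation bound over the whole fixed class $\mathcal{B}$, indexed by centre and radius. That bound holds in particular for the random radius. Here $U^{kNN}(z,x)$ is interpreted as $P((Z,X)\in B(u,r))$ evaluated at the realized radius.

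For the numerator, I need the class of sets
\[
\{(u,y): u\in B,\ \psi_\xi(z,y)\le 0\},\qquad B\in\mathcal{B},\ z\in\mathcal{Z}.
\]
This is the intersection of the ball class with the sublevel class $\{y:\psi_\xi(z,y)\le0\}_z$. The VC-dimension of an intersection class is bounded by $O(v_1+v_2)$ (Dudley). So finiteness reduces to finiteness of the VC-dimension of the sublevel sets of $\psi_\xi$. For the polynomial or piecewise-linear $\psi_\xi$ used later, such as newsvendor constraints, this holds by the same linear-span argument. In general I would impose it as part of the implicit assumption of the proposition, which only asserts finiteness.

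\textbf{Step 2: the kNN rate.}
With finite VC-dimension $v$, the VC inequality (recalled in E-Companion \ref{app::Proof}) gives
\[
P\Bigl(\sup_{A}|P_N(A)-P(A)|>t\Bigr)\le 8(N+1)^v e^{-Nt^2/32}.
\]
Choosing $t=c\sqrt{\log N/N}$ with $c$ large makes the right-hand side summable, which yields $O_p(\sqrt{\log N/N})$ for both $\hat U^{kNN}$ and $\hat V^{kNN}_\xi$. In fact the bound holds almost surely by Borel--Cantelli.

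\textbf{Step 3: the LSA rate.}
For LSA the sets have small probability, $P(A)\asymp h_N^{d}$ by Assumption \ref{ass::regularity}, since the density is bounded above and below. The plain VC bound is therefore too crude. I would use a variance-sensitive (relative deviation / Talagrand-type) VC inequality. For classes with $\sup_A P(A)\le\sigma^2$ it gives
\[
\sup_A|P_N(A)-P(A)| = O_p\Bigl(\sqrt{\sigma^2\log N/N}+\log N/N\Bigr).
\]
This can be obtained, for example, from Giné--Guillou's bound for VC classes. With $\sigma^2\lesssim h_N^{d}\le h_N$ for $h_N\le1$, and $Nh_N/\log N\to\infty$ because $\delta_{LSA}<1/(2d+1)<1$, both terms are $O(\sqrt{\log N/(Nh_N)})$. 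The same argument applies to $\hat V^{LSA}_\xi$, since its sets are subsets of the balls and so have probability at most $\sigma^2$.

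The main expected difficulty is the kNN data-dependent neighbourhood together with the VC control of the $\psi_\xi$-sublevel sets. The first is resolved by taking the supremum over all radii. The second needs a structural condition on $\psi_\xi$, which I would state explicitly.
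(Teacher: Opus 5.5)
Your Steps 1--2 follow the paper's route. The paper also observes that LSA and kNN clusters are Euclidean balls with VC dimension $d^{z_1}+d^x+1$. It plugs a threshold of order $\sqrt{\log N/N}$ into the VC inequality, and it handles $\hat V_\xi$ by declaring the product classes $F^{LSA},F^{kNN}$ to be VC as well. Your version is more careful on three counts.

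\textbf{kNN.} The paper applies the VC inequality to the data-dependent class $K^{kNN}_{z,x,k,N}$ as if it were fixed. Embedding it in the fixed class of all balls (all centres and radii) is what actually justifies that step. Strictly, ties are not a null event uniformly over the continuum of query points. This is harmless: any tie rule sandwiches the cluster between the open and the closed ball of the same radius, both are VC classes, and the separating sphere is $P$-null.

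\textbf{Numerator.} The paper simply asserts that the product classes have VC dimension below $d^{z_1}+d^x+1$. That does not follow from the standing assumptions. For example, $\psi_\xi(z,y)=y\sin(z_2/y)$ (set to $0$ at $y=0$) on $\mathcal Z_2=[1,2]$, $\mathcal Y=[0,1]$ is $1$-Lipschitz in $z$. Yet by Kronecker's theorem its $y$-sublevel sets shatter arbitrarily large finite sets. So the explicit VC condition you impose on $\{y:\psi_\xi(z,y)\le 0\}_{z}$ is genuinely needed for this route; it is not a weakness of your proposal.

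\textbf{Almost-sure version.} Your Borel--Cantelli remark supplies something the paper uses but never proves. Its proof of Proposition~\ref{prop::Frac Consistency} invokes $\sup|\hat U-U|\le\epsilon_N$ almost surely, whereas only an $O_p$ bound was established for this proposition.

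Step 3, however, rests on a mistaken premise. Once $h_N\le 1$, the target $\sqrt{\log N/(Nh_N)}$ is larger than $\sqrt{\log N/N}$. Your Step 2 argument, applied to the subclass of radius-$h_N$ balls, therefore already yields the LSA claims. This is exactly what the paper does: it takes $\epsilon_N=M\sqrt{\log N/(Nh_N)}$ in the plain VC inequality.

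The variance-sensitive (Gin\'e--Guillou) bound is thus unnecessary here, but it proves something strictly stronger, namely $O_p(\sqrt{h_N^{d}\log N/N})$. That pays off downstream. Dividing by $\hat U^{LSA}\gtrsim h_N^{d}$ in Proposition~\ref{prop::Frac Consistency} would give the rate $\sqrt{\log N/(Nh_N^{d})}$ instead of the paper's $\sqrt{\log N/(Nh_N^{2d+1})}$. This relaxes the bandwidth condition $\delta_{LSA}(2d+1)<1$ to $\delta_{LSA}d<1$. The relative-deviation form you mention would similarly sharpen the kNN fraction rate to $\sqrt{\log N/k}$.
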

% We recall the definition of VC theorem in Appendix E.

Proposition \ref{prop::DNConsistency} states that when using kNN and LSA in $\hat{g}_{\xi}$, both  $\hat{V}_{\xi}$ and $\hat{U}$ converge to their actual counterparts $V_{\xi}$ and $U$, respectively. Next, we present the uniform consistency property of the fraction $V_{\xi}/U$ in Proposition \ref{prop::Frac Consistency}.

\begin{proposition}[Consistency of the fraction]
\label{prop::Frac Consistency}
    When the approximation is defined as the way Lemmas \ref{lem::kNN} and \ref{lem::LSA} mention, we have for kNN and LSA, for all $\xi \in \{1,\cdots, \Xi\}$,
    \begin{equation}
    \sup_{z\in\mathcal{Z},x\in\mathcal{X}}\left|\frac{\hat{V}_{\xi}(z,x)}{\hat{U}(z,x)} - \frac{V_{\xi}(z,x)}{U(z,x)}\right| \xrightarrow[N\to\infty]{a.s.} 0.
    \end{equation}
\end{proposition}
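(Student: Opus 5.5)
The plan is to reduce the statement to Proposition \ref{prop::DNConsistency} through an elementary bound on the error of a ratio, combined with a uniform lower bound on the denominator $U(z,x)$. For any $(z,x)$ with $\hat U(z,x)>0$ and $U(z,x)>0$, add and subtract $V_\xi/\hat U$ to obtain
\[
\left|\frac{\hat V_\xi}{\hat U}-\frac{V_\xi}{U}\right|
\le \frac{|\hat V_\xi-V_\xi|}{\hat U}+\frac{V_\xi}{U}\cdot\frac{|\hat U-U|}{\hat U}
\le \frac{|\hat V_\xi-V_\xi|+|\hat U-U|}{\hat U},
\]
where the last step uses $0\le V_\xi\le U$, which holds because the event defining $V_\xi$ is contained in the event defining $U$. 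The task then reduces to three steps: showing that $\sup|\hat V_\xi-V_\xi|$ and $\sup|\hat U-U|$ are uniformly small, showing that $\inf_{(z,x)}\hat U(z,x)$ is bounded below by something of the same order as $\inf U$, and checking that the ratio of the first to the second vanishes almost surely.

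\emph{Lower bound on the denominator.} For kNN, $\hat U(z,x)=k/N$ exactly, because the cluster always contains $k$ points, ties being broken so that this holds. Taking $U$ to be the population mass of the corresponding ball, uniform convergence of $\hat U$ to $U$ gives $U=k/N+o(k/N)$ uniformly. For LSA, Assumption \ref{ass::regularity}(3) gives $f(z,x)\ge c'>0$ on the compact domain. The ball of radius $h_N$ intersected with the domain has volume at least $c''h_N^{d}$ uniformly over $(z,x)$, by a cone condition at the boundary of the compact domain. Hence $\inf U\ge c\,h_N^{d}$, and on the event $\sup|\hat U-U|\le \tfrac12\inf U$ we get $\inf\hat U\ge \tfrac c2 h_N^d$.

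\emph{Rates.} Proposition \ref{prop::DNConsistency} gives only $O_p$ rates, whereas the claim is almost sure. I would therefore return to the VC inequality recalled in E-Companion \ref{app::Proof}. Because the class $\mathcal I$, and its product with $\{\mathbb I\{\psi_\xi(z,\cdot)\le0\}\}$, has finite VC dimension, we have
\[
P\Bigl(\sup|\hat U-U|>t\Bigr)\le 8 (N+1)^{v}e^{-Nt^2/32}.
\]
The numerator needs this product class to have finite VC dimension. For that I will assume, or argue via the equicontinuity of Assumption \ref{ass::continuous}, that the sublevel sets of $\psi_\xi$ form a VC class; this is an implicit hypothesis I would need to make explicit. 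Choose $t_N=\epsilon\,\inf U$, which is $\epsilon k/N=\epsilon C N^{\delta_{kNN}-1}$ for kNN and $\epsilon c h_N^d$ for LSA. The bound is summable in $N$ provided $N t_N^2/\log N\to\infty$. For kNN this reads $N^{2\delta_{kNN}-1}/\log N\to\infty$, which holds exactly because $\delta_{kNN}>1/2$. For LSA it reads $N^{1-2d\delta_{LSA}}/\log N\to\infty$, which holds because $\delta_{LSA}<1/(2d+1)<1/(2d)$. Borel--Cantelli then gives, almost surely for all large $N$, $\sup|\hat U-U|+\sup|\hat V_\xi-V_\xi|\le 2\epsilon\inf U$. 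Combined with $\hat U\ge(1-\epsilon)\inf U$, the ratio bound is at most $2\epsilon/(1-\epsilon)$, and letting $\epsilon\downarrow0$ along a countable sequence gives the almost sure conclusion for each $\xi$, hence for all finitely many $\xi$.

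\emph{Main obstacle.} The delicate step is the relative (rather than absolute) uniform control, since $U$ itself shrinks to zero. The plain $O_p(\sqrt{\log N/N})$ rate must be beaten by the shrinking denominator, and this is exactly where the exponents $\delta_{kNN}>1/2$ and $\delta_{LSA}<1/(2d+1)$ enter. A second subtlety is that for kNN the set $\mathcal C(z,x)$ depends on the sample, so $U$ must be defined through the population ball of the data-dependent radius. To handle this I would instead apply the VC bound to the fixed class of all balls, in which $\mathcal C(z,x)$ is always a member, so the data dependence is absorbed by uniformity over that class.
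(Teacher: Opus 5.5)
Your argument is correct and has the same skeleton as the paper's proof. Both use the same add-and-subtract bound $|\hat V_\xi/\hat U - V_\xi/U|\le(|\hat V_\xi-V_\xi|+|\hat U-U|)/\hat U$, justified by $0\le V_\xi\le U$. Both use the same denominator control: $\hat U=k/N$ for kNN and $\inf U\ge c\,h_N^d$ for LSA.

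The real difference is how you reach the almost sure conclusion. The paper divides the $O_p$ rates of Proposition~\ref{prop::DNConsistency} by the denominator bound and then declares almost sure convergence. It even reads $\sup|\hat U-U|\le\epsilon_N$ ``almost surely'' off an $O_p$ statement. As written, that yields only convergence in probability. Your route closes this gap: you return to the VC tail bound, pick a deterministic threshold $t_N$ proportional to the denominator scale, check summability via $Nt_N^2/\log N\to\infty$, and apply Borel--Cantelli along $\epsilon=1/m$. This also shows exactly where $\delta_{kNN}>1/2$ and the LSA exponent are used. For kNN, take $t_N=\epsilon k/N$ outright rather than $\epsilon\inf U$, since $\inf U$ is random there and $\hat U=k/N$ is all you need.

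For LSA you use the plain $\sqrt{\log N/N}$ VC deviation rather than the looser $\sqrt{\log N/(Nh_N)}$ of Proposition~\ref{prop::DNConsistency}. You therefore need only $2d\,\delta_{LSA}<1$, which the paper's hypothesis implies. Two further points the paper uses without saying so are made explicit in your version: the data-dependent kNN ball is handled through the fixed class of all balls, and a boundary (cone) condition is needed behind $\inf U\ge c\,h_N^d$.

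One caution on the numerator class. State the VC property of $\{y:\psi_\xi(z,y)\le 0\}_{z\in\mathcal Z}$ as an explicit hypothesis; do not try to derive it from Assumption~\ref{ass::continuous}, because equicontinuity does not imply it. For example, $\psi(z,y)=z\sin(y/z)$ with $\psi(0,y)=0$ is jointly continuous on $[0,1]^2$, hence equicontinuous in $z$. Yet for $z>0$ its sublevel sets are the sine classifiers $\{y:\sin(y/z)\le 0\}$ with unbounded frequency, which have infinite VC dimension. The paper's claim that the product classes have VC dimension at most $d^{z_1}+d^x+1$ rests on the same unstated assumption.
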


With proposition \ref{prop::Frac Consistency}, Theorem \ref{theo::chance-consistency} is obtained.

\begin{theorem}[Consistency of estimating chance constraint with CCW]
\label{theo::chance-consistency}
    When the approximation is defined as the way Lemmas \ref{lem::kNN} to \ref{lem::LSA} mention, we have that $\forall$$\xi \in \{1,\cdots, \Xi\}$, $\hat{g}_{\xi}(z\mid X=x)$ with kNN and LSA weight has strongly uniform consistency, while $\hat{g}^{CART}_{\xi}(z\mid X=x)$ has weakly uniform consistency. And,
%    \begin{equation}
    \begin{align}
        \sup_{z\in\mathcal{Z}, x\in\mathcal{X}} \left|g_{\xi}(z\mid X=x) - \hat{g}_{\xi}^{kNN}(z\mid X=x)\right| 
        &= O_P\left(\sqrt{\frac{\log N}{N^{2\delta_{kNN}-1}}}\right),
        \quad0<2\delta_{kNN}-1<1,\\
        \sup_{z\in\mathcal{Z}, x\in\mathcal{X}} \left|g_{\xi}(z\mid X=x) - \hat{g}_{\xi}^{LSA}(z\mid X=x)\right|
        &= O_P\left(\sqrt{\frac{\log N}{N^{1-\delta_{LSA}(2d+1)}}}\right),\,0<\delta_{LSA}(2d+1)<1.
    \end{align}
 %   \end{equation}
\end{theorem}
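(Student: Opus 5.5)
We will prove Theorem \ref{theo::chance-consistency} by splitting the error into a stochastic term and a bias term:
\begin{equation*}
\bigl|g_{\xi}(z\mid X=x)-\hat g_{\xi}(z\mid X=x)\bigr|\le \Bigl|\frac{\hat V_{\xi}(z,x)}{\hat U(z,x)}-\frac{V_{\xi}(z,x)}{U(z,x)}\Bigr| + \Bigl|\frac{V_{\xi}(z,x)}{U(z,x)}-g_{\xi}(z\mid X=x)\Bigr|.
\end{equation*}
The identity $\hat g_{\xi}=\hat V_{\xi}/\hat U$ holds by the definition of CCW weights.

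\textbf{Stochastic term.} Proposition \ref{prop::Frac Consistency} already gives almost sure uniform convergence of the first term for kNN and LSA. To obtain rates we sharpen its proof using the elementary bound
\begin{equation*}
\Bigl|\frac{\hat V}{\hat U}-\frac{V}{U}\Bigr|\le \frac{|\hat V-V|}{\hat U}+\frac{V}{U}\cdot\frac{|\hat U-U|}{\hat U}\le \frac{|\hat V-V|+|\hat U-U|}{\hat U}.
\end{equation*}
We then lower bound $U$ uniformly over $(z,x)$.

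For kNN, $\hat U=k/N\asymp N^{\delta_{kNN}-1}$ deterministically. Combined with Proposition \ref{prop::DNConsistency}, this gives
\begin{equation*}
O_p\bigl(\sqrt{\log N/N}\bigr)\big/N^{\delta_{kNN}-1}=O_p\bigl(\sqrt{\log N/N^{2\delta_{kNN}-1}}\bigr).
\end{equation*}

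For LSA, the density lower bound $c$ in Assumption \ref{ass::regularity} and compactness give $U^{LSA}(z,x)\ge c' h_N^{d}$ uniformly, with a boundary constant. Since $\hat U\ge U-o_p(U)$ for $\delta_{LSA}(2d+1)<1$, the ratio is $O_p\bigl(\sqrt{\log N/(Nh_N)}\,h_N^{-d}\bigr)$. Substituting $h_N=CN^{-\delta_{LSA}}$ yields $\sqrt{\log N/N^{1-\delta_{LSA}(2d+1)}}$. The condition $\delta_{LSA}(2d+1)<1$ is exactly what makes this vanish and what makes the denominator bound valid. For a.s.\ (strong) consistency we use the exponential VC tail bound behind Proposition \ref{prop::DNConsistency} together with Borel--Cantelli.

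\textbf{Bias term.} This is the main obstacle. By Assumption \ref{ass::decomp&Ign} (ignorability), $V_{\xi}/U=\mathbb{P}\bigl(\psi_{\xi}(z,Y)\le 0\mid (Z,X)\in\mathcal{C}(z,x)\bigr)$ is an average of $g_{\xi}(z\mid X=x')$ evaluated at $(z_1',x')$ over the cluster. For LSA the cluster diameter is $h_N\to0$. For kNN the $k/N\to0$ mass neighborhood has uniformly vanishing radius, of order $(k/N)^{1/d}$, since the density is bounded below.

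Uniform consistency therefore requires uniform continuity of $(z_1',x')\mapsto f(y\mid z_1',x')$. This follows from the continuous, bounded-below joint density on a compact set in Assumption \ref{ass::regularity}, via Scheffé-type $L_1$ continuity of conditional densities. Since the indicator is bounded by one, the bias is bounded uniformly by $\sup\|f(\cdot\mid z_1',x')-f(\cdot\mid z_1,x)\|_{L_1}$ over the shrinking cluster, which tends to $0$.

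Note that the indicator's discontinuity in $z$ does not enter here, because $z$ itself is held fixed inside $\psi_{\xi}$ and only the conditioning variables move. To claim the bias does not dominate the stated rates, we would use the twice-differentiability of $f(z,x)$, which gives bias $O(h_N)$ or $O((k/N)^{1/d})$. We would argue that these are of smaller or equal order under the stated exponent ranges, and otherwise interpret the rates as the stochastic rates.

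\textbf{CART.} Lemma \ref{lem::CART}'s conditions give weak consistency. We apply the same decomposition, with leaves of size at least $CN^{\delta_{cart}}$, shrinking leaf diameter from honest random-split regularity, and a VC bound for axis-aligned boxes. This yields convergence in probability only, i.e.\ weakly uniform consistency.
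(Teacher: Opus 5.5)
For kNN and LSA your argument is essentially the paper's. You use the same split into the stochastic term $|\hat V_\xi/\hat U-V_\xi/U|$ and the cluster bias $|V_\xi/U-g_\xi|$, the same bound $(|\hat V_\xi-V_\xi|+|\hat U-U|)/\hat U$ with $\hat U=k/N$ or $U\ge c_4h_N^d$, and the same shrinking-sphere bias step.

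Where you deviate, you are more careful than the paper, in two places:
\begin{itemize}
\item Borel--Cantelli with the exponential VC tail is what actually yields almost-sure convergence. The tails at levels $\epsilon k/N$ and $\epsilon h_N^d$ behave like a polynomial times $\exp(-cN^{2\delta_{kNN}-1})$ and $\exp(-cN^{1-2d\delta_{LSA}})$, so they are summable. The paper passes from $O_p$ bounds to a.s.\ statements without argument.
\item Your uniform $L_1$ modulus of continuity for $f(\cdot\mid z_1',x')$, together with a uniform kNN-radius bound, controls the bias in the supremum. Lemma~\ref{lem::limitation-r} and the nearest-neighbour radius result the paper cites are pointwise in $(z,x)$.
\end{itemize}

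The step that does not go through is the rate. First, the twice-differentiability in Assumption~\ref{ass::regularity} concerns the marginal density $f(z,x)$ of $(Z,X)$, which only controls $U$. The bias is governed by the modulus of continuity of $(z_1',x')\mapsto\mathbb{P}(\psi_\xi(z,Y)\le 0\mid Z_1=z_1',X=x')$, and the theorem's hypotheses give only qualitative continuity for it.

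Second, even if you add Lipschitz continuity, so that the bias is $O(h_N)$ or $O((k/N)^{1/d})$, it is not dominated by the displayed rates:
\begin{itemize}
\item For LSA, $N^{-\delta_{LSA}}$ exceeds $\sqrt{\log N}\,N^{-(1-\delta_{LSA}(2d+1))/2}$ whenever $\delta_{LSA}<1/(2d+3)$.
\item For kNN, $N^{-(1-\delta_{kNN})/d}$ exceeds $\sqrt{\log N}\,N^{-(2\delta_{kNN}-1)/2}$ whenever $\delta_{kNN}>(d+2)/(2d+2)$, e.g.\ $d=11$, $\delta_{kNN}=0.7$ as in Section~\ref{sec::NumericalExperiment}.
\end{itemize}
So ``argue they are of smaller order'' cannot succeed. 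The only defensible reading is your fallback, namely that the displayed rates are rates for the stochastic term. That is exactly what the paper's proof does: it declares that the sample efficiency of $\sup|g_\xi-\hat g_\xi|$ follows that of (\ref{eq-sample efficiency-LSA}) and (\ref{eq-sample efficiency-kNN}) and never bounds the bias term's rate. To obtain the stated rates for the full error, you would need an explicit smoothness assumption on the conditional law plus exponent restrictions that keep the bias below the variance term.

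For CART the paper does not decompose at all. It checks the hypotheses of Lemma 7 of Bertsimas and Kallus: an honest, regular, random-split tree; uniform covariates; a Lipschitz conditional mean of the indicator; and a trivially finite exponential moment because the indicator is bounded. It then applies that lemma with the indicator in place of the loss.

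Your self-contained alternative, as sketched, only works for $\delta_{cart}>1/2$. The additive VC error $\sqrt{\log N/N}$ divided by $\hat U\ge CN^{\delta_{cart}-1}$ is of order $\sqrt{\log N}\,N^{1/2-\delta_{cart}}$, whereas Lemma~\ref{lem::CART} allows any $\delta_{cart}\in(0,1)$. To cover small leaves you need a relative-deviation VC inequality, with deviations of order $\sqrt{P(A)\log N/N}$ uniformly over boxes $A$; this gives relative error $\sqrt{\log N/N^{\delta_{cart}}}$. You would also need the honest-tree leaf-diameter result, which is stated in probability for a fixed query point, to hold uniformly over $(z_1,x)$ before it can control the bias inside the supremum.
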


The significance of Theorem \ref{theo::chance-consistency} lies in its resolution of the endogeneity-discontinuity conflict. CCW provides a piecewise-constant approximation of the probability measure. This structure ``freezes'' the local sample set within specific decision-dependent neighborhoods, ensuring that the approximation error remains controlled even when the optimizer actively seeks the boundary of the chance constraint. Consequently, the $O_P(\cdot)$ rates established for kNN and LSA provide the first quantitative characterization of sample efficiency for contextual chance constraints in the presence of decision-dependent feedback loops.
 
Meanwhile, although we are unable to provide the specific sample efficiency under the CART weight approximation, it still performs well in the numerical experiments (Section \ref{AccCompare}). Theorem \ref{theo::chance-consistency} further leads to the asymptotic optimality of the optimal solution obtained through this method as mentioned in Theorem \ref{theo::asym}.

\begin{theorem}[\textbf{Asymptotic Results}] \label{theo::asym}
     Suppose Assumptions \ref{ass::decomp&Ign} to \ref{ass::slater} hold, the mapping $(z,x) \to f(z,x)$ is $L_f-$Lipschitz continuous under the 1-Wasserstein distance, and for any $(z, x)$, for all $\xi$ the probability density function of $\psi_{\xi}(z, Y)$ where $Y\sim f(z,x)$ has a consistent upper bound, then for weight function that has strongly (weakly, resp.) uniform consistency,
    \begin{enumerate}
        \item (Accuracy) $\forall x\in\mathcal{X}$, for some optimal solution $z^*$, there exists a sequence $z^n=(z_1^n, z_2^n)\in \hat{P}(x)$ that $z^n\rightarrow z^*$ almost surely (or with probability). $\hat{L}(z^n \mid X = x)$ converges to $L(z^* \mid X=x)$ almost surely (or with probability);
        \item (Optimality) $L(z^n \mid X = x)$ converges to $L(z^* \mid X=x)$ almost surely (or with probability).
    \end{enumerate}
\end{theorem}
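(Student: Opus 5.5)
We argue conditionally on the event of probability one (resp. along subsequences, for the in-probability case) on which the uniform consistency statements hold. For the objective this is Lemmas \ref{lem::kNN}--\ref{lem::LSA}: $\varepsilon_N^L:=\sup_{z}|\hat L(z\mid X=x)-L(z\mid X=x)|\to0$. For the constraints it is Theorem \ref{theo::chance-consistency}: $\varepsilon_N^g:=\max_\xi\sup_z|\hat g_\xi(z\mid X=x)-g_\xi(z\mid X=x)|\to0$. The in-probability version follows by the standard subsequence characterisation: every subsequence has a further subsequence on which the convergence is almost sure. Running the deterministic argument below along it gives convergence in probability of $z^n$, $\hat L(z^n)$ and $L(z^n)$. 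So it suffices to give a deterministic argument assuming $\varepsilon_N^L,\varepsilon_N^g\to0$.

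\textbf{Step 1: continuity of $g_\xi$ and $L$.} We first show that $L(\cdot\mid X=x)$ and $g_\xi(\cdot\mid X=x)$ are continuous in $z$. For $L$ this is routine. For $g_\xi$, split
\[|g_\xi(z)-g_\xi(z')|\le |P_{f(z,x)}(\psi_\xi(z,Y)\le0)-P_{f(z,x)}(\psi_\xi(z',Y)\le0)| + |P_{f(z,x)}(\psi_\xi(z',Y)\le0)-P_{f(z',x)}(\psi_\xi(z',Y)\le0)|.\]
\begin{itemize}
\item The first term is handled by equicontinuity (Assumption \ref{ass::continuous}). If $\|z-z'\|\le\delta$ then $|\psi_\xi(z,y)-\psi_\xi(z',y)|\le\epsilon$, so the symmetric difference of the two events lies in $\{|\psi_\xi(z,Y)|\le\epsilon\}$. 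By the uniform density bound $M$ on $\psi_\xi(z,Y)$ its probability is at most $2M\epsilon$.
\item The second term is where the Wasserstein--Lipschitz assumption enters, and we expect it to be the main obstacle, since indicators are not Lipschitz. We would smooth the indicator by a piecewise-linear $\phi_\eta$ that equals $1$ on $(-\infty,0]$ and $0$ on $[\eta,\infty)$. Then $|E\phi_\eta(\psi)-P(\psi\le0)|\le M\eta$ under both laws, by the density bound. The function $y\mapsto\phi_\eta(\psi_\xi(z',y))$ is Lipschitz if $\psi_\xi$ is Lipschitz in $y$; if only continuity on compact $\mathcal Y$ is available, we approximate it uniformly by a Lipschitz function. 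Kantorovich--Rubinstein duality then bounds the difference by $(\mathrm{Lip}/\eta)L_f\|z-z'\|+2M\eta$.
\end{itemize}
Choosing $\eta$ and then $\|z-z'\|$ small gives continuity.

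\textbf{Step 2: accuracy.} Fix $z^*$ from Assumption \ref{ass::slater}. For each $m$, pick $\tilde z^m$ with $\|\tilde z^m-z^*\|\le1/m$ and $\gamma_m:=\min_\xi\,(g_\xi(\tilde z^m)-(1-\alpha_\xi))>0$. Let $N_m$ be increasing with $\varepsilon_N^g<\gamma_m$ for all $N\ge N_m$. For such $N$, $\hat g_\xi(\tilde z^m)>1-\alpha_\xi$, so $\tilde z^m\in\hat P(x)$. Now set $z^n=\tilde z^{m}$ for $N_m\le n<N_{m+1}$ (any point of the nonempty set $\hat P(x)$ from Assumption \ref{ass::existence} for small $n$). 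Then $z^n\in\hat P(x)$ and $z^n\to z^*$. Moreover
\[|\hat L(z^n)-L(z^*)|\le \varepsilon_n^L+|L(z^n)-L(z^*)|\to0\]
by Step 1.

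\textbf{Step 3: optimality.} This is immediate from the same bound, since $L(z^n)\to L(z^*)$ by continuity of $L$. If one instead interprets $z^n$ as the minimiser of Appr-dd-ccp over $\hat P(x)$ (which exists by compactness), we would add the following argument. We have $\hat L(z^n)\le\hat L(\tilde z^n)\to L(z^*)$. Take any limit point $\bar z$ of $z^n$. Uniform convergence of $\hat g_\xi$ plus continuity of $g_\xi$ gives $g_\xi(\bar z)\ge1-\alpha_\xi$, so $\bar z$ is feasible and $L(\bar z)\ge L(z^*)$. Uniform convergence of $\hat L$ then gives $\liminf L(z^n)\ge L(z^*)$ and $\limsup L(z^n)\le\limsup \hat L(z^n)+\varepsilon_n^L\le L(z^*)$. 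This yields convergence of the optimal values as well.
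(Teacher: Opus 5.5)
Your argument is correct. Its skeleton is the Lin et al.\ routine that the paper only invokes after listing hypotheses: Slater points plus consistency of $\hat g_\xi$ give a feasible sequence converging to $z^*$, and then uniform consistency of $\hat L$ plus continuity give the value statements. Carrying it out explicitly shows something the paper's checklist hides. The feasible sequence in the accuracy claim needs only convergence of $\hat g_\xi$ at the countably many points $\tilde z^m$; it needs neither uniformity nor continuity of $g_\xi$. Continuity of $g_\xi$ enters only in your argmin argument, to show limit points are feasible.

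The genuine difference is how continuity of $g_\xi$ is obtained.
\begin{itemize}
\item The paper (Lemma~\ref{lem::Probability Continuity}) uses a single $W_1$-optimal coupling and claims the Lipschitz modulus $4ML_I(1+L_f)$. It assumes $\psi$ is Lipschitz jointly in $(z,y)$, which is a hypothesis of that lemma but not of the theorem. Its step $\mathbb{P}(|\psi(z_1,Y_1)|\le T)\le 2M\,E[T]$ treats $T=L_I(d+\|Y_1-Y_2\|)$ as independent of $Y_1$, which it is not. Splitting on $\{T>s\}$ gives only $2\sqrt{2M\,E[T]}$, a square-root modulus, though that is still enough here.
\item You split into a $\psi$-perturbation term and a law-perturbation term. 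The first, handled by equicontinuity plus the density bound, is exactly right.
\item Your smoothing treatment of the second term is more fragile. It needs continuity of $\psi_\xi$ in $y$, which is not assumed. It needs a Lipschitz constant uniform in $z'$, so you should smooth the fixed $\psi_\xi(z,\cdot)$ and absorb $z'$ by equicontinuity. It also needs a density bound for the unmatched pair $\psi_\xi(z',Y)$, $Y\sim f(z,x)$, which again has to go through equicontinuity.
\item All of this is avoidable. By Assumption~\ref{ass::regularity}(3), $f(y\mid z,x)=f(z,x,y)/f(z,x)$ is uniformly continuous on a compact set. So the law-perturbation term is at most $\mathrm{vol}(\mathcal{Y})\sup_y|f(y\mid z,x)-f(y\mid z',x)|$, uniformly over events. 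This is the first half of the paper's lemma, and it uses no regularity of $\psi$ in $y$ and no Wasserstein hypothesis.
\end{itemize}

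Two minor repairs. First, in the in-probability case the subsequence device still needs one globally defined sequence. Set $z^n=\tilde z^{m(n)}$, where $m(n)$ is the largest $m\le n$ with $\tilde z^m$ in the $n$-sample $\hat P(x)$, and take any point of $\hat P(x)$ if there is none. Then $\mathbb{P}(\|z^n-z^*\|>1/m)\le\mathbb{P}(\varepsilon^g_n\ge\gamma_m)$ for $n\ge m$, which covers both modes at once. Second, in Step 3 a minimizer of $\hat L$ over $\hat P(x)$ need not exist when $\mathcal{Z}_1$ is continuous, because $\hat L$ jumps in $z_1$. Near-minimizers give the same conclusion, as does discrete $\mathcal{Z}_1$ as in Section~\ref{sec::Solution}.
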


% \textit{Proof sketch: Compared with the content in \cite{lin2022data}, the differences of this theorem lie in the introduction of a decision-dependent effect, which affects the uniform consistency of the approximation and may introduce confounding bias. Therefore, we need to first prove the uniform consistency of this approximation under the influence of opportunity constraints. Based on this, we will re-prove the feasibility, accuracy, and optimality.}

% \begin{remark}
%     Theorem \ref{theo::asym} is not only applicable to the context considered in this paper but also extends to other optimization problems where the objective function is the expectation of an equicontinuous cost function when the aforementioned assumptions hold. Even in the presence of endogenous uncertainty, by employing kNN to approximate the objective function and the corresponding chance constraints, and then searching for the optimal solution, the properties mentioned in Theorem \ref{theo::asym} can still hold. Moreover, the chance constraint can be replaced with specific manifestations whose performance functions are equicontinuous with respect to the decision variables.
% \end{remark}

With Theorem \ref{theo::asym}, the minimum value of $L(z \mid X = x)$ can be obtained by finding the minimum of $\hat{L}(z \mid X = x)$ in $\hat{P}^N(x)$. At this point, the chance constraint requirement has also been transformed into the requirement that in the cluster, there should be at most $\alpha_{\xi}$ of points satisfying $\psi_{\xi}(z, y_i) \le 0$ for all $\xi$. Problem (\textbf{Appr-dd-ccp}) can be mathematically modeled as a MINLP in \eqref{eq-original}.
\begin{subequations}\label{eq-original}
    \begin{align}
        \min\quad& \frac 1 k\sum_{i=1}^N \mathcal{K}_i\cdot l(z, y_i),\label{eq-objective} \\
        s.t.\quad& \sum_{i=1}^N \mathcal{K}_i = k,\label{eq-ksum} \\
        &\text{Determine } \mathcal{K}_i \text{ for a given weight function}, \deltext{\nonumber\\
        &\qquad\qquad\qquad }i=1,\cdots,N, \label{eq-generalCluster}\\
        &\sum_{i=1}^N \mathcal{K}_i \gamma_{i, \xi} \le \lfloor k\alpha_{\xi}\rfloor,\xi \in \{1,\cdots, \Xi\},\label{eq-varcc1}\\
        &\psi_{\xi}(z, y_i) - M\gamma_{i, \xi} \le 0,\deltext{\nonumber\\
        &\qquad} i=1,\cdots,N, \xi \in \{1,\cdots, \Xi\},\label{eq-varcc2}\\
        &z=(z_1, z_2)\in \mathcal{Z}, k\in \mathbb{Z}^+,\deltext{\nonumber\\
        &} \mathcal{K}_i\in \{0,1 \},\gamma_i \in \{0,1\}, i=1,\cdots,N, \nonumber
    \end{align}
\end{subequations}
in which \eqref{eq-objective} and \eqref{eq-varcc1} correspond respectively to \eqref{eq-Appr-Obj} and \eqref{Est-cc}. $\mathcal{K}_i \in \{0, 1\}$ denotes the membership indicator for the $i$-th data point, i.e., $\mathcal{K}_i = \mathbb{I}\{(z_i, x_i) \in \mathcal{C}(z,x)\}$. $k=\sum_{i=1}^N \mathcal{K}_i=|\mathcal{C}(z,x)|$ is the variable that represents the cardinality of the cluster. And hence $w_i(z,x)=\mathcal{K}_i/k$. $M$ is a sufficiently large positive number. $\gamma_{i,\xi}$ is an indicator for whether $\psi_{\xi}(z, y_i)$ is larger than $0$.  \eqref{eq-ksum} and \eqref{eq-generalCluster} are cluster identification constraints that select $k$ points into the cluster. (\ref{eq-varcc1}) and (\ref{eq-varcc2}) stipulate that there are less than $\lfloor k\alpha_{\xi}\rfloor$ points in the cluster that $\psi_{\xi}(z,y_i) > 0$, as (\textbf{Appr-CC}) requires.

Specifically, in kNN, \eqref{eq-generalCluster} is
\begin{equation}\label{eq-ranking}
    \begin{aligned}
        \|z_1-z_{1i}\|^2 &- \|z_1-z_{1j}\|^2+\Vert x-x_i \Vert^2 - \Vert x-x_j \Vert^2\le M(\mathcal{K}_j - \mathcal{K}_i + 1), j=1,\cdots,N, i\neq j,
    \end{aligned}
\end{equation}
which rank the distances between historical contexts and current context. In CART, \eqref{eq-generalCluster} is $\mathcal{K}_i = \mathbb{I}((z_{1i}, x_i) \text{ and }(z_1,x)\text{ are in the same leaf node})$, which can be transformed into mixed-integer linear programming (MILP) constraints \citep{Ethan2025Hyper}. In LSA, \eqref{eq-generalCluster} is
\begin{equation}\label{eq-LSACluster}
    \begin{aligned}
        & \|z_1-z_{1i}\|^2+\Vert x-x_i \Vert^2 - h \ge M(1-\mathcal{K}_i),\\
        &h - \|z_1-z_{1i}\|^2-\Vert x-x_i \Vert^2 + \epsilon \le M\mathcal{K}_i, i=1,...,N,
    \end{aligned}
\end{equation}
which realize the logic of ``when $\|z_1-z_{1i}\|^2+\Vert x-x_i \Vert^2 \le h, \mathcal{K}_i=1$", where $\epsilon$ is an extremely small positive number.

% Due to the nonlinear and discontinuous nature of the CCW, it is difficult to solve \eqref{eq-original} directly. The quadratic objective \eqref{eq-objective} and the quadratic constraints \eqref{eq-varcc1} make the model a MINLP, which is NP-hard. The difficulty primarily arises from the endogenous uncertainty. If $z_1$ does not affects uncertainty, then using Constraints \eqref{eq-ksum} and \eqref{eq-generalCluster}, $\mathcal{K}_i$ can be uniquely determined. Then variable $\mathcal{K}_i$ and $k$ would degenerate into constants when solving the remaining problem. In other words, the endogenous uncertainty is the core reason for the significant increase in the difficulty of solving this problem.

\section{Solution Framework}\label{sec::Solution}

Formulation \eqref{eq-original} presents significant computational challenges as a MINLP. Two primary sources of complexity are: (i) Decision-dependent weights $\mathcal{K}_i$, which introduce complex combinatorial logic (e.g., sorting or tree traversal) into the optimization; and (ii) Constraints \eqref{eq-varcc1} and \eqref{eq-varcc2}, which generally define non-convex feasible regions. Direct optimization using off-the-shelf solvers is therefore computational intractable.

We develop a specialized solution framework to address these  challenges. We focus on the setting where the uncertainty-affecting decision space $\mathcal{Z}_1=\{z_1^1, \dots, z_1^{N_1}\}$ is discrete. This assumption is practically well-founded, as many strategic decisions in operations—such as selecting facility locations, pricing tiers, or technology options—are inherently discrete. Under this setting, we can effectively decouple the complex decision-dependent interactions.

Building on this assumption, the proposed framework provides a linearization reformulation method to constraints \eqref{eq-ksum}-\eqref{eq-generalCluster}, followed by a specialized strategy tailored when convexity conditions are met.

% The approximated problem \textbf{Appr-dd-ccp} presents significant computational challenges as a Mixed-Integer Non-Linear Program (MINLP). Two primary sources of complexity are: (i) the decision-dependent weights $\mathcal{K}_i(z,x)$, which introduce complex combinatorial logic (e.g., sorting or tree traversal) into the optimization; and (ii) the approximated chance constraints, which generally define non-convex feasible regions. Direct optimization using standard solvers is therefore intractable.

% To address these challenges, we develop a specialized solution framework. In this section, we focus on the setting where the uncertainty-affecting decision space $\mathcal{Z}_1=\{z_1^1, \dots, z_1^{N_1}\}$ is discrete. This assumption is practically well-founded, as many strategic decisions in operations—such as selecting facility locations, pricing tiers, or technology options—are inherently discrete. Under this setting, we can effectively decouple the complex decision-dependent interactions.

% Building on this structure, our method proceeds in three stages. In Section \ref{sec::reformulation of CF}, we reformulate the cluster identification by pre-calculating weights, thereby removing complex machine learning logic from the constraints. Section \ref{RoCC} then analyzes the structural properties of the chance constraints, establishing convexity conditions that eliminate the need for binary variables in the recourse problem.

\subsection{Linearization Reformulation of the Model} \label{sec::reformulation of CF}

The first computational hurdle in \eqref{eq-original} arises from constraints \eqref{eq-ksum}-\eqref{eq-generalCluster}. These constraints involve the endogenous variables $\mathcal{K}_i$ (or equivalently, weights $w_i(z,x)$) that depend on the decision variables in a complex, non-linear manner. Directly optimizing these constraints requires modeling the specific machine learning logic (e.g., sorting for kNN, tree traversal for CART) using Big-M formulations like \eqref{eq-ranking} and \eqref{eq-LSACluster}, resulting in a large-scale MINLP.

To address this, we exploit the discreteness of $\mathcal{Z}_1$. Instead of dynamically determining the cluster structure during optimization, we decouple the cluster identification process. Specifically, for all $z_1^t \in \mathcal{Z}_1$, $t=1,\cdots,N_1$, the corresponding set of weights are fixed. Let $\mathscr{K}^t = \{\mathcal{K}_1^t, \cdots, \mathcal{K}_N^t\}$ denote the weight vector associated with candidate $z_1^t$, which can be pre-calculated before the initiation of the optimization processes. Constraints \eqref{eq-ksum}-\eqref{eq-generalCluster} are then reformulated equivalently as a linear selection mechanism:
\begin{subequations}\label{eq-cluster reformulation}
    \begin{align}
    &\sum_{t=1}^{N_1} g_t=1, z_1 = \sum_{t=1}^{N_1} g_t z_1^t, \label{eq-select-g}\\
    &\mathcal{K}_i = \sum_{t=1}^{N_1} g_t \mathcal{K}_i^t,\quad i=1,\cdots,N, \label{eq-link-k}
    \end{align}
\end{subequations}
where $g_t \in \{0, 1\}$ is a binary selection variable and represents the choice of candidate $z_1^t$. Through this reformulation, the non-linear dependency of $\mathcal{K}_i$ on $z_1$ is replaced by the linear relationship \eqref{eq-cluster reformulation}. After the linearization of the bilinear constraints \eqref{eq-varcc1} \citep{glover1975improved}, the resulting model can be directly solved by off-the-shelf solvers. 

To efficiently compute $\mathscr{K}^t$ in \eqref{eq-cluster reformulation} before optimization, we introduce acceleration strategies for each weight type, with the final complexity being $O((N + N_1) \log N)$, which works best for low-to-medium dimensions:

\begin{itemize}
    \item \textbf{For kNN (Top-$k$ Search):} A brute-force approach requires calculating distances from a candidate $z_1^t$ to all $N$ historical points and then sorting them, yielding a overall complexity of $O(N_1\cdot N \log N)$. To accelerate this, k-d trees can be applied. As established by \cite{friedman1977algorithm}, searching for the $k$ nearest neighbors in such structures achieves an expected time complexity of $O(\log N)$. Thus, the total complexity for constructing the tree and querying all $N_1$ candidates is reduced to $O((N + N_1) \log N)$.
    
    \item \textbf{For CART (Tree Traversal):} Since the CART partition is static once the tree is trained, identifying the cluster for all $z_1^t$ reduces to passing the candidates down the tree. This can be batched efficiently, with a complexity of $O((N + N_1) \log N)$
    
    \item \textbf{For LSA (Spatial Partitioning):} The brute-force calculation of $\mathscr{K}^t$ involves computing pairwise distances between every candidate and every data point, resulting in a complexity of $O(N_1 \cdot N)$. We employ a k-d tree structure to index the historical data (taking $O(N \log N)$ construction time), which allows us to query the neighborhood set in $O(\log N)$ average time. Consequently, the total complexity is reduced to $O((N + N_1) \log N)$ (See  \ref{app::LSA}).
\end{itemize}

By applying the reformulation \eqref{eq-cluster reformulation} and these acceleration strategies, we transform the original MINLP into a standard MILP, which can be solved directly using off-the-shelf solvers.

\subsection{Special Case: Convexity Certification via Nested Structure} \label{RoCC}

Although \eqref{eq-cluster reformulation} in Section \ref{sec::reformulation of CF} linearizes constraints \eqref{eq-ksum}-\eqref{eq-generalCluster}, the approximated chance constraints \eqref{Est-cc}, MIP reformulated as \eqref{eq-varcc1}-\eqref{eq-varcc2}, remain a computational bottleneck. Specifically, requiring the total weight of satisfied data points exceeds $1-\alpha$ is inherently combinatorial. Modeling this requirement directly necessitates introducing binary variables for each scenario and Big-M constraints, which would render the remaining problem an intractable MILP.

To bypass this complexity, we further explore under what conditions constraints \eqref{eq-varcc1} and \eqref{eq-varcc2} become convex.
%, thereby allowing for efficient optimization without additional integer variables\hl{???}.
%
%\paragraph{Scenario-Induced Sub-feasible regions.}
For a fixed $z_1$ and a specific historical data point $y_i$, we define the \textit{sub-feasible region} $S(z_1, y_i)$ as the set of $z_2$ that satisfies the $\xi$-th chance constraint for this data point:
\begin{equation}
    S(z_1, y_i) = \{z_2 \in \mathcal{Z}_2 : \psi_{\xi}(z_1, z_2, y_i) \le 0\}.
\end{equation}

Formally, constraint \eqref{eq-varcc1} and \eqref{eq-varcc2} are satisfied if there exists a subset of indices $\mathcal{I} \subseteq \{1, \cdots, N\}$ such that the total weight $\sum_{i \in \mathcal{I}} w_i(z_1, x) \ge 1-\alpha_{\xi}$, and $z_2$ lies in the common intersection of the corresponding sets, namely $z_2 \in \bigcap_{i \in \mathcal{I}} S(z_1, y_i)$. %The feasible region generated from the $\xi$-th chance constraint is denoted as $F_{\xi}(z_1)$.
%
%\paragraph{The Nested Property.}

Note that these sub-feasible regions exhibit a \textit{nested structure} in many operations management contexts. Consequently, satisfying the constraint for a ``stricter" scenario implies satisfaction for ``easier" scenarios.
This structural property is formally captured in Lemma \ref{lem::cc}.

\begin{lemma}[Convex Approximated Chance Constraint] \label{lem::cc}
    Fix \(z_1\) and \(\xi\). Suppose that for any \(y_1,y_2\in Y\), \(S(z_1,y_1)\) and \(S(z_1, y_2)\) are nested and convex, i.e., \(S(z_1, y_1)\subseteq S(z_1, y_2)\) or \(S(z_1, y_2)\subseteq S(z_1, y_1)\). Then for any \(0<\alpha_{\xi}\le 1\), the approximated feasible region generated from the $\xi$-th chance constraint, denoted as \(F_\xi(z_1)\), is convex.
\end{lemma}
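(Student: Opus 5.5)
The plan is to show that, under the nested hypothesis, $F_\xi(z_1)$ is a union of a chain of convex sets, and in fact equals a single one of the sub-feasible regions. This would make it convex immediately.

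\emph{Setup.} With $z_1$ and $x$ fixed, the weights $w_i=w_i(z_1,x)$ are constants, by Assumption~\ref{ass::decomp&Ign} and the pre-computed clusters of Section~\ref{sec::reformulation of CF}. Restrict attention to the cluster $\mathcal{C}=\{i: w_i>0\}$. By the discussion preceding the lemma,
\[
F_\xi(z_1)=\Bigl\{z_2\in\mathcal{Z}_2:\ \textstyle\sum_{i} w_i\,\mathbb{I}\{z_2\in S(z_1,y_i)\}\ge 1-\alpha_\xi\Bigr\}
=\bigcup_{\mathcal{I}:\ \sum_{i\in\mathcal{I}}w_i\ge 1-\alpha_\xi}\ \bigcap_{i\in\mathcal{I}} S(z_1,y_i).
\]

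\emph{Step 1: order the regions.} Nestedness is pairwise comparability under inclusion, so the finitely many sets $S(z_1,y_i)$, $i\in\mathcal{C}$, form a chain. Relabel the indices so that $S_{(1)}\subseteq S_{(2)}\subseteq\cdots\subseteq S_{(k)}$, breaking ties arbitrarily. For any index set $\mathcal{I}$, the intersection $\bigcap_{i\in\mathcal{I}}S_{(i)}$ is simply $S_{(\min\mathcal{I})}$.

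\emph{Step 2: collapse the union.} Let $m^*$ be the largest index $m$ such that $\sum_{j\ge m} w_{(j)}\ge 1-\alpha_\xi$. Such an $m$ exists, because $\sum_{\mathcal{C}} w=1\ge 1-\alpha_\xi$ makes $m=1$ admissible.

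For the first inclusion, take any admissible $\mathcal{I}$ and set $m=\min\mathcal{I}$. Then $\sum_{j\ge m}w_{(j)}\ge\sum_{i\in\mathcal{I}}w_i\ge 1-\alpha_\xi$, so $m\le m^*$, and therefore $S_{(m)}\subseteq S_{(m^*)}$.

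For the reverse inclusion, $\mathcal{I}=\{m^*,\dots,k\}$ is itself admissible, and its intersection is exactly $S_{(m^*)}$.

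Together these give $F_\xi(z_1)=S_{(m^*)}=S(z_1,y_{(m^*)})$, which is convex by hypothesis. Equivalently, a pointwise version of the argument works: if $z_2\in F_\xi(z_1)$ and $j$ is the smallest index with $z_2\in S_{(j)}$, then $z_2\in S_{(l)}$ for all $l\ge j$ by nesting, so the satisfied weight is exactly $\sum_{l\ge j}w_{(l)}$. This again yields membership in $S_{(m^*)}$.

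\emph{Main obstacle.} The step needing the most care is the bookkeeping in Step 2. Ties and duplicate sets must be handled by the arbitrary tiebreak, and the boundary case $\alpha_\xi=1$ must be checked: there $m^*=k$, and the constraint should arguably be vacuous. I would check that $\sum w\ge 0$ holds for every $z_2$, so that $F_\xi=\mathcal{Z}_2$, and the claim still holds provided $\mathcal{Z}_2$ is convex. I would note this as an implicit requirement; alternatively the empty index set can be allowed, with its intersection taken to be $\mathcal{Z}_2$. A second minor point is that only the points in the cluster carry weight. Points with $w_i=0$ never help satisfy the constraint, so restricting the chain to $\mathcal{C}$ is harmless. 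Finally, intersecting over several $\xi$ preserves convexity, which gives the downstream use of the result.
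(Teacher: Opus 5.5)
Your argument is correct, but it takes a different route from the paper's appendix proof.

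\textbf{The paper's route.} The paper proves an auxiliary combinatorial fact. If $A_1,\dots,A_N$ are convex and every pairwise union $A_i\cup A_j$ is convex, then for each $K\ge 1$ the set of points lying in at least $K$ of the $A_i$ is convex. The proof takes $x$ and $y$ with witnessing index sets. Shared indices survive the convex combination by convexity. The unshared indices are paired $d_j\leftrightarrow e_j$, and convexity of $A_{d_j}\cup A_{e_j}$ places the combination in one set of each pair, which recovers $K$ distinct indices.

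Nestedness enters only to make $S(z_1,y_i)\cup S(z_1,y_j)$ convex, since that union is just the larger set. The threshold is the count $\lceil k(1-\alpha_\xi)\rceil$, which suffices for CCW because all in-cluster weights equal $1/k$.

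\textbf{What each route buys.} The paper's route gives convexity under the weaker hypothesis of pairwise-convex unions; overlapping but non-nested intervals such as $[0,2]$ and $[1,3]$ are covered. However, it is purely existential and tied to a count threshold.

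Your chain-collapse argument needs full nestedness. In exchange, it works for arbitrary nonnegative weights. It also identifies the region explicitly as $F_\xi(z_1)=S(z_1,y_{(m^*)})$. This is the ``critical scenario'' characterization the paper states only informally in the main text and then exploits in Proposition~\ref{prop::var_ss}. Your choice of $m^*$ as the \emph{largest} admissible index is the correct reading; the main text's ``smallest index,'' taken literally, would always give $S(z_1,y_{(1)})$.

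Your handling of $\alpha_\xi=1$ is also more careful than the paper's. There the threshold is $K=0$, which the auxiliary proposition does not cover. In that case $F_\xi(z_1)=\mathcal{Z}_2$, which is convex only if $\mathcal{Z}_2$ is.
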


%\paragraph{Geometric Interpretation and Convexity.}
Lemma \ref{lem::cc} reveals that the nested structure simplifies the combinatorial nature of constraints \eqref{Est-cc}. Figure \ref{fig:nested_sets} presents a visual intuition of this property.
Each horizontal bar represents $S(z_1, y_{(i)})$ for a specific $y_{(i)}$. We sort the scenarios by difficulty such that they form a descending chain $S(z_1, y_{(1)}) \subseteq S(z_1, y_{(2)}) \subseteq \dots \subseteq S(z_1, y_{(N)})$.

Under this sorting, the problem of selecting a subset of scenarios with cumulative weight $1-\alpha_{\xi}$ collapses into identifying a single \textit{critical scenario} (the quantile). $F_{\xi}(z_1)$ is then exactly equivalent to $S(z_1, y_{(k^*)})$, where $k^*$ is the smallest index that the cumulative weight of scenarios $\{y_{(k^*)}, \dots, y_{(N)}\}$ reaches $1-\alpha_{\xi}$.

\begin{remark}
    This nestedness arises in many contexts where constraints scale with uncertainty. For instance, in inventory management, an order quantity satisfying high demand trivially satisfies low demand; in service systems, meeting a strict deadline implies meeting any looser deadline; in capacity planning, a system built for peak loads inherently accommodates off-peak volumes.
\end{remark}

Consequently, since the individual set $S(z_1, y_{(k^*)})$ is convex (as shown by the solid line interval in Figure \ref{fig:nested_sets}), the entire feasible region $F_{\xi}(z_1)$ is convex. This critical result implies that for any fixed $z_1$, the optimal $z_2$ can be found using standard convex optimization techniques, eliminating the need for complex MILP formulations for the remaining problem.

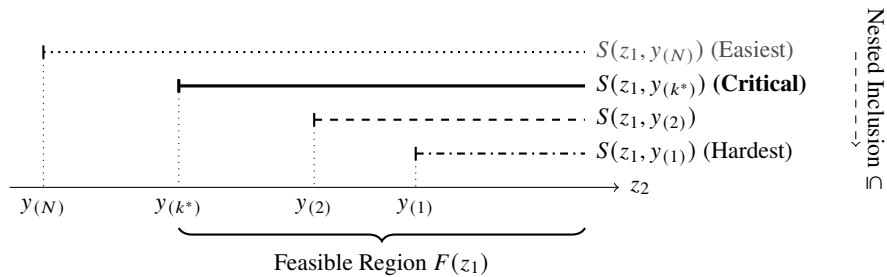
\begin{figure}[htbp]
\centering
\begin{tikzpicture}[scale=0.9]
    % Draw Axis
    \draw[->] (0,0) -- (9,0) node[right] {\footnotesize $z_2$};
    
    % --- Intervals (Feasible Regions) using Patterns for B&W ---
    
    % 1. S_(1) - Most difficult -> Dash-dotted line
    % Represents the strictest constraint (subset of all others)
    \draw[thick, dashdotted] (6, 0.5) -- (8.5, 0.5);
    \draw[dotted] (6, 0.5) -- (6, 0) node[below] {\footnotesize $y_{(1)}$};
    \node[right] at (8.5, 0.5) {\footnotesize $S(z_1, y_{(1)})$ (Hardest)};
    % Add a tick at the start
    \draw[thick] (6, 0.4) -- (6, 0.6); 

    % 2. S_(2) -> Dashed line
    \draw[thick, dashed] (4.5, 1.0) -- (8.5, 1.0);
    \draw[dotted] (4.5, 1.0) -- (4.5, 0) node[below] {\footnotesize $y_{(2)}$};
    \node[right] at (8.5, 1.0) {\footnotesize $S(z_1, y_{(2)})$};
    \draw[thick] (4.5, 0.9) -- (4.5, 1.1);

    % 3. S_(k) - Critical -> Very Thick Solid Line
    % This is the most important one, so we use a solid, heavy line
    \draw[very thick] (2.5, 1.5) -- (8.5, 1.5);
    \draw[dotted] (2.5, 1.5) -- (2.5, 0) node[below] {\footnotesize $y_{(k^*)}$};
    \node[right] at (8.5, 1.5) {\footnotesize \textbf{$S(z_1, y_{(k^*)})$ (Critical)}};
    \draw[very thick] (2.5, 1.4) -- (2.5, 1.6);

    % 4. S_(N) - Easiest -> Dotted line
    \draw[thick, dotted] (0.5, 2.0) -- (8.5, 2.0);
    \draw[dotted] (0.5, 2.0) -- (0.5, 0) node[below] {\footnotesize $y_{(N)}$};
    \node[black!70, right] at (8.5, 2.0) {\footnotesize $S(z_1, y_{(N)})$ (Easiest)};
    \draw[thick] (0.5, 1.9) -- (0.5, 2.1);

    % Nested Arrow (Keep dashed)
    \draw[->, dashed] (12.5, 2.0) -- (12.5, 0.6);
    \node[rotate=-90, above] at (12.5, 1.3) {\footnotesize Nested Inclusion $\subseteq$};

    % Highlight Feasible Region (Brace)
    % Changed color to black
    \draw[decorate,decoration={brace,amplitude=5pt,mirror},thick] (2.5,-0.6) -- (8.5,-0.6) node[midway,below=5pt] {\footnotesize Feasible Region $F(z_1)$};

\end{tikzpicture}
\caption{\centering{Visual illustration of the nested property.}}
\label{fig:nested_sets}
\end{figure}

%\paragraph{Efficient Solution Strategies.}
The convexity established in Lemma \ref{lem::cc} fundamentally simplifies the problem structure. Unlike the general case that requires the use of mixed-integer programming, the problem associated with a fixed $z_1$ reduces to a standard convex optimization problem. It motivates us to develop a Benders Decomposition approach for solving the original problem under the convexity conditions. The corresponding Master Problem (MP) iteratively selects a candidate $z_1^t$, and the convex Subproblem (SP) with fixed $z_1^t$ generates optimality or feasibility cuts to refine the search. Since the SP is convex, valid cuts can be efficiently derived from dual information or combinatorial logic. We relegate the detailed algorithm and cut formulations to  \ref{BDSF}. %This allows us to employ decomposition-based strategies:

%\begin{enumerate}
 %   \item \textbf{Naive Enumeration:} If the number of candidates $N_1$ is small and the convex subproblems are computationally cheap, one can simply  solve the corresponding convex subproblem for $z_2$ while enumerate all $z_1^t$ in the cluster identification process, and select the optimal pair.
    
%    \item \textbf{Benders Decomposition:} For cases where $N_1$ is large, we can adopt a Benders Decomposition approach. The Master Problem (MP) iteratively selects a candidate $z_1^t$, and the convex Subproblem (SP) generates optimality or feasibility cuts to refine the search. Since the subproblem is convex, valid cuts can be efficiently derived from dual information or combinatorial logic. We relegate the detailed algorithm and cut formulations to  \ref{BDSF}.
%\end{enumerate}

\section{Illustrative Example: Price-setting Newsvendor Problem} \label{sec::PSNP}
We adopt the Price-Setting Newsvendor Problem (PSNP), which has garnered considerable attention in the relevant literature \citep{deyong2020price}, to demonstrate the applicability of the reformulation method and the proposed solution framework.

We consider a retailer minimizing loss by determining a discrete price $p \in P = \{p^1, \cdots, p^{N_P}\}$ (the uncertainty-affecting decision) and an order quantity $0 \le q \le \bar{q}$ (the uncertainty-independent decision). The demand $d$ is uncertain and depends on the price $p$ and context $x$. Let $c$ be the unit ordering cost and $s$ be the unit salvage value. The loss function is defined as:
\begin{equation}\label{eq-cost}
    l(p, q; d) = -(p - c)q + (p - s)\max\{q-d, 0\}.
\end{equation}

We focus on the PSNP with a profit target constraint (also known as the Value-at-Risk (VaR) constraint), which requires the probability of the profit falling below a target $v$ is capped at $\alpha_v$. (The discussion regarding the alternative service level constraint is provided in  \ref{app::PSNP SL}).
The original optimization problem, which minimizes the expected loss subject to the profit target, is formulated as:
\begin{subequations} \label{eq-PSNP-original}
    \begin{align}
        \min\limits_{p\in P,\ 0 \le q \le \bar{q}}& \quad L(p,q\mid X=x)=E_{D\sim F(p, x)}[l(p, q; D) \mid X=x], \label{eq-PSNP}\\
        \text{s.t.} & \quad \mathbb{P}_D[l(p, q; D) \le -v \mid X=x] \ge 1 - \alpha_v. \label{eq-orig-cc}
    \end{align}
\end{subequations}

\subsection{Reformulation of the Approximated Chance Constraint} \label{subsec::PSNP RoCC}
Given a historical dataset $S_N = \{(p_i, q_i, x_i, d_i)\}_{i=1}^N$, we apply the CCW approximation framework to approximate \eqref{eq-PSNP} and \eqref{eq-orig-cc}.
The approximated model is formulated as:
\begin{subequations} \label{eq-PSNP-approx}
    \begin{align}
        \min_{p\in P,\ 0 \le q \le \bar{q}}\quad & \hat{L}(p, q) = \sum_{i = 1}^N w_i(p, x) l(p, q; d_i), \label{Est-nvObj} \\
        \text{s.t.} \quad & \sum_{i=1}^N w_i(p,x) \mathbb{I}\{l(p,q;d_i) \le -v\} \ge 1-\alpha_v. \label{Est-var}
    \end{align}
\end{subequations}
Equation \eqref{Est-nvObj} approximates the expected loss (\ref{eq-PSNP}), while constraint\ \eqref{Est-var} approximates the feasible region of the profit target constraint (\ref{eq-orig-cc}).

%Following the framework in Section \ref{sec::Solution}, for any fixed uncertainty-affecting decision (price) $p \in P$, the cluster structure is determined. Specifically, the weights $\mathcal{K}_i$ are pre-calculated.
%We first address the approximated profit target constraint \eqref{Est-var}. Specifically, for a fixed price $p$, the constraint requires:
%\begin{equation} \label{eq-PSNP-constraint-static}
%    \sum_{i=1}^N w_i(p, x) \cdot \mathbb{I}\{ l(p, q; d) \le -v \} \ge 1-\alpha_v.
%\end{equation}

To understand the structure of constraint (\ref{eq-orig-cc}), we examine the geometry of the loss function $l(p, q; d)$. As shown in Figure \ref{fig:VaRDemo}, when $p$ and $d$ are fixed, $l(p, q; d)$ is V-shaped with respect to $q$. The function value starts at 0 when $q=0$, decreases to a minimum of $-(p-c)d$ when $q=d$, and then continuously increases. The gradients during the descending and ascending phases are constant values $(-(p-c))$ and $(p-s)$, respectively.

\begin{figure}[htbp]
    \centering
    \includegraphics[trim=0 0cm 0 2cm, clip, width=0.65\linewidth]{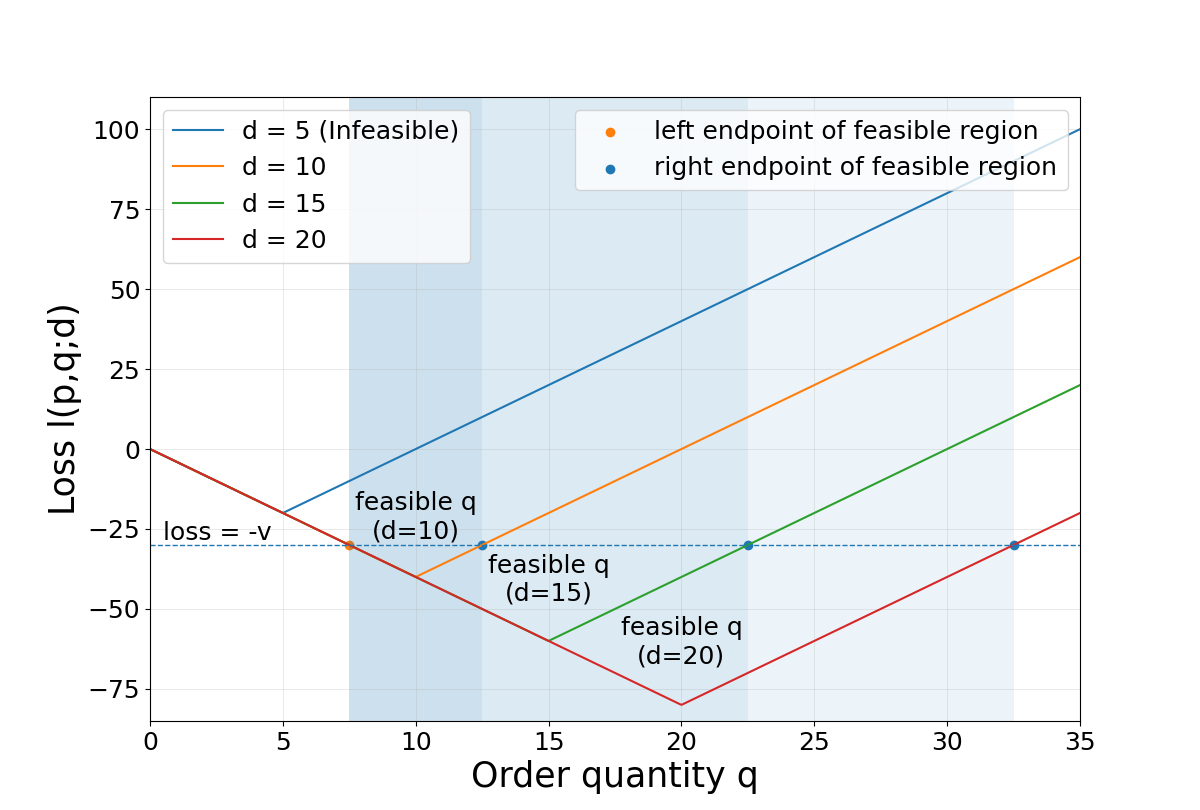}
    \caption{\centering A demonstration of the loss function and the VaR constraint with different $d$. \label{fig:VaRDemo}}
\end{figure}

Therefore, to satisfy the profit target condition $l(p,q;d) \leq -v$, the demand $d$ must be sufficiently high (specifically $(p-c)d \geq v$), and the order quantity $q$ must fall within a closed interval. Crucially, as $d$ increases, the ``valley" of the loss function deepens and widens to the right. Consequently, the left endpoint of the feasible interval remains constant, while the right endpoint increases.
This behavior confirms the nested property discussed in Section \ref{RoCC}: the feasible interval for a smaller demand scenario is strictly contained within the interval for a larger demand scenario. This allows us to reformulate constraint \eqref{Est-var} as a single deterministic interval, as stated in Proposition \ref{prop::var_ss}.

\begin{proposition}[\textbf{Est-Profit reformulation}]\label{prop::var_ss}
     Assume that $\min_i\{|(p-c)d_i - v|\} > 0$ for any possible $p$. For all $p$, denote $\upsilon_i$ as a binary indicator for whether $d_i > v/(p-c)$, and denote $D^v_i = \mathcal{K}_i\upsilon_id_i$ for all $i$. Sort $D^v_i$ as $(D^v_{(1)}, D^v_{(2)}, \cdots, D^v_{(N)})$, where $D^v_{(1)} \ge D^v_{(2)} \ge \cdots \ge D^v_{(N)}$. Constraint (\ref{Est-var}) is equivalent to $v/(p-c) \le q \le ((p-s)D^v_{(\lceil k\alpha_v \rceil)}-v)/(c-s)$, where $k = \sum_{i=1}^N \mathcal{K}_i$ is the number of historical points in the cluster.
\end{proposition}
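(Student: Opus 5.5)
We fix the price $p$ (so the cluster indicators $\mathcal{K}_i$ and $k=\sum_i\mathcal{K}_i$ are constants, as in Section \ref{sec::reformulation of CF}). Since $w_i(p,x)=\mathcal{K}_i/k$, constraint \eqref{Est-var} says that at least $k(1-\alpha_v)$ cluster points satisfy $l(p,q;d_i)\le -v$. Equivalently, since the count is an integer, at most $\lfloor k\alpha_v\rfloor$ cluster points violate it. We will describe, for each single scenario $d_i$, the set $S(p,d_i)=\{q\in[0,\bar q]: l(p,q;d_i)\le -v\}$, show these sets are nested intervals with a common left endpoint, and then read off the quantile, in the spirit of Lemma \ref{lem::cc}.

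The first step is the single-scenario computation, using the V-shape of $l$ in $q$. For $q\le d_i$, $l=-(p-c)q$, so $l\le -v$ iff $q\ge v/(p-c)$. This is possible in that branch only if $v/(p-c)\le d_i$, and by the assumption $\min_i|(p-c)d_i-v|>0$ this means $d_i>v/(p-c)$, i.e. $\upsilon_i=1$. For $q> d_i$, $l=-(p-c)q+(p-s)(q-d_i)=(c-s)q-(p-s)d_i$, so $l\le -v$ iff $q\le ((p-s)d_i-v)/(c-s)$. Hence:
\begin{itemize}
\item if $\upsilon_i=1$, then $S(p,d_i)=[v/(p-c),\,((p-s)d_i-v)/(c-s)]$;
\item if $\upsilon_i=0$, then $S(p,d_i)=\emptyset$, since the minimum $-(p-c)d_i$ of $l$ exceeds $-v$.
\end{itemize}
We also check consistency: $((p-s)d_i-v)/(c-s)\ge d_i$ iff $(p-c)d_i\ge v$, so the interval is nonempty exactly when $\upsilon_i=1$. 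The right endpoint is increasing in $d_i$, so the sets are nested, ordered by $D^v_i=\mathcal{K}_i\upsilon_id_i$. Setting $D^v_i=0$ for points with $\upsilon_i=0$ or outside the cluster correctly places them at the bottom of the order: they either contribute nothing or give empty sets. Implicitly we need $p>c>s$.

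The second step is the counting argument. Given $q\ge v/(p-c)$, the number of cluster points with $q\in S(p,d_i)$ equals the number of $i$ with $\mathcal{K}_i\upsilon_i=1$ and $(p-s)d_i-v\ge(c-s)q$. Because the right endpoints are monotone in $D^v_i$, this number is at least $m$ iff $q\le((p-s)D^v_{(m)}-v)/(c-s)$. Here we need care when $D^v_{(m)}=0$: then the bound is negative and the constraint correctly becomes infeasible. Setting the required number of satisfied points to $m=\lceil k(1-\alpha_v)\rceil$ gives the upper endpoint. If $q<v/(p-c)$, no scenario is satisfied, so the lower bound $q\ge v/(p-c)$ is necessary.

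The main obstacle is index bookkeeping: reconciling the required order statistic $\lceil k(1-\alpha_v)\rceil$ with the stated index $\lceil k\alpha_v\rceil$. Since $D^v$ is sorted in descending order, $D^v_{(m)}$ is the $m$-th largest value, so the natural index is $\lceil k(1-\alpha_v)\rceil$. I would verify which convention is intended, most likely by rereading "$\alpha_v$" as a satisfaction level or by counting from the other end, and adjust the index accordingly. I would also handle ties among the $d_i$, which are harmless because the inequalities are non-strict. The rest is routine.
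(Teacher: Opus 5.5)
Your proof is correct and follows the same route as the paper's own proof. Each in-cluster scenario with $\upsilon_i=1$ gives the interval $[v/(p-c),\,((p-s)d_i-v)/(c-s)]$, and $\upsilon_i=0$ gives the empty set; these intervals share a left endpoint, their right endpoints increase in $d_i$, and the constraint collapses to an order-statistic bound on $D^v$.

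You can drop the hedge on the index. The paper's proof itself concludes that the right endpoint comes from the $\lceil k(1-\alpha)\rceil$-th largest value in descending order, exactly as you derived. The $\lceil k\alpha_v\rceil$ in the statement agrees with \eqref{Est-var} only if $\alpha_v$ is read as the required satisfaction level: for $k=10$ and $\alpha_v=0.1$, it would admit $q$ equal to the largest right endpoint, where only one cluster point meets the target.
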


\begin{remark}
    The requirement of $\min\{|(p-c)d_i - v|\} > 0$ is not difficult to achieve in real situations. Since both $p$ and $d_i$ are discrete values, the possible values of $(p-c)d_i$ are finite. By making adjustment to the value of $v$, we can ensure that none of the values of $(p-c)d_i$ is exactly equal to $v$. Such an adjustment does not affect real-world scenarios.
\end{remark}

Proposition \ref{prop::var_ss} implies that for a fixed $p$, the feasible region for $q$ is a simple closed interval. Denote the lower bound and upper bound derived in Proposition \ref{prop::var_ss} as $q_{min} = v/(p-c)$ and $q_{max} = ((p-s)D^v_{(\lceil K\alpha_v \rceil)}-v)/(c-s)$, respectively.

\subsection{Structural Analysis and Efficient Solution} \label{subsec::PSNP subAcc}

Having established the feasible interval $[q_{min}, q_{max}]$ for $q$, we now derive the optimal $q$ for a fixed $p$. The objective \eqref{Est-nvObj} represents the weighted expected Newsvendor loss. Since the loss function is convex with respect to $q$, the optimal solution for a fixed $p$ can be derived analytically by finding the unconstrained optimum and projecting it onto the feasible interval.

\begin{proposition} \label{prop::optimal_solution}
    (\textbf{Optimal Solution}) For all price $p$, sort the current cluster based on the daily demand to obtain the sorted cluster demand data, denoted as $(d_{(1)}, d_{(2)}, \cdots, d_{(k)})$, where $d_{(1)} \le d_{(2)} \le \cdots \le d_{(k)}$. Let $n^*=\min\{n \mid n(p-s)-k(p-c) > 0\}$, then, $q^* = \min\{\max\{q_{min}, d_{(n^*)}\}, q_{max}\}$, $y_i^* = \max\{q^*-d_i, 0\}$ is the optimal solution for the subproblem $SP(p)$.
\end{proposition}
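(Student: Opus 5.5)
For a fixed price $p$ the cluster indicators $\mathcal{K}_i$ are fixed, so I would treat $SP(p)$ as a one-dimensional convex problem in $q$. The objective is $\frac1k\sum_i\mathcal{K}_i l(p,q;d_i)$ and the feasible set is the interval $[q_{min},q_{max}]$ from Proposition \ref{prop::var_ss}. The auxiliary variables are $y_i\ge q-d_i$, $y_i\ge 0$, which linearize the term $\max\{q-d_i,0\}$ in \eqref{eq-cost}. Since $p-s>0$, the objective is increasing in each $y_i$. Hence at any optimum $y_i=\max\{q-d_i,0\}$, and the problem reduces to minimizing $h(q)=\frac1k\sum_{j=1}^k\bigl[-(p-c)q+(p-s)\max\{q-d_{(j)},0\}\bigr]$ over the interval. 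This proves the second part of the claim once $q^*$ is found.

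\emph{Step 1 (unconstrained minimizer).} The function $h$ is convex and piecewise linear, with breakpoints at the sorted demands. On the open segment $(d_{(n)},d_{(n+1)})$, exactly $n$ demands lie below $q$, so the slope is $\frac1k\bigl[n(p-s)-k(p-c)\bigr]$. This slope is nondecreasing in $n$. It is negative for $n=0$, since $p>c$, and it is positive for $n=k$, since $s<c$. Let $n^*$ be the first $n$ with positive slope. Then the slope is $\le 0$ to the left of $d_{(n^*)}$ and $>0$ to the right, so $d_{(n^*)}$ is a global minimizer of $h$ on $\mathbb{R}$. Because $n^*$ is defined with strict inequality, the tie case where the slope equals zero is handled automatically: $d_{(n^*)}$ is then the right endpoint of a flat segment and is still optimal.

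\emph{Step 2 (projection).} For a one-dimensional convex function with minimizer $m$, the minimizer over $[a,b]$ is $\min\{\max\{a,m\},b\}$. This holds because $h$ is nonincreasing on $(-\infty,m]$ and nondecreasing on $[m,\infty)$. Applying it with $m=d_{(n^*)}$ gives $q^*$, provided $q_{min}\le q_{max}$. I would justify that inequality from feasibility: Assumption \ref{ass::existence}, or the case where $SP(p)$ is infeasible is excluded, so that Proposition \ref{prop::var_ss} yields a nonempty interval.

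The main obstacle is Step 1. The care needed there is in correctly counting which demands lie at or below $q$ at the breakpoints (ties among the $d_{(j)}$), and in confirming that the index convention of $n^*$ matches the sorted order. I would handle both by arguing with one-sided derivatives at $d_{(n^*)}$: the left derivative is $\frac1k[(n^*-1)(p-s)-k(p-c)]\le0$ by minimality of $n^*$, and the right derivative is $\ge\frac1k[n^*(p-s)-k(p-c)]>0$. Together these give $0\in\partial h(d_{(n^*)})$, even when demands repeat.
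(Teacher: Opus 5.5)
Your proof is correct and follows the same route as the paper's: both reduce $SP(p)$ to minimizing a convex piecewise-linear function of $q$ whose slope between consecutive sorted demands is $\frac1k[n(p-s)-k(p-c)]$, read off $d_{(n^*)}$ as the unconstrained minimizer from the sign change of that slope, and then clip to $[q_{min},q_{max}]$ (the paper's appendix proof leaves this clipping step implicit, and you justify it properly via unimodality). Two harmless precision points: with repeated demands, the left derivative at $d_{(n^*)}$ is only bounded above by $\frac1k[(n^*-1)(p-s)-k(p-c)]$ rather than equal to it; and $y_i=\max\{q-d_i,0\}$ is forced at an optimum only for indices with $\mathcal{K}_i=1$, while for the others it is simply one optimal choice.
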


Proposition \ref{prop::optimal_solution} reveals that for any price, the optimal order quantity admits a closed-form representation. This structural insight provides a significant computational advantage when implementing the Benders Decomposition framework proposed in Section \ref{RoCC}.

Specifically, the existence of an analytical solution allows us to solve the subproblem instantaneously without relying on numerical optimization solvers. In the context of the Benders decomposition, this means that for every iteration of the master problem, the optimal dual variables (and consequently the Benders cuts) can be derived directly and efficiently. This seamless integration of the specific problem structure into our general decomposition framework substantially reduces the computational burden and accelerates the overall convergence of the solution algorithm. We provide the detailed MP and SP formulations for the BD approach in  \ref{app::benders_newsvendor} and utilize it for performance comparison in Section \ref{subsec::SpeedTest}.

\section{Numerical Experiment} \label{sec::NumericalExperiment}

In this section, we conduct simulation-based numerical experiments to evaluate the proposed method using the price-setting newsvendor problem with VaR constraint. We set the demand-price relationship as an explicit function in the form of the commonly studied location-and-scale demand, serving as the ground truth. Section \ref{DataGeneration} introduces the data generation approach used in the experiments. Section \ref{subsec::nume sample efficiency} verifies the sample efficiency of kNN and LSA proposed in Theorem \ref{theo::chance-consistency}. Section \ref{AccCompare} compares our method with parametric models to demonstrate its effectiveness. Finally Section \ref{subsec::SpeedTest} tests the effectiveness of the proposed solution framework in Section \ref{sec::Solution}. 

The numerical experiment is conducted on a server equipped with a FusionServer G5500 V6, 128 cores, and 2TB of RAM, running on a Linux x64 system. The code is implemented in Python 3.10.12, with Gurobi version 9.5.2, using its default solver settings.

\subsection{Data generation method} \label{DataGeneration}
In the simulation experiments, we adopt two location-scale demand functions proposed in \cite{harsha2021prescriptive} as the true demand-price relationships:
\begin{enumerate}
    \item $D = \beta_0 + \beta_1 p + \sum_{i=2}^{11} \beta_iX_{i-2} +\left(\gamma_0+\gamma_1p+\gamma_2p^2+\sum_{i=3}^{12}\gamma_iX_{i-3}\right)U,$
    \item $D = \beta_0 + \beta_1 p + \beta_1'p^2 +\sum_{i=2}^{11} \beta_iX_{i-2} + (\gamma_0+\gamma_1p)\min(U,0) + \gamma_2p^2\max(U,0),$
\end{enumerate}
which are denoted as relationship modes 1 and 2. $\beta_0=200$, $\beta_1=-10$, $\beta_1'=-0.2$, $(\beta_2,\beta_3, \cdots, \beta_{11})=(-2, -1, 0, 1, 2, 0, 0, 0, 0, 0)/\sqrt{10}$, $\gamma_0=20$, $\gamma_1=-2.1$, $\gamma_2=0.2$, $(\gamma_3, \gamma_4, \cdots, \gamma_{12}) = (1, 1, 1, 1, 1, 0, 0, 0, 0, 0)/\sqrt{5}$. And $P = \{10, 10.1, 10.2,\cdots, 29.8, 29.9\}$, $c=5, s=2$, which follows examples in \cite{harsha2021prescriptive} and \cite{serrano2024bilevel}.

For random variable $U$, we consider the following three uncertainty modes:
\begin{enumerate}
    \item Normal(0, 1): Normal distribution with mean 0 and standard deviation 1.

    \item Log-normal(0, 1): Log-normal distribution with the underlying normal distribution having mean 0 and standard deviation 1.

    \item Student's t(3): Student's t distribution with 3 degrees of freedom,
\end{enumerate}
which are denoted as uncertainty modes 1, 2 and 3.

The normal distribution is a common random distribution, the log-normal distribution exhibits asymmetry, and the Student's t-distribution is another common symmetrical distribution. The properties of these three types of distributions effectively represent some frequently observed characteristics in real-world distributions.

To generate historical data, we employ the method proposed in \cite{serrano2024bilevel} to generate $X$ vector, by which $X\in R^{10}$ are drawn from a multivariate normal distribution with mean zero and covariance with entries $\sigma_{ij}=0.5^{|i-j|}$, for $i, j=0,\cdots,9$, while $p$ is obtained by uniformly sampling from $P$. Negative demands generated during the process are set as 0.

\subsection{Verification of the sample efficiency} \label{subsec::nume sample efficiency}
In this section, we verify the sample efficiency of kNN and LSA proposed in Theorem \ref{theo::chance-consistency}. The data size $N$ is increasing from 1000 to 100000. In the case of kNN, $\delta_{kNN}$ is set as 0.7. And in the case of LSA, since the influence of price is much larger than other context, the dimension of context can be approximately treated as 1. Hence $\delta_{LSA}$ is set as 0.2 to ensure that $(2d^{z} + 1)\delta_{LSA} = 0.6 < 1$. As a result, according to Theorem \ref{theo::chance-consistency}, in both cases, the approximation of the chance constraint should follow:
\begin{equation}\label{eq-supremegap_num}
    \sup_{z\in\mathcal{Z}, x\in\mathcal{X}}\! \left|g(z\mid X=x)\! -\! \hat{g}(z\mid X=x)\right|\! =\! O_P\left(\sqrt{\tfrac{\log N}{N^{0.4}}}\right).
\end{equation}

\begin{figure}[t]
    %\FIGURE
    {\includegraphics[scale=0.8]{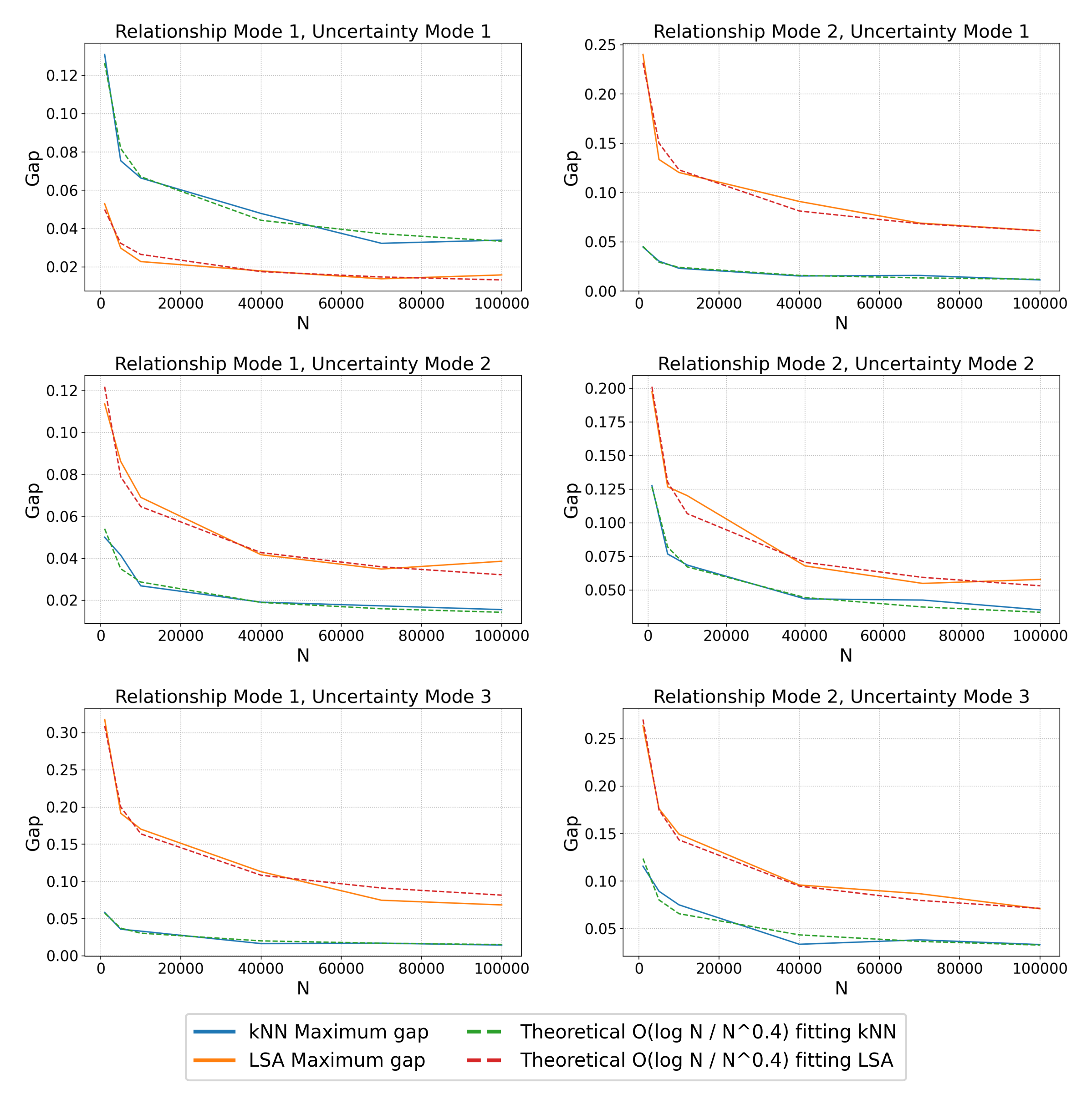}} % 图片内容
    \caption{\centering Sample Efficiency under Different Relationship and Uncertainty Modes.}\label{fig:Efficiency} % 图题
    {} % 备注说明
\end{figure}

In order to verify this conclusion, for each data size, we randomly select 20 different context-price pairs. For each context-price pair, we generate 10 sets of data to calculate the difference between the approximated probability and the true probability, and then take the average. We use the largest difference among these 20 context-price pairs as the approximation for the supreme gap in (\ref{eq-supremegap_num}).

The result is shown in Figure \ref{fig:Efficiency}, which shows the difference between the approximated probability and the true probability under different relationship and uncertainty modes. In the figure, the x-axis represents the data size, while the y-axis denotes the magnitude of the difference with different CCWs, namely kNN and LSA. The solid lines in the figure represent the actual sample efficiency curves, while the dashed lines are the curves obtained after fitting them with $\frac{\log N}{N^{0.4}}$.

The plotted curves illustrate the consistency between the experimental results and the theoretical predictions. The gap between the CCW approximation of the probability and the true probability descends roughly in $O_P\left(\frac{\log N}{N^{0.4}}\right) \in O_P\left(\sqrt{\frac{\log N}{N^{0.4}}}\right)$. This not only proves the correctness of Theorem \ref{theo::chance-consistency}, but also indicates that there might be other proof methods that can demonstrate this better sample efficiency.

To further verify Theorem \ref{theo::chance-consistency}, we choose both kNN and LSA as the weights, with relationship mode and uncertainty mode set to 1. We vary $\delta_{kNN}$ and $\delta_{LSA}$ to examine whether the sample efficiency aligns with the theoretical predictions. Figure \ref{fig:convergence_verification} presents the results. The top panel displays the convergence for the kNN approximation, while the bottom panel corresponds to the LSA approximation. The plots on the left shows the real sample efficiency plot, and the figure on the right shows the corresponding curves obtained after fitting them with the theoretical predictions. In both plots, the theoretical curves closely match the empirical errors, validating the rates predicted in Theorem \ref{theo::chance-consistency}.

\begin{figure}[htbp]
    \centering
    % Top Image: kNN
    \includegraphics[width=0.8\linewidth]{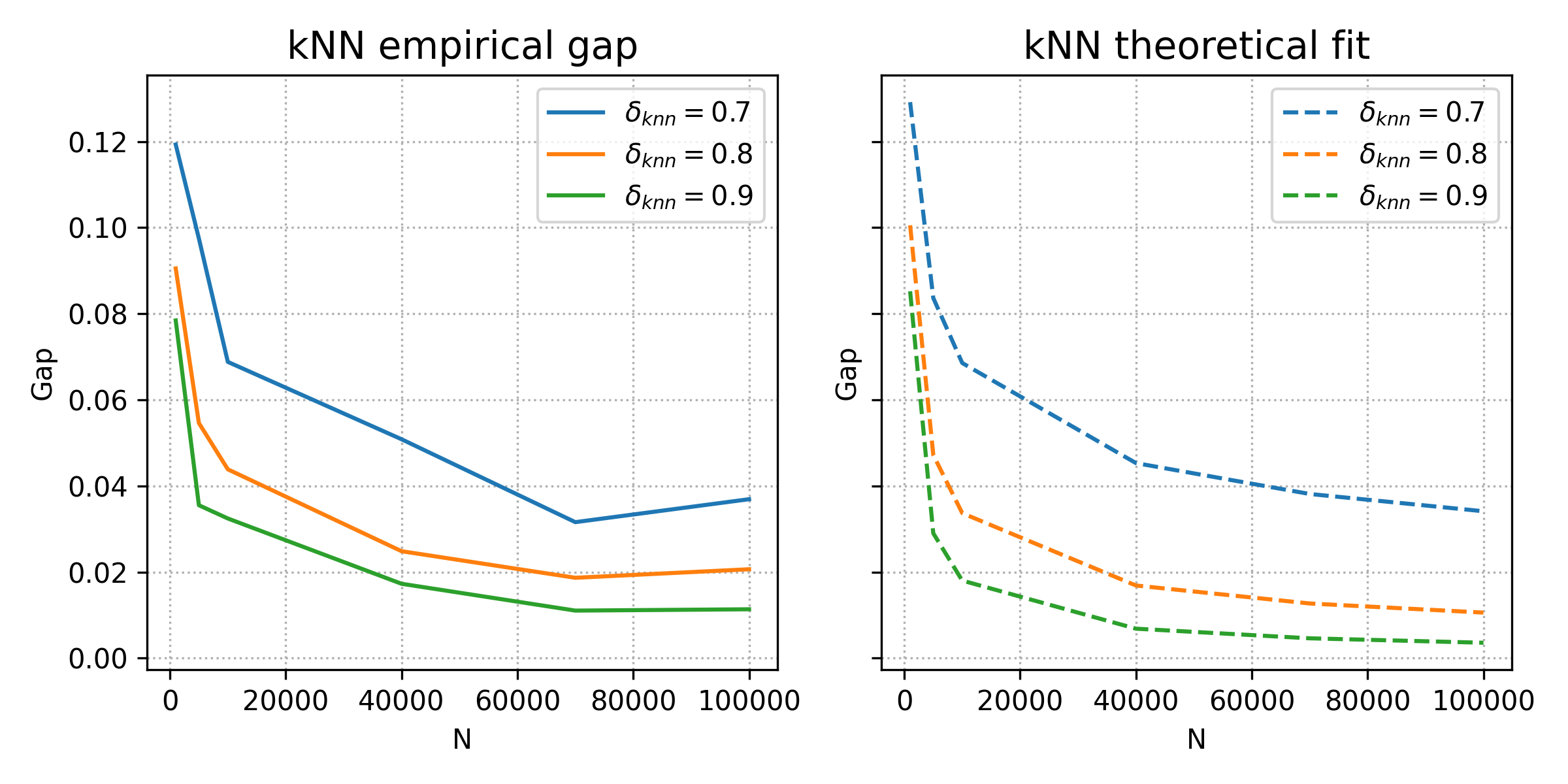}
    \vspace{0.2cm} % Small gap between images
    
    % Bottom Image: LSA
    \includegraphics[width=0.8\linewidth]{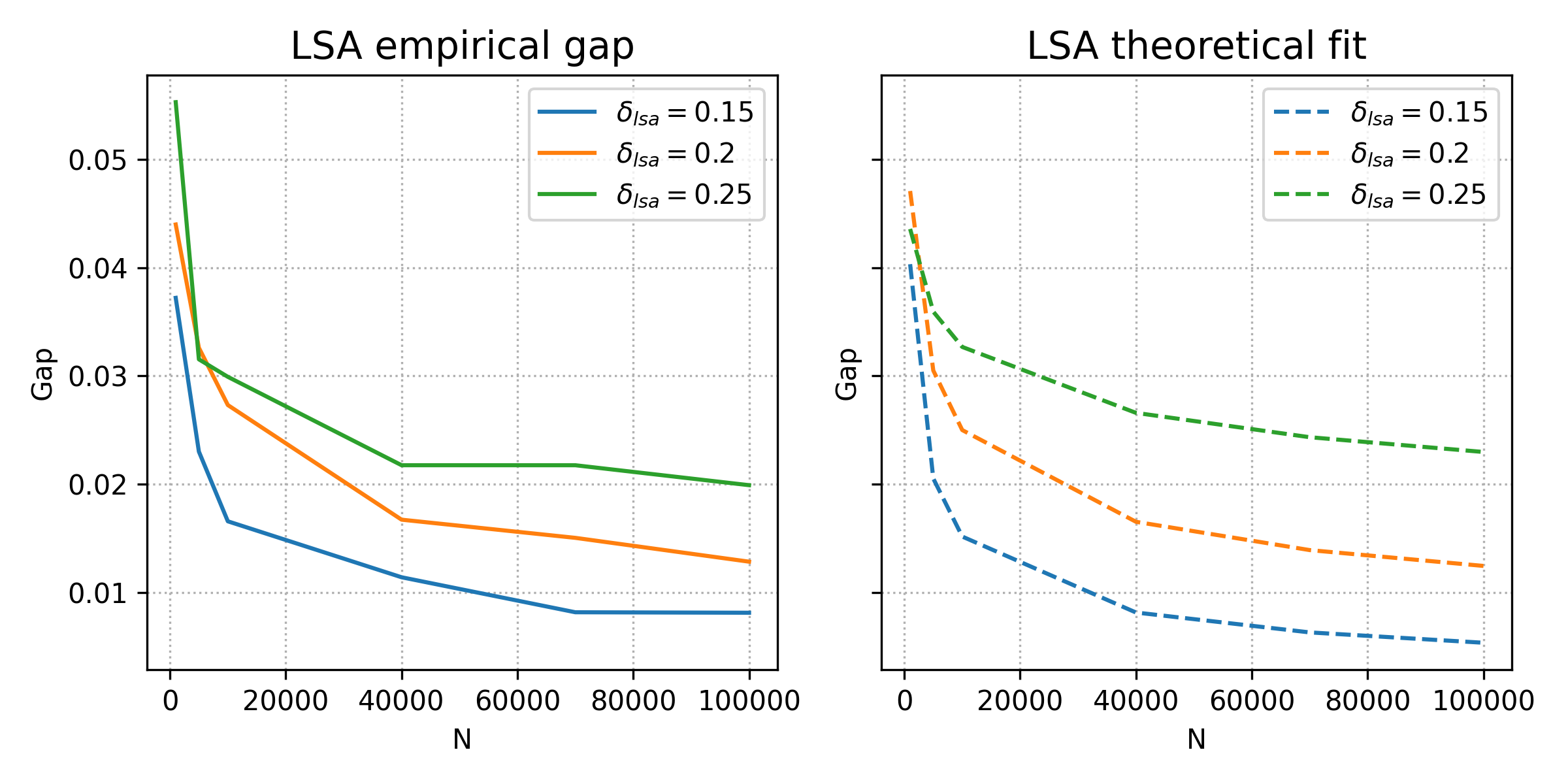}
    
    \caption{\centering Verification of sample efficiency}
    \label{fig:convergence_verification}
\end{figure}

\subsection{Comparison to parametric methods}\label{AccCompare}
In this section, we evaluate and compare the solution precision of parametric models with our proposed non-parametric model. This is done by assessing the discrepancy between the approximated optimal objectives provided by each model and the true objectives derived from the optimal solutions obtained, using the actual distribution of demand. Moreover, we measure and compare the quality of the optimal solutions delivered by each model using the true objective values as criteria. In addition, we compute and compare both the approximated and true probability for each model to evaluate how well they enforce the chance constraints. It should be noted that the same dataset, which was initially generated randomly, is utilized for this experiment.

Denote $S$ as the set of alternative regression models that maps from $P\times \mathcal{X}$ to $\mathbb{R}$. We first need to determine the best-performing model:
$$s^* = \arg\min_{s \in S} E_{\hat{P}_N}[\mathcal{D}(s(p, x), d)],$$
in which $(z, x, y) \sim \hat{P}_N$ is the empirical distribution constructed by $S_N$, $\mathcal{D}$ is a divergence function. We choose 20 regression models of three types, namely machine learning methods like linear regression, ensemble learning methods like light gradient boosting, and deep learning methods like CNN, as the set of alternative regression models. Models with smaller RMSE values are selected, which are lasso regression (las), lasso least angle regression (llar), and orthogonal matching persuit (omp) respectively. More detailed numerical information is reported in  \ref{app::parametric}. 

After completing the regression, the empirical distribution of the residuals exhibited by the regression model on the historical dataset is used as the approximated distribution of demand uncertainty, which is called residual-based distribution, commonly employed in research and practical work \citep{deng2022predictive, Sadana2025ContextualOptimizationSurvey}. Namely, for new context $x$, a decision is obtained through:
\begin{equation}\label{eq18}
    p^*(x) = \arg\min_{p\in \mathcal{P}} E_{y \sim P_\epsilon}[l(p, q; d)],
\end{equation}
in which $P_\epsilon$ is measured by the residual error on historical data $\{\epsilon_i\}$ where $\epsilon_i = d_i - s^*(p_i, x_i)$. Denote $\delta_{y}(x) = 1$ if $x=y$ and $0$ otherwise, then $P_\epsilon$ is given by:
\begin{equation}
    P_\epsilon(p, x) = \frac{1}{N} \sum_{i=1}^N \delta_{s^*(p, x) + \epsilon_i}.
\end{equation}
In this way, we build a conditional distribution of $d$ given $p$ and $x$.

Specifically, in this set of experiments, we select mode 1 (normal distribution) as the true distribution of the random variable, while keeping the other parameter combinations consistent with those in the previous section with $N=10000$. For each profit target $v$ and required chance level $\alpha$, we repeat it 10 times and take average on the results.% Comparisons between the actual realized loss and actual feasible rate of the optimal solution given by each model are reported in Table \ref{comparisontable}.

\begin{table}[t]
\TABLE
{Performance comparison between parametric methods and CCW approximation\label{comparisontable}}
{%
\begin{tabular}{ccllllllllllll}
\hline\up
\multirow{2}{*}{$v$} & \multirow{2}{*}{$\alpha$} & \multicolumn{6}{c}{Actual Realized Loss (CVaR)} & \multicolumn{6}{c}{Actual Feasible Rate} \\ \cline{3-14}
 & & kNN & LSA & CART & las & llar & omp & kNN & LSA & CART & las & llar & omp \\ \hline\up
\multirow{4}{*}{0} & 0.1 & -641.9 & -694.2 & -749.8 & -506.9 & -499.3 & -510.8 & 0.983 & 0.969 & 0.967 & 0.279 & 0.276 & 0.278 \\
 & 0.2 & -675.3 & -679.4 & -759.7 & -548.8 & -549.4 & -547.4 & 0.973 & 0.972 & 0.961 & 0.282 & 0.286 & 0.284 \\
 & 0.5 & -709.0 & -691.8 & -761.4 & -547.1 & -549.3 & -545.6 & 0.958 & 0.971 & 0.971 & 0.284 & 0.285 & 0.283 \\
 & 0.9 & -693.0 & -701.7 & -756.7 & -548.7 & -549.3 & -548.2 & 0.971 & 0.971 & 0.970 & 0.285 & 0.283 & 0.284 \\ \hline
\multirow{4}{*}{25} & 0.1 & -656.2 & -685.4 & -748.3 & -477.2 & -477.5 & -474.7 & 0.960 & 0.954 & 0.950 & 0.272 & 0.270 & 0.272 \\
 & 0.2 & -675.7 & -692.5 & -758.0 & -551.4 & -549.8 & -549.8 & 0.956 & 0.966 & 0.963 & 0.282 & 0.283 & 0.281 \\
 & 0.5 & -693.3 & -684.4 & -757.7 & -547.0 & -552.9 & -544.6 & 0.971 & 0.976 & 0.966 & 0.284 & 0.283 & 0.283 \\
 & 0.9 & -665.2 & -704.3 & -755.0 & -548.2 & -548.7 & -548.6 & 0.961 & 0.951 & 0.953 & 0.284 & 0.286 & 0.287 \\ \hline
\multirow{4}{*}{50} & 0.1 & -649.2 & -695.7 & -759.0 & -439.5 & -440.2 & -442.6 & 0.976 & 0.955 & 0.949 & 0.264 & 0.263 & 0.264 \\
 & 0.2 & -671.2 & -696.4 & -756.5 & -548.3 & -547.1 & -546.6 & 0.969 & 0.955 & 0.951 & 0.283 & 0.284 & 0.285 \\
 & 0.5 & -671.6 & -686.3 & -748.5 & -555.2 & -555.3 & -553.4 & 0.977 & 0.974 & 0.969 & 0.282 & 0.286 & 0.283 \\
 & 0.9 & -692.8 & -685.0 & -752.0 & -549.6 & -547.8 & -546.8 & 0.959 & 0.952 & 0.941 & 0.283 & 0.282 & 0.282 \\ \hline
\multirow{4}{*}{100} & 0.1 & -685.6 & -679.5 & -750.9 & -372.6 & -368.1 & -373.5 & 0.940 & 0.938 & 0.923 & 0.246 & 0.244 & 0.249 \\
 & 0.2 & -686.5 & -689.9 & -757.0 & -539.6 & -536.1 & -534.8 & 0.954 & 0.943 & 0.934 & 0.282 & 0.281 & 0.279 \\
 & 0.5 & -685.5 & -686.7 & -755.5 & -550.7 & -550.0 & -553.9 & 0.927 & 0.943 & 0.930 & 0.283 & 0.283 & 0.284 \\
 & 0.9 & -710.4 & -712.0 & -747.6 & -548.8 & -550.4 & -549.2 & 0.941 & 0.936 & 0.948 & 0.285 & 0.285 & 0.284 \\ \hline
\end{tabular}%
}
{} % Notes 
\end{table}

Table \ref{comparisontable} compares the actual realized loss and actual feasible rate of the optimal solution given by each model. 
The actual feasible rate is the actual probability of $l(p, q; d) \le -v$. Hence the actual feasible rate should be greater than $1-\alpha$. And since the parametric methods sometimes give infeasible solutions, we use the actual realized loss, which is the expectation of $l(p, q; d)$ when it is lower than the profit target $-v$ as a comparison metric (Namely, conditional value at risk (CVaR) when profit is greater than $v$). From the results in Table \ref{comparisontable}, it can be observed that, first, regardless of the combination of \(v\) and \(\alpha\), the optimal solution obtained by our proposed method provides decision-makers with more stable actual returns, with an average improvement of 31\%, 33.6\% and 45.8\% respectively for kNN, LSA and CART. Second, the solution we provide demonstrates greater stability in satisfying the constraints compared to parametric methods. These results indicate that the proposed method ensures both the quality and robustness of the solutions.

\subsection{Performance of Algorithmic Efficiency}\label{subsec::SpeedTest}

When using kNN weight, the wSAA of PSNP can be transformed into a MILP that can be solved by off-the-shelf solvers. Therefore, we use this situation further illustrate the effectiveness our solution framework.

In this experiment, the VaR parameters are set as $\alpha=0.2, v=100$, with relationship mode and uncertainty mode both chosen as 1. All experiments are performed using only one core of the server to ensure a fair comparison of algorithmic efficiency.

\begin{figure}[htbp]
    \includegraphics[width=1\linewidth]{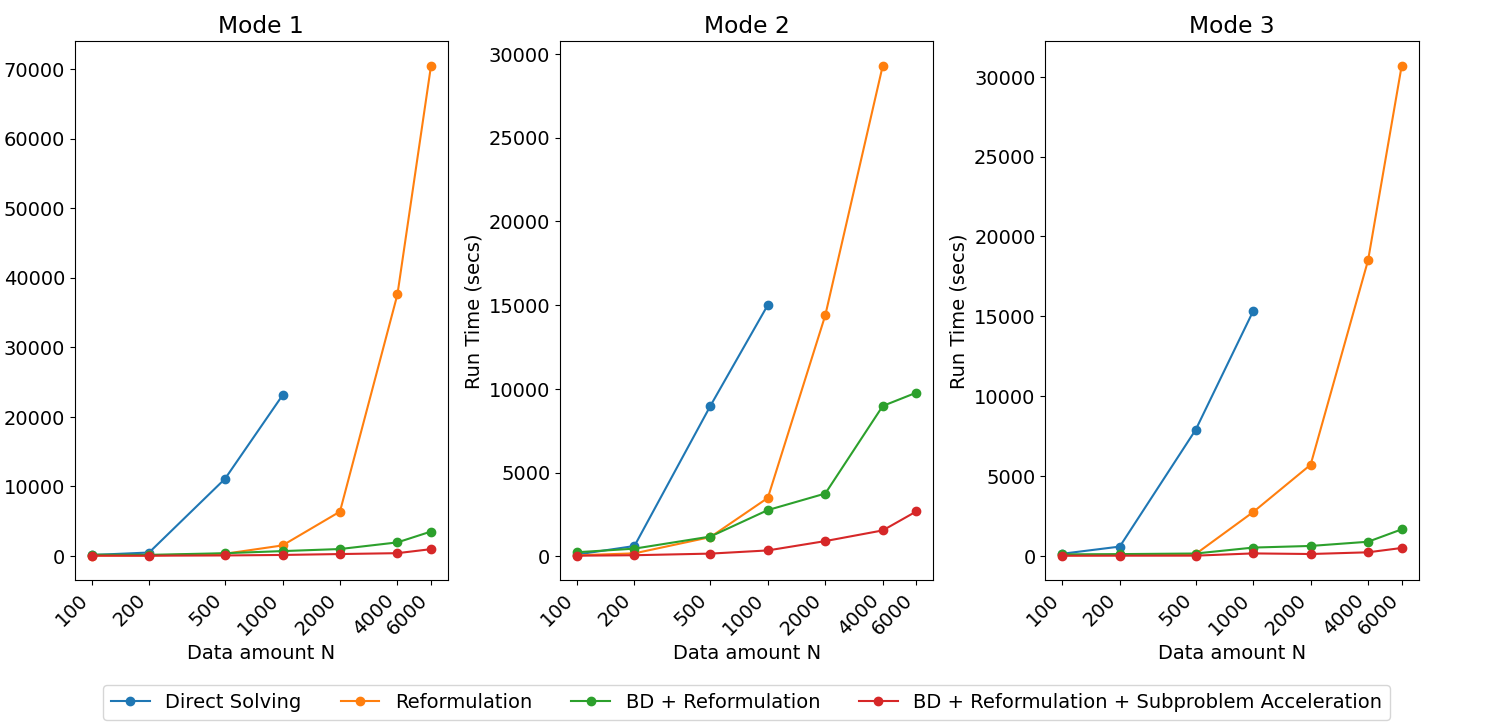}
    \caption{\centering Comparison of the computation time of four solution strategies.}
    \label{fig:speed_test}
\end{figure}

We evaluate and compare the performance of four distinct solution strategies:
\begin{enumerate}
    \item \textbf{Direct Solving:} This serves as the baseline approach. It involves directly solving the original mixed-integer linear program using an off-the-shelf solver without any structural exploitation.
    
    \item \textbf{Reformulation:} This method is based on the reformulation introduced in Sections \ref{sec::reformulation of CF} and \ref{RoCC}. It involves the pre-calculation of the decision-dependent weights, and the chance constraints are reformulated into deterministic intervals. This simplifies the problem structure significantly compared to the original formulation but is still solved as a single MILP.
    
    \item \textbf{BD+Reformulation:} This method builds upon the second method by applying the Benders Decomposition framework directly. It decomposes the reformulated problem into a master problem and convex subproblems, iteratively generating cuts to converge to the optimal solution.

    \item \textbf{BD + Reformulation + Subproblem Acceleration} This method builds upon the third method by utilizing complementary slackness to accelerate the Benders subproblem with the analytical solution proposed in Section \ref{RoCC}, detailed in  \ref{app::benders_newsvendor}.
\end{enumerate}

Figure \ref{fig:speed_test} compares the computation time of these four methods as the sample size $N$ increases.
The results reveal a clear hierarchy in computational efficiency. The first approach exhibits the steepest growth in computation time, quickly becoming inefficient as the data scale expands. The second method provides a substantial improvement over the baseline by simplifying the constraints and fixing the weights, thus reducing the solver's workload.
The third method works significantly better when the sample size grows large. By decoupling the uncertainty-affecting decisions from the continuous uncertainty-independent subproblems, it effectively mitigates the complexity growth, maintaining a low and stable computation time. And the fourth method demonstrates the superior performance among the four. This confirms that combining structural reformulation with algorithmic decomposition and acceleration yields the most significant speedup for CCW approximation.

\section{Case Study}\label{sec::case}
We demonstrate the performance of the proposed framework under a real case. A transaction-level dataset sourced from JD.com, one of China's premier retailers, for the 2020 MSOM Data-Driven Research Challenge \citep{2024JD} is employed as the real-world scenario. We model this case using price-setting newsvendor problem with profit target constraint. The effectiveness of the framework is validated through a comparative analysis involving the proposed framework and the company's original decision-making approach. 

% \subsection{Dataset preparation} \label{dsbg}
After transition, the data ultimately used as covariates and uncertain demand for the case study are described in Table~\ref{tab::real-data-description}. The detailed content of data transition can be found in  \ref{app::caseData}.

\begin{table}[t]
\TABLE
{Description of JD sales data\label{tab::real-data-description}}
{%
\begin{tabular}{ccp{20em}p{14em}}
\toprule\up
Variable & Type & Description & Statistics \\ 
\midrule
skuID  & string & The IDs corresponding to different types of SKUs & 90 types in total, which are then further divided into dummy variables. \\
Original\_unit\_price & float & The original price marked on the SKU & min 19.8, median 89, max 336. \\
Final\_unit\_price & float & The average unit price on that day & min 8.9, median 63.2, max 224.7. \\
Attribute1 & int & The first key attribute of the SKU & min 1, median 2, max 4. \\
Attribute2 & int & The second key attribute of the SKU & min 30, median 60, max 100. \\
Weekend & int & 1 means the sales date is a weekend, 0 otherwise & - \\
Quantity & int & Daily sales volume of this type of SKU & min 0, median 53, max 2765. \\ 
\bottomrule
\end{tabular}%
}
{}
\end{table}

Since the final prices of the goods range from 20 RMB to 120 RMB, through our visits and investigations of merchants, it is extremely unlikely that JD self-run items would be out of stock. Therefore, we make the assumption that once the price is acceptable for the customer, the market can respond to customer demands at any time. That is, the daily sales volume for each type of SKU in the dataset equals to the customer demand for that SKU in the corresponding day at current prices. This enables us to treat sales volume as demand in later process.

We conduct a regression analysis using machine learning methods to examine the relationship between demand and these covariates. Some SKUs demonstrate excellent performance in the best regression models. This part is elaborated in detail in  \ref{app::caseData}. This section focuses on 14 SKUs with favorable prediction performance (R² of 0.88 or above) to ensure that the true relationship between demand, prices and covariates is well established. Meanwhile, the determination of SKU cost and salvage value can be found in  \ref{app::caseParam}.

To illustrate the improvement that our method can bring about in practical applications, we divide the sales records of these 14 SKUs into a training set and a test set at a ratio of 0.8:0.2. Subsequently, we compare the expected revenue and the average feasible rate of the solutions offered by our method on the test set with those of the original solutions in the sales records. Since the size of the training set at this time is not large ($<400$), to ensure the usability of the method, we use kNN weight to guarantee the effectiveness of the method (that is, to avoid the error of division by zero). 

The result is shown in Table \ref{tab::comp_real_case}. The ``Loss Ratio" column in the table represents the ratio of the CVaR corresponding to $p_{CCW}$, $q_{CCW}$ to that of $p_o$ and $q_o$, where ($p_o$, $q_o$) is the original price and quantity in the dataset, and ($p_{CCW}$, $q_{CCW}$) is the optimal solution of the approximation with CCW. Since the objective value is negative, if ratio is greater than 1, it indicates that the new solution is better than the original one. The ``Original Feasible Rate" column shows the average rate of $P_D[l(p_o, q_o, D) \le -v \mid X=x]$ in the test set. And the ``CCW Feasible Rate" column shows the average rate of $P_D[l(p_{CCW}, q_{CCW}, D) \le -v \mid X=x]$ in the test set.

\begin{table}[t]
\TABLE
{Comparison of the CCW solution and the original solution\label{tab::comp_real_case}}
{%
\begin{tabular}{c|ccc|ccc|ccc}
\toprule
\multirow{2}{*}{$\alpha$}&\multicolumn{3}{c|}{$v$=0}&\multicolumn{3}{c|}{$v$=5}&\multicolumn{3}{c}{$v$=10}\\
\cline{2-10}\up
 & LR & OFR & CFR & LR & OFR & CFR & LR & OFR & CFR \\
\midrule
0.1 & 6.242 & \multirow{4}{*}{0.889} & 0.876 & 2.825 & \multirow{4}{*}{0.879} & 0.862 & 1.995 & \multirow{4}{*}{0.880} & 0.857 \\
0.2 & 6.188 & & 0.876 & 2.825 & & 0.862 & 1.996 & & 0.858\\
0.5 & 6.031 & & 0.883 & 2.832 & & 0.868 & 2.073 & & 0.864\\
0.9 & 4.549 & & 0.912 & 2.292 & & 0.903 & 1.732 & & 0.902 \\
\bottomrule
\end{tabular}%
}
{\centering LR=Loss Ratio, OFR = Original Feasible Rate, CFR = CCW Feasible Rate}
\end{table}

% \textcolor{red}{Note that it is abnormal that the feasible rate corresponding to the original solution actually increases when $v$ varies from 5 to 10. }This is because when $v$ increases and $\alpha$ is high, in some cases within the test set, there is actually no feasible solution. Therefore, in order to maintain the validity and consistency of the comparison, these cases will be excluded from this set of experiments. This further led to an increase in the Feasible Rate.

From the experimental results, it can be seen that: 
\begin{enumerate}
    \item Regardless of the value of $v$, the solution obtained by the CCW approximation achieves a significant average improvement in the profit, with the minimum average loss ratio being $1.73$.
    
    \item The CCW approximation method can flexibly adjust the pricing and order quantity without violating the opportunity constraints, and optimize the profit while ensuring feasibility. It can be observed that when $\alpha = 0.9$, although the loss given by the CCW approximation method is significantly higher compared to the previous groups, the feasible rate can still maintain a value greater than 0.9, demonstrating the robustness of the method.
\end{enumerate}

Next, we take this set of experiments with $v = 10$ as an example to examine the specific situation of the improvement in performance brought about by the CCW approximation. In Figure~\ref{fig:Ratio}, the histogram of the ratio of the actual realized loss at the final stage to the actual realized loss of the original solution is presented. It is observable that in this circumstance, the overall average income has risen by 96\%, and in 88.4\% of instances, there is an improvement in objective. This evidently manifests that in actual scenarios, our approach can reliably effect an enhancement compared to experience-based decisions.

\begin{figure}[t]
    %\FIGURE
    \centering
    {\includegraphics[trim=0 0.5cm 0 1.5cm, clip, width=0.8\textwidth]{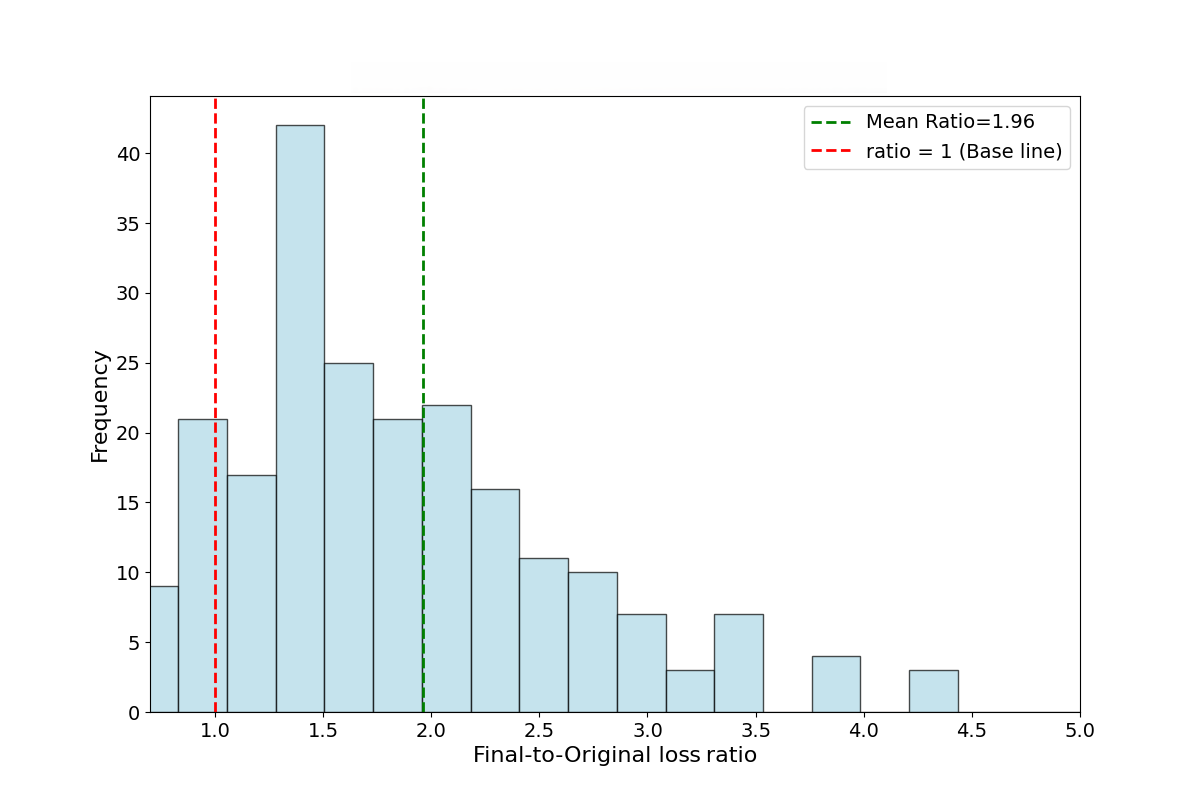}} % 图片内容
    \caption{\centering Histogram of Final-to-Original Loss Ratios. \label{fig:Ratio}} % 图题
    {} % 空的备注
\end{figure}

\section{Conclusion}\label{sec::conclusion}

In this work, we present a non-parametric framework to address CCCP-DDU, overcoming fundamental challenges in approximation and computation. By introducing the CCW approximation, we demonstrate its strong uniform consistency and derive its sample efficiency. We propose a linearization reformulation to convert the original nonlinear program into a linear program, which can be directly solved using off-the-shelf solvers. Additionally, we identify a convexity condition to develop a more efficient solution algorithm. Theoretical results, extensive simulations, and a real-world case study confirm both the validity and practical value of the proposed method.

Looking forward, several directions appear promising for extending this research. First, when handling contextual information, standardizing variables with higher impact on uncertainty to a larger scale may improve approximation accuracy. Developing a systematic quantification method to capture and leverage these influences would further enhance robustness. Second, though the asymptotic theorems work for general compact and bounded $\mathcal{Z}_1$, the current solution framework guarantees convergence only when $\mathcal{Z}_1$ is discrete. Extending the approach to continuous or hybrid domains could significantly broaden its applicability. Thirdly, beyond the three weights mentioned in this work, there may still be other weights with excellent properties to be discovered.

% Thirdly, in real life applications, it's hard to guarantee that the historical $z_1$ spread everywhere for all $x$ since all decision makers have their own decision interests (instead of randomly choosing $z_1$). As a result, the approximation might be biased. Future work may explore this situation and give an improved method.

% And lastly, in many modern applications, preserving privacy is critical. Future work may explore how to design frameworks where, given the decision, an outside observer cannot directly infer the underlying context. Incorporating privacy-aware mechanisms would expand the framework’s relevance in practice.

%Overall, this study provides a foundation for non-parametric approaches in decision-dependent chance constraint programming. By bridging theory and computation, we open pathways for more general, accurate, and privacy-conscious optimization under uncertainty.

% Acknowledgments here
\ACKNOWLEDGMENT{
% We would like to express our sincere gratitude to [acknowledge individuals, organizations, or institutions] for their invaluable contributions to this research. We are also grateful to [mention any additional acknowledgements, such as technical assistance, data providers, or colleagues] for their support and assistance throughout the course of this work.
The research is partially supported by the National Natural Science Foundation of China (Grant no. 72171129, 72188101, 92467302, 72250710683)
}

% References here (outcomment the appropriate case)

% CASE 1: BiBTeX used to constantly update the references
%   (while the paper is being written).
\bibliographystyle{informs2014}
\bibliography{reference.bib}

%\bibliographystyle{informs2014} % outcomment this and next line in Case 1
%\bibliography{sample} % if more than one, comma separated

% CASE 2: BiBTeX used to generate mypaper.bbl (to be further fine tuned)
%\input{mypaper.bbl} % outcomment this line in Case 2

% %\THEEndNotes
% \begingroup \parindent 0pt \parskip 0.0ex \def\enotesize{\normalsize} \theendnotes \endgroup

% Appendix here
% Options are (1) APPENDIX (with or without general title) or
%             (2) APPENDICES (if it has more than one unrelated sections)
% Outcomment the appropriate case if necessary
%
% \begin{APPENDIX}{<Title of the Appendix>}
% \end{APPENDIX}
%
%   or
%
\newpage

% --- 重置计数器 ---
\setcounter{proposition}{0}
\setcounter{lemma}{0}
\setcounter{figure}{0}
\setcounter{table}{0}
\setcounter{equation}{0}
\setcounter{definition}{0}

% --- 重定义编号格式 ---
% 格式变成：EC.1, EC.2 ...
% \renewcommand{\theproposition}{EC.\arabic{proposition}}
% \renewcommand{\thelemma}{EC.\arabic{lemma}}
% \renewcommand{\thefigure}{EC.\arabic{figure}}
% \renewcommand{\thetable}{EC.\arabic{table}}
% \renewcommand{\theequation}{EC.\arabic{equation}}
% \renewcommand{\thedefinition}{EC.\arabic{definition}}

\ECSwitch
\ECHead{Electronic Companion}
\renewcommand{\baselinestretch}{1} % 强制单倍行距
\small                        % 确保字号正常（防止被重置为大号字）
\selectfont                        % 立即应用字体和行距更改

\section{LSA matching procedure}\label{app::LSA}
This section discusses the reformulation and acceleration of LSA cluster identification. The core task of LSA cluster construction is to identify, for a given pair $(z_1,x)$, all historical observations whose distance to $(z_1,x)$ is no greater than a fixed bandwidth $h$. A brute-force implementation based on exhaustive pairwise distance comparisons between all candidate $(z_1,x)$ pairs and historical samples is computationally expensive. 

To accelerate this matching process, we employ a k-d tree to efficiently perform radius-based neighbor search. Specifically, a single k-d tree is constructed offline using the historical dataset
$S_N=\{(z_{1i},x_i,y_i)\}_{i=1}^N$, where the tree is built on the joint $(z_1,x)$ space and the outcome variable $y_i$ is not involved in the construction. The k-d tree is generated using a standard balanced construction procedure, which recursively partitions the data along coordinate axes according to the splitting rules of the k-d tree algorithm. Once constructed, this tree is reused for all candidate values $z_1\in\mathcal{Z}_1$ with a fixed contextual vector $x$.

For query point $(z_1,x)$, the k-d tree enables efficient identification of all historical points within Euclidean distance $h$ by pruning regions of the search space that cannot satisfy the distance condition. Compared to brute-force matching with computational complexity $O(dNN_1)$, where $d$ is the dimension of $(z_1,x)$, the k-d tree–based procedure reduces the expected cost to $O((N+N_1)\log N)$, where $k$ denotes the number of points returned by the radius query.

By constructing the k-d tree, the LSA matching procedure avoids repeated full-distance scans over the historical dataset, thereby substantially improving computational efficiency.

\section{A Solution Framework based on Benders Decomposition} \label{BDSF}

This section presents the solution framework based on Benders decomposition. We separate $z_1$ and $z_2$ in the decomposition, solving $z_1$ in the master problem and $z_2$ in the subproblem. The master problem is given as follows:
%Master problem ($MP_r$):
\begin{equation}\label{eq-master reform}
    \begin{aligned}
       \text{Master problem ($MP_r$):} \quad\quad\quad\quad\min \quad &e,\\
        s.t.\quad &\sum_{t=1}^{N_1} g_t=1, z_1 = \sum_{t=1}^{N_1} g_t z_1^t,\\
        &\mathcal{K}_i = \sum_{t=1}^{N_1} g_t \mathcal{K}_i^t,\quad i=1,\cdots,N,\\
        &e \text{ satisfies BD}_t, \quad z_1^t\in \mathbb{M}_r,\\
        &z_1 \in \mathcal{Z}_1, \mathcal{K}_i\in \{0,1 \},\quad i=1,\cdots,N,\\
        &g_t \in \{0, 1\}, t=1, \cdots, N_1,
    \end{aligned}
\end{equation}
where $e$ is a virtual variable that represents the original objective function. The Benders cut $BD_t$ is generated by solving the corresponding subproblem and its dual when $z_1=z_1^t$. $\mathbb{M}_r$, which is initially empty, is the set of $z_1$ that have entered the subproblem and are used to generate optimality cuts in iteration $r$. The optimal value of the master problem, a relaxed form of the original problem, provides a lower bound for the optimal objective of the original problem.

The master problem specifies the current optimal $z_{1r}$, the cluster $\mathcal{K}_r=(\mathcal{K}_{1r}, \cdots,\mathcal{K}_{Nr})$, and the feasible region for $z_2$, namely $F(z_{1r})$, which will be used in the corresponding subproblem formulated below.
%Subproblem ($SP_r$):
\begin{equation}
    \begin{aligned}
       \text{Subproblem ($SP_r$):} \quad\quad\quad\quad\min\quad&\frac 1 k\sum_{i=1}^N \mathcal{K}_{ir} \cdot [l((z_{1r}, z_2), y_i)],\\
        s.t.\quad &z_2 \in F(z_{1r}) \cap \mathcal{Z}_2.
    \end{aligned}
\end{equation}\label{subproblem}

The optimal value for the subproblem, a realized initialization of the original problem, gives an upper bound of the optimal objective of the original problem. Note that $\mathcal{K}_r$ in the subproblem is fixed since $z_{1r}$ is fixed. Therefore, the objective function of the subproblem is a linear combination of $l((z_{1r}, z_2), y_i)$. If $l((z_{1r}, z_2), y_i)$ is a convex function of $z_2$ regardless of $y_i$, $\mathcal{Z}_2$ is convex, and the conditions for Lemma \ref{lem::cc} holds, then the subproblem is a convex problem. Its dual problem can be derived through lagrangian multiplier method. For example, assume that $F(z_{1r})$ can be expressed as $\bar{F}(z_{1r}, z_2) \le 0, z_2 \in \mathcal{Z}_2$. Then the dual problem can be formulated as:
\begin{equation}\label{eq-dual}
\begin{aligned}
    \max_{\lambda\ge 0} &\inf_{z_2 \in \mathcal{Z}_2} 
    \Bigg( 
        \frac{1}{k} \sum_{i=1}^N \mathcal{K}_{ir}\cdot [l((z_{1r}, z_2), y_i)] 
        + \lambda\cdot \bar{F}(z_{1r}, z_2) 
    \Bigg)= \max_{\lambda\ge 0} g_d(z_{1r}, \mathcal{K}_r, \lambda).
\end{aligned}
\end{equation}

The form of the inner infimum, $g_d(z_{1r}, \mathcal{K}_r, \lambda)$, depends on the specific formulation of function $l$ and $\bar{F}$. Once the optimal $\lambda^*$ of problem (\ref{eq-dual}) is derived, $z_{1r}$ can enter $\mathbb{M}$, and the benders cut $BD_t$ can be given by:
\begin{equation}\label{eq-BDcut-general}
    \text{BD}_t: e \ge g_d(z_1,\mathcal{K}, \lambda^*).
\end{equation}

By iteratively solving the master problem, the corresponding subproblem and dual-subproblem, the upper and lower bounds of the optimal objective value will gradually converge. An overall algorithm is presented in Algorithm \ref{algo::accBD}.
\begin{algorithm}[htpb]
\footnotesize
\caption{\footnotesize BD Algorithm for DD-CCP with CCW approximation}
\label{algo::accBD}
%\vskip6pt
\begin{algorithmic}
\Procedure{AcceleratedBD}{$x, \mathcal{Z}, S_N$}
    \State \textbf{Input:} $x \in X$, $\mathcal{Z}$, $S_N = \{(z_1, x_1, y_1), \cdots, (z_N, x_N, y_N)\}$
    \State \textbf{Output:} Optimal decisions $z_1$ and $z_2$
%    \Statex
%
    \State Initialize: $UB \gets +\infty$, $LB \gets -\infty$, optimality gap $\epsilon$, iteration $r \gets 0$
    \State Pre-calculate the cluster: obtain $F(z_1^t), \mathcal{K}_i^t$ for all $z_i^t$
    \State Randomly choose a feasible $z_1 = z_{10}$
 %   \Statex
%
    \While{$\frac{UB-LB}{LB} \ge \epsilon$}
        \State Solve the replaced master problem (\ref{eq-master reform}), obtain $z_{1r}, g, \mathcal{C}, F(z_1)$, and the objective $L_r^{MP}$
        \State Update $\mathbb{M} \gets \mathbb{M} \cup \{z_{1r}\}$, $LB \gets \max \{LB, L_r^{MP}\}$
        \State Solve the subproblem $SP_r$, obtain the objective $L_r^{SP}$, update $UB \gets \min \{UB, L_r^{SP}\}$
        \State Solve the dual-subproblem, obtain dual variables, and add the optimality Benders Cut (\ref{eq-BDcut-general}) into the master problem
        \State $r \gets r+1$
    \EndWhile
\EndProcedure
%\Statex
\end{algorithmic}
\end{algorithm}

\section{PSNP with SL constraint} \label{app::PSNP SL}
We consider the PSNP with a service level constraint. The service level constraint is,
\begin{equation} \label{eq-sl}
    g_s(p,q \mid X=x) = \mathbb{P}_D[q \ge D \mid X=x] \ge 1- \alpha_s.
\end{equation}
It requires the probability of stockouts is less than $\alpha_s$. And the service level constraint can be approximated with,
\begin{equation}\label{Est-sl}
\begin{aligned}
    \hat{P}_N^s(x) &= \{p\in P,\ 0 \le q \le \bar{q}: \hat{g}_N^s(p,q\mid X=x)=\sum_{i=1}^N w_i(p,x) \mathbb{I}\{q \ge d_i\} \ge 1 - \alpha_s\}.
\end{aligned}
\end{equation}

From its geometric meaning, it can be concluded that \eqref{Est-sl} requires the order quantity to be greater than or equal to the demand of the $\lfloor k\alpha_s \rfloor$-largest demand in the cluster. 

% \section{(Original) PSNP with SL constraint} \label{app::PSNP SL}
% And the service level constraint can be written as,
% \begin{equation} \label{eq-sl}
%     g_s(p,q \mid X=x) = \mathbb{P}_D[q \ge D \mid X=x] \ge 1- \alpha_s.
% \end{equation}
% It requires the probability of stockouts is less than $\alpha_s$.

% And the service level constraint can be approximated with,
% \begin{equation}\label{Est-sl}
% \begin{aligned}
%     \hat{P}_N^s(x) &= \{p\in P,\ 0 \le q \le \bar{q}: \hat{g}_N^s(p,q\mid X=x)\\
%     &=\sum_{i=1}^N w_i(p,x) \mathbb{I}\{q \ge d_i\} \ge 1 - \alpha_s\}.
% \end{aligned}
% \end{equation}

% Regarding to the case with the service level constraint, as we mentioned in Section \ref{RoCC}, we have Proposition \ref{prop::SL_ss}.

% \begin{proposition}[\textbf{Est-SL reformulation}] \label{prop::SL_ss}
%     For all $p$, denote $D^s_i = \mathcal{K}_id_i$ for all i. Sort $D^s_i$ as $(D^s_{(1)},$ $D^s_{(2)}$, $\cdots$, $D^s_{(N)})$, where $D^s_{(1)} \ge D^s_{(2)} \ge \cdots \ge D^s_{(N)}$. When we approximate the chance constraint (\ref{eq-sl}) with CCW and $\alpha_s < 1$, the approximated chance constraint (\ref{Est-sl}) is equivalent to $q \ge D^s_{(\lfloor k\alpha_s \rfloor )}$. And when $\alpha_s=1$, constraint (\ref{Est-sl}) can be directly removed.
% \end{proposition}

% And in the case of SL constraint, $q_{min}=D^s_{(\lfloor k\alpha_s \rfloor)}$, there is no $q_{max}$. 

\section{Solving approximated PSNP with Benders Decomposition}\label{app::benders_newsvendor}

We detail the Benders Decomposition framework for the Price-Setting Newsvendor Problem (PSNP). 

\subsection{Master Problem Formulation}
After the pre-calculation process where the cluster information and the approximated chance constraint intervals are solved for all candidate prices $p \in P$, the Master Problem (MP) selects the optimal price candidate. We introduce binary variables $g_t \in \{0, 1\}$ to indicate the selection of the $t$-th candidate price $p^t$.
The MP is formulated as follows:
\begin{equation}\label{eq-master reform-PSNP}
    \begin{aligned}
        \min_{e, g, p, q_{min}, q_{max}}\quad &e, \\
        s.t.\quad & \sum_{t=1}^{N_P} g_t = 1, \\
        &p = \sum_{t=1}^{N_P} p^tg_t,\ \mathcal{K}_i = \sum_{t=1}^{N_P} \mathcal{K}_i^t g_t, \quad \forall i=1,\dots,N, \\
        &q_{min} = \sum_{t=1}^{N_P} q_{min}^t g_t,\ q_{max} = \sum_{t=1}^{N_P} q_{max}^t g_t,\\
        &q_{min} \le q_{max} \le \bar{q},\\
        &e \ge \text{Benders Cuts}_k, \quad k=1, \dots, K_{iter}, \\
        &g_t\in \{0,1 \}, \quad t=1,\cdots,N_P.
    \end{aligned}
\end{equation}
where, $e$ is a scalar variable representing the approximated loss, and the constraints ensure that exactly one price candidate is selected along with its corresponding cluster weights and feasible interval bounds.

\subsection{Primal Subproblem}
Given a fixed price $p$ (and determined by the MP solution $\hat{g}_t$), the Subproblem ($SP(p)$) minimizes the weighted expected loss by determining the optimal order quantity $q$. Based on the reformulation in Section \ref{subsec::PSNP RoCC} in the main context, the feasible region is the interval $[q_{min}, q_{max}]$.
The formulation is:
\begin{subequations}\label{PSNP-subproblem}
    \begin{align}
        \min_{q, y}\quad & \mathcal{Q}(p) = \frac{1}{K}\sum_{i=1}^N \mathcal{K}_i[-(p-c)q+(p-s)y_i],\\
        s.t.\quad &y_i \ge q - d_i, \quad i=1,\cdots,N, \label{eq-sub-y}\\
        &q_{min}\le q \le q_{max}, y_i \ge 0.\label{eq-sub-qbound}%\\
    %    &y_i \ge 0.
    \end{align}
\end{subequations}
where $y_i = \max\{q-d_i, 0\}$ captures the lost sales/excess inventory logic. As established in Proposition \ref{prop::optimal_solution}, the optimal solution $q^*$ can be obtained : $q^* = \min\{\max\{q_{min}, d_{(n^*)}\}, q_{max}\}$, where $d_{(n^*)}$ is the unconstrained fractile solution derived from sorting the demands.

\subsection{Dual Subproblem and Cut Generation}
Since the primal subproblem is a linear program, strong duality holds. We formulate the Dual Subproblem ($DSP(p)$):
\begin{subequations}\label{eq-dual-PSNP}
    \begin{align}
        \max_{\lambda}\quad &-\sum_{i=1}^N \lambda_{1i}d_i + \lambda_2 q_{min} - \lambda_3 q_{max},\\
        s.t.\quad &\sum_{i=1}^N \lambda_{1i}-\lambda_2+\lambda_3 = \frac{1}{K}\sum_{i=1}^N \mathcal{K}_i(p-c), \label{eq-dual-q}\\
        &\lambda_{1i} \le \frac{\mathcal{K}_i}{K}(p-s),\quad i=1,\cdots,N, \label{eq-dual-yi}\\
        &\lambda_{1i}, \lambda_2, \lambda_3 \ge 0,\quad i=1, \cdots, N,
    \end{align}
\end{subequations}
where $\lambda_{1i}$ corresponds to constraint \eqref{eq-sub-y}, and $\lambda_2, \lambda_3$ correspond to the lower and upper bounds in \eqref{eq-sub-qbound}, respectively.

%\paragraph{Recovering Dual Variables via Complementary Slackness.}
Instead of solving the dual LP, we recover the optimal dual variables $\lambda^*$ directly from the primal optimal $q^*$ using complementary slackness conditions:
\begin{enumerate}
    \item \textbf{For $\lambda_{1i}$:} 
    \begin{itemize}
        \item If $d_i < q^*$, then $y_i = q^*-d_i > 0$. The primal constraint is non-binding regarding non-negativity, so the dual constraint \eqref{eq-dual-yi} must be binding: $\lambda_{1i}^* = \frac{\mathcal{K}_i}{K}(p-s)$.
        \item If $d_i \ge q^*$, then $y_i = 0$. We typically set $\lambda_{1i}^* = 0$ (or determined by the balance equation).
    \end{itemize}
    \item \textbf{For $\lambda_2$ and $\lambda_3$:}
    \begin{itemize}
        \item If $q^* > q_{min}$, then $\lambda_2^* = 0$. If $q^* = q_{min}$, $\lambda_2^*$ absorbs the residual slack from equation \eqref{eq-dual-q}.
        \item If $q^* < q_{max}$, then $\lambda_3^* = 0$. If $q^* = q_{max}$, $\lambda_3^*$ absorbs the residual slack.
    \end{itemize}
\end{enumerate}

Once the optimal dual variables $\lambda^*$ are obtained, we generate the  optimality cut (\ref{eq-bd-PSNP-cut}) and add it to the Master Problem:
\begin{equation}\label{eq-bd-PSNP-cut}
    e \ge -\sum_{i=1}^N \lambda_{1i}^* d_i + \lambda_2^* \left(\sum_{t=1}^{N_P} q_{min}^t g_t\right) - \lambda_3^* \left(\sum_{t=1}^{N_P} q_{max}^t g_t\right).
\end{equation}

The decomposition approach solves the MP and SP iteratively until the gap between them is sufficiently narrowed.
%This decomposition approach iterates between the MP and the analytical solution of the SP until the gap between the upper bound (primal objective) and lower bound (MP objective $e$) closes.

\section{Performance of all candidate parametric models in the numerical experiment}\label{app::parametric}

%The performance of all candidate parametric prediction models are listed in Table \ref{tb::performance comparison}.
\begin{table}[htpb]
\TABLE
{Performance of all candidate parametric prediction models\label{tb::performance comparison}}
{%
\begin{tabular}{ll ll}
\toprule\up
Model & MSE & Model & MSE \\
\midrule
Lasso Regression & 4708.02 
& Deep Neural Network & 4760.81 \\

Lasso Least Angle Regression & 4708.02 
& Gradient Boosting Regressor & 4880.89 \\

Orthogonal Matching Pursuit & 4709.88 
& Random Forest Regressor & 4965.66 \\

Elastic Net & 4710.51 
& Dummy Regressor & 4974.70 \\

Bayesian Ridge & 4712.02 
& AdaBoost Regressor & 4999.98 \\

Ridge Regression & 4721.13 
& Extra Trees Regressor & 5002.58 \\

Least Angle Regression & 4721.15 
& Light Gradient Boosting Machine & 5090.30 \\

Linear Regression & 4721.15 
& K Neighbors Regressor & 5754.92 \\

Huber Regression & 4744.76 
& Decision Tree Regressor & 9492.33 \\

 &  
& Passive Aggressive Regressor & 14038.61 \\
\bottomrule
\end{tabular}%
}
{}
\end{table}

\section{Supplementary Materials of the Case Study}

\subsection{Data process and the establishment of true relationship}\label{app::caseData}
In case study, we employ the transaction-level data derived from JD.com for the month of March 2018, during which there were no major holidays or promotions \citepapp{2024JD}. Specifically, this study focuses on the orders table and the SKUs table. The orders table contains 486,928 unique customer orders that were placed during March 2018. For each specific order, its order date and day of week, original and final unit price, and individual order quantity are recorded. It should be noted that the original unit price is the price same for all customers, while the final unit price may vary among customers or orders owing to promotions or discounts. The SKUs table describes the characteristics of all 31,868 SKUs that belong to a single product category receiving at least one click during March 2018. Each SKU has two key attributes, whose higher values represent better performance of certain functionality. The first attribute takes integer value between 1 and 4, and the second one takes integer value between 30 and 100.

Regarding the processing and preparation of the dataset, the orders table serves as the primary sheet and the following data processing steps are conducted for each SKU. Firstly, calculate the total quantity of each SKU sold each day from the first to the last day of sale, and denote this as the daily demand for each SKU. Given substantial scale of JD.com as a major retailer, stock-outs are relatively uncommon. Secondly, compute the average original unit price and final unit price per day respectively and refer to them as the daily ones. Thirdly, recode weekdays and weekends as 0 and 1, respectively, and incorporate this variable into daily sales records. Additionally, the two key attributes from the SKUs table are merged into the daily sales records for each SKU. Finally, the dataset is filtered to include only SKUs with total sales exceeding 1,000 units, a threshold indicative of substantial sales volume, along with their relevant information, thereby forming the foundational dataset for analysis.

And then, we employ the processed dataset to conduct customized regressions for different SKUs, treating them as categorical features. The analysis examines the relationship between the dependent variable demand, and the independent variables prices and covariates including attribute 1, attribute 2, and weekday or not. The resulting random forest model achieved the best overall R² of 86\%. As an illustration, the true and predicted values of demand for the 6 SKUs with the highest R² are plotted as shown in Figure \ref{fig:Regression}.
\begin{figure}[htpb]
    \FIGURE
    {%
        \begin{minipage}{\textwidth}
        \centering
            \includegraphics[trim=0 1.5cm 0 0.2cm,clip,width=0.75\textwidth]{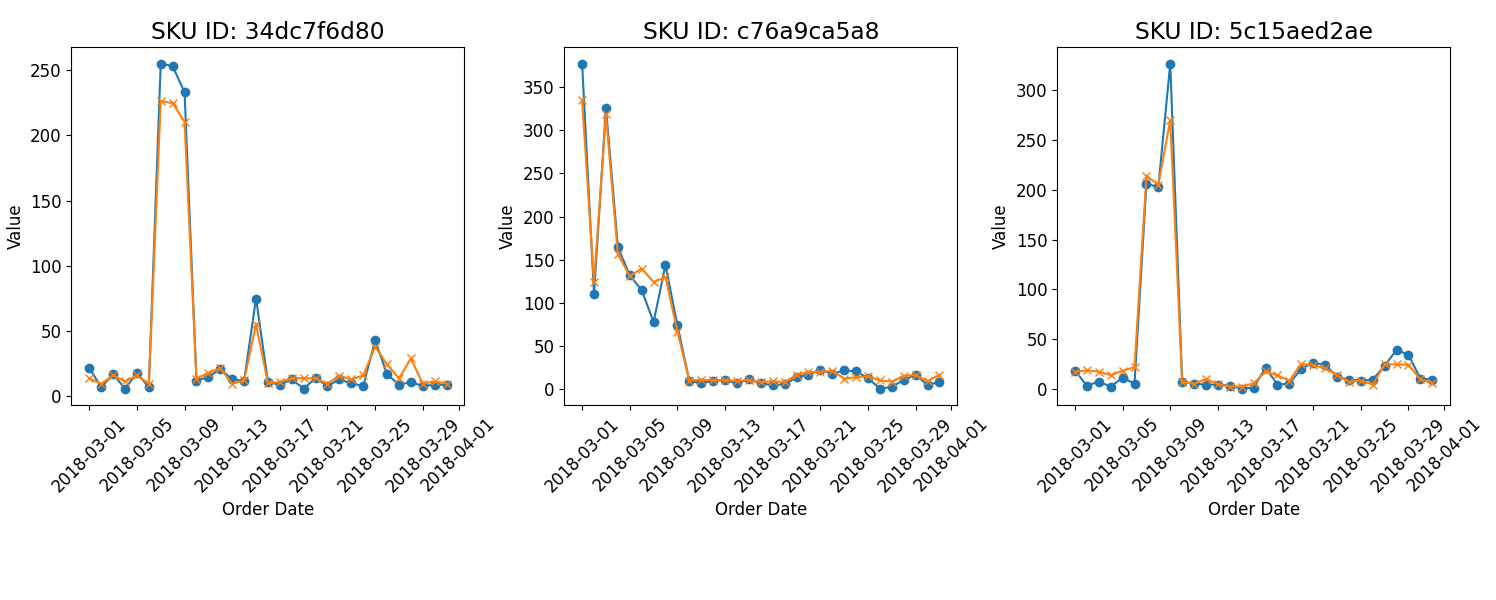} \\
            % \vspace{0.5em} % 两张图之间的间距
            \includegraphics[trim=0 0.2cm 0 0.5cm, clip, width=0.75\textwidth]{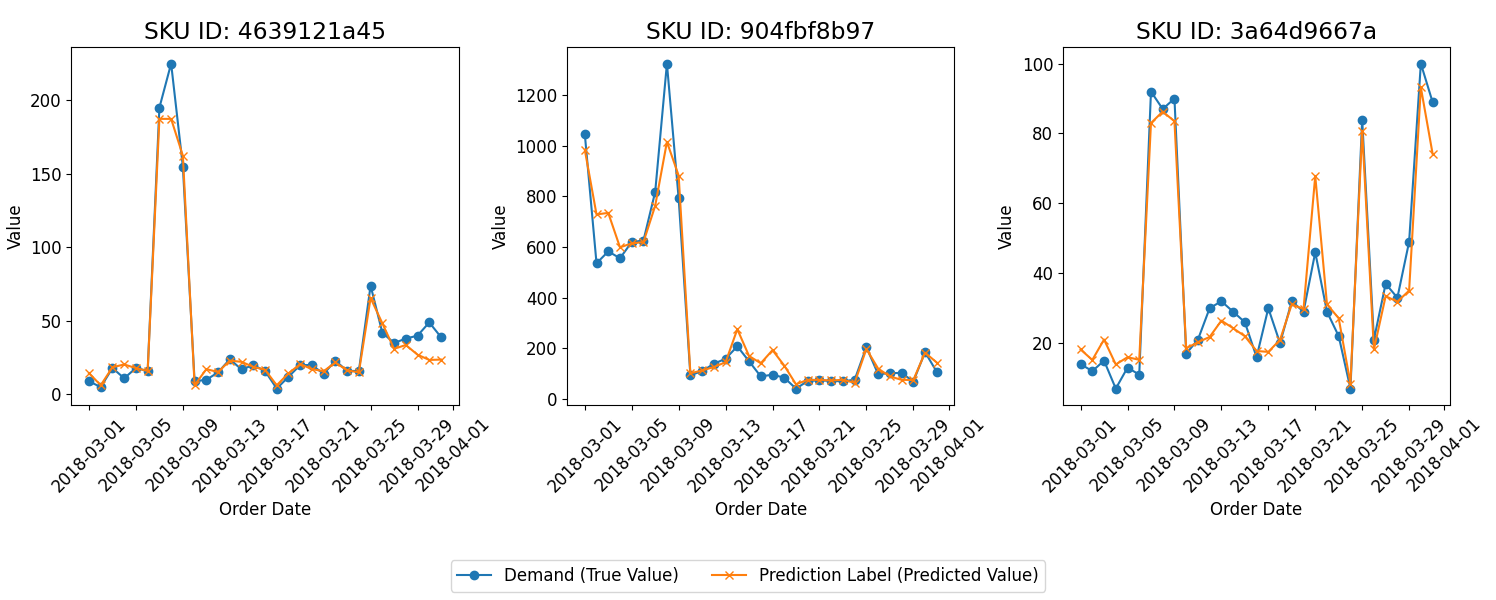}
        \end{minipage}%
    } % 图片内容
    {\centering Random Forest Model Performance. \label{fig:Regression}} % 图题
    {} % 空的备注
\end{figure}

The analysis of the prediction residuals reveals a waveform distribution centered around zero, which is a reasonable form for residuals (Figure \ref{fig:Residual}). We adopted the empirical residual distribution to characterize the error term, a methodology consistent with prior works in \citeapp{deng2022predictive, kannan2024residuals}.% and \citeapp{sen2018learning}.
\begin{figure}[htpb]
    \FIGURE
    {\includegraphics[trim=0 0 0 1.2cm, clip, width=0.6\textwidth]{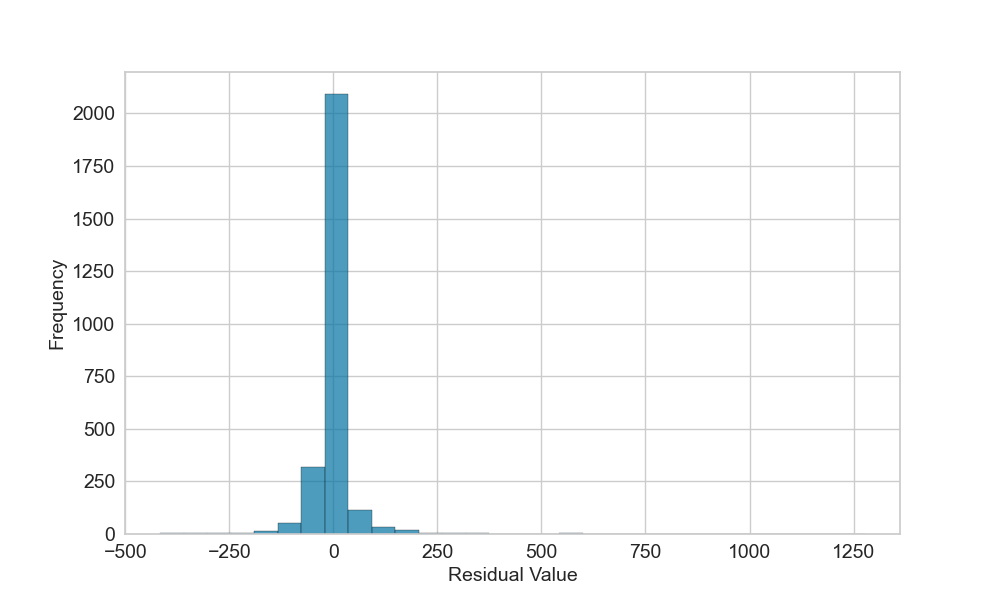}} % 图片内容
    {\centering Histogram of Residuals. \label{fig:Residual}} % 图题
    {} % 空的备注
\end{figure}

\subsection{Determination of cost, salvage value and other parameters}\label{app::caseParam}
To determine the unit cost and unit salvage value for each SKU, we referred to the financial report of JD for the first quarter of 2018 (https://ir.jd.com/news-releases/news-release-details/jdcom-announces-first-quarter-2018-results). It is stated that the average net return on goods was 2.1\% in this quarter. Therefore, we set the unit cost of the SKU to be 1/1.021 of its final average selling price.

Regarding the determination of salvage value, through our visits and investigations of merchants, we have learned that JD.com has a relatively good acquisition mechanism for its own merchants. Products that have not been fully sold within a certain period of time can be recovered at prices close to the original price. Therefore, the residual value in its operation mainly manifests in the inventory cost. For the SKUs products with the price around 100 yuan considered in this dataset, the monthly inventory cost per individual product is approximately 6 to 9 yuan, which accounts for about 9\% of their cost price. When averaged out over each day, it amounts to approximately 0.3\%. Therefore, we set its unit salvage value at 0.997 of its cost price.

In the selection of the pricing range $P$, considering the stability of the pricing, we set $P$ to be within three RMB above or below the original price, and divide it into units of 0.1 RMB.

\section{Proofs}\label{app::Proof}

We first introduce the definitions of Vapnik-Chervonenkis (VC) Dimension.

\begin{definition}[VC dimension]
    A family $\mathcal F$ of indicator functions $f:\mathcal Y\to\{0,1\}$ shatters a set $\{y_1,\cdots,y_m\}$ if every one of the $2^m$ labelings of those points can be realized by some $f\in\mathcal F$. The \textbf{VC‐dimension} of $\mathcal F$, denoted $\mathrm{VC}(\mathcal F)$, is the largest $m$ that it shatters (or $+\infty$ if arbitrarily large sets can be shattered).
\end{definition}

And we recall the Vapnik-Chervonenkis Theorem.

\begin{lemma}[VC Inequality \citepapp{vapnik1968uniform}]
\label{lem::VC Inequality}
    If an indicator class $\mathcal F$ has finite VC‐dimension of $v$, we have,
    \begin{equation}\label{eq-VC thm}
    \begin{aligned}
        &\mathbb{P}\left(\sup_{f\in F}\left|\frac 1 N \sum_{i=1}^N f(X_i) - E[f(X)]\right|>\epsilon\right)\le c_1v(2N)^v\exp(-N\epsilon^2/c_2),
    \end{aligned}
    \end{equation}
    in which $X_i$ is i.i.d sampled from the distribution of $X$, $c_1$ and $c_2$ are constants that are independent of $\mathcal{F}$, the distribution of $X$ and $N$.
\end{lemma}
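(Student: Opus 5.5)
This is the classical Vapnik--Chervonenkis inequality, and I would follow the standard route: symmetrization, reduction to a finite class via Sauer's lemma, and Hoeffding's inequality with a union bound. Write $P_N f=\frac1N\sum_{i=1}^N f(X_i)$ and $Pf=E[f(X)]$. Without loss of generality assume $N\epsilon^2\ge 2$; otherwise the right-hand side of \eqref{eq-VC thm} exceeds $1$ once $c_1$ is chosen large enough, and the bound is trivial.

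\textbf{Step 1 (ghost sample).} Introduce an independent copy $X_1',\dots,X_N'$ with empirical average $P_N'f$. The goal of this step is
\[
\mathbb{P}\Bigl(\sup_{f\in\mathcal F}|P_Nf-Pf|>\epsilon\Bigr)\le 2\,\mathbb{P}\Bigl(\sup_{f\in\mathcal F}|P_Nf-P_N'f|>\epsilon/2\Bigr).
\]
On the event that the left supremum exceeds $\epsilon$, pick (measurably) some $f^*$ depending only on the first sample with $|P_Nf^*-Pf^*|>\epsilon$. Conditionally on the first sample, Chebyshev gives $\mathbb{P}(|P_N'f^*-Pf^*|\le\epsilon/2)\ge 1-\frac{1}{N\epsilon^2}\ge\frac12$, since $f^*$ is $\{0,1\}$-valued and so has variance at most $1/4$. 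On the intersection of these events the triangle inequality gives $|P_Nf^*-P_N'f^*|>\epsilon/2$, which yields the display.

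\textbf{Step 2 (random swaps).} Let $\sigma_1,\dots,\sigma_N$ be i.i.d.\ Rademacher signs. The pair $(X_i,X_i')$ is exchangeable, so $P_Nf-P_N'f$ has the same law, jointly in $f$, as $\frac1N\sum_i\sigma_i\bigl(f(X_i)-f(X_i')\bigr)$. Now condition on the $2N$ points $X_1,\dots,X_N,X_1',\dots,X_N'$. The class $\mathcal F$ restricted to these points realizes at most $S_{\mathcal F}(2N)$ distinct label vectors. By Sauer's lemma, $S_{\mathcal F}(2N)\le\sum_{j\le v}\binom{2N}{j}\le (v+1)(2N)^v$, which is the source of the polynomial factor $c_1 v(2N)^v$. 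The supremum therefore becomes a maximum over finitely many vectors.

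\textbf{Step 3 (Hoeffding and union bound).} For each fixed label vector, $\frac1N\sum_i\sigma_i a_i$ with $a_i\in\{-1,0,1\}$ is a sum of independent bounded variables. Hoeffding gives a tail bound of $2\exp(-N\epsilon^2/8)$ at level $\epsilon/2$. A union bound over the at most $(v+1)(2N)^v$ vectors, followed by integrating out the conditioning, gives
\[
\mathbb{P}\Bigl(\sup_{f\in\mathcal F}|P_Nf-Pf|>\epsilon\Bigr)\le 4(v+1)(2N)^v e^{-N\epsilon^2/8}.
\]
This has the claimed form with universal constants, e.g.\ $c_1=8$ (using $v+1\le 2v$ for $v\ge1$) and $c_2=8$.

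\textbf{Expected obstacle.} The main obstacle is Step 1, namely justifying the choice of $f^*$ and the conditional Chebyshev bound rigorously. This requires measurability of the supremum, for instance by assuming $\mathcal F$ is permissible or countably determined. For the classes used in the paper this holds because $\mathcal C(z,x)$ varies over parametrized balls and neighbourhoods, so the supremum can be taken over a countable dense set of parameters. A secondary point is the proof of Sauer's lemma; I would cite it as standard rather than reprove it.
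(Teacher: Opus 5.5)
Your argument is correct and is the standard Vapnik--Chervonenkis proof (ghost sample, swap symmetrization, Sauer's lemma, then Hoeffding with a union bound); the paper does not prove this lemma but simply cites it, and your route even recovers the classical form $4S_{\mathcal F}(2N)e^{-N\epsilon^2/8}\le 4(v+1)(2N)^v e^{-N\epsilon^2/8}$. The only caveat is that both the step $v+1\le 2v$ and your reduction to $N\epsilon^2\ge 2$ require $v\ge 1$: when $v=0$ the class is a single function, and the bound as stated with the factor $v$ is false in general, so the $(v+1)$ form you actually derive is the correct statement (and it is the form the paper itself uses in the proof of Proposition~\ref{prop::DNConsistency}).
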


Intuitively, Lemma \ref{lem::VC Inequality} states that if an indicator class $F$ has finite VC-dimension, then the sample average approximation built with this indicator has strongly uniform consistency since the right-hand-side of \eqref{eq-VC thm} converges to 0 when $N\to\infty$.

\subsection{Proof of Lemma~\ref{lem::kNN} and Lemma~\ref{lem::CART}}

In the case of kNN, for any fixed $z$:
\begin{equation}\label{eq-pointwise consistency}
    \sup_{x\in \mathcal{X}, z_1'\in\mathcal{Z}_1} \left|\sum_{i=1}^Nw_i(z_1', x) l(z; y_i) - E_{Y\sim f(\cdot \mid z_1', x)}[l(z, Y)]\right| \to 0,
\end{equation}
almost surely (Lemma 4 in \citeapp{bertsimas2019predictions}).

In the case of CART, for any fixed $z$, (\ref{eq-pointwise consistency}) holds with probability (Lemma 7 in \citeapp{bertsimas2019predictions}). (\ref{eq-pointwise consistency}) has exactly the same form as (EC.7) in \citeapp{bertsimas2020predictive}. Then, as Assumptions \ref{ass::decomp&Ign} and \ref{ass::continuous} hold, following exactly the same proving routine of Theorem EC.9 in \citeapp{bertsimas2020predictive}, we eventually get for any fixed $x$:
\begin{equation}\nonumber
    \sup_{z\in\mathcal{Z}}\left|\sum_{i=1}^Nw_i(z, x) l(z; y_i) - E_{Y\sim f(\cdot \mid z, x)}[l(z, Y)]\right|\rightarrow 0
\end{equation}
almost surely in the case of kNN or with probability in the case of CART, which is the third equation from the bottom of the proof of Theorem EC.9 in \citeapp{bertsimas2020predictive}, and also the definition of uniform consistency in Definition \ref{def::uniform consistency}. Q.E.D.

\subsection{Proof of Proposition~\ref{prop::DNConsistency}}
Since all chance constraints share similar assumptions, the proving processes are the same throughout all chance constraints. For simplicity, we drop the subscript of the chance constraints. We start with the proof of LSA approximation. As a spherical classifier, the VC dimension of $K_{z,x,h}(Z, X) = \mathbb{I}\left\{\Vert (Z, X) - (z, x)\Vert \le h\right\}$ is $v = d^{z_1} + d^x + 1$. 

Denote $\Delta_N = \sup\limits_{z\in\mathcal{Z}, x\in\mathcal{X}}\left|\hat{U}^{LSA}(z,x) -U^{LSA}(z,x)\right|$, set $\epsilon_N = M\sqrt{\frac{\log N}{Nh_N}}$, where $M$ satisfies that $M^2/c_2 > 1$. By Lemma~\ref{lem::VC Inequality},
\begin{equation}\nonumber
    \begin{aligned}
        \mathbb{P}(\Delta_N>\epsilon_N) &\le c_1(v+1)(2N)^v\exp\left(-\frac{N}{c_2} M^2 \frac{\log N}{Nh_N}\right)=c_1(v+1)(2N)^vN^{-(M^2/c_2)(1/h_N)}.
    \end{aligned}
\end{equation}

And since $h_N=CN^{-\delta_{LSA}}$, 
\begin{equation}\nonumber
    \begin{aligned}
        \mathbb{P}(\Delta_N>\epsilon_N) &\le c_1(v+1)(2N)^vN^{-(M^2/c_2)(1/CN^{-\delta_{LSA}})}\\
        &=c_1(v+1)2^v\cdot N^{v-(M^2/c_2C)N^{\delta_{LSA}}}\le c_1(v+1)2^v\cdot N^{v-(1/C)N^{\delta_{LSA}}}.
    \end{aligned}
\end{equation}

For any $\epsilon > 0$, we have,
\begin{equation}\nonumber
    \begin{aligned}
        \log(\mathbb{P}(\Delta_N>\epsilon_N))
        \le \log(c_1) + \log(v+1)+v\log2 + (v-(1/C)N^{\delta_{LSA}})\log N.
    \end{aligned}
\end{equation}

For the RHS of the constraint, $(v-(1/C)N^{\delta_{LSA}})\log N$ is decreasing when $N$ is big enough that $(v-(1/C)N^{\delta_{LSA}}) < 0$.

If $c_3=\log\epsilon - \log(c_1) - \log(v+1)-v\log2 > 0$, then when $N > \lceil \exp({\frac{\log Cv}{\delta_{LSA}}}) \rceil$, $(v-(1/C)N^{\delta_{LSA}})\log N < 0 < c_3$. And if $c_3 < 0$, set $N_0 = \lceil(\max(-2\delta_{LSA} c_3C, 1, 2vC, e))^{1/\delta_{LSA}} \rceil$. Let $t=N^{\delta_{LSA}}$, then $N = t^{1/\delta_{LSA}}$, $\log N=\log t/\delta_{LSA}$, then
\begin{equation}\nonumber
    \begin{aligned}
        (v-(1/C)N^{\delta_{LSA}})\log N &= \left(v-\frac{t}{C}\right)\frac{\log t}{\delta_{LSA}} = \frac 1 {\delta_{LSA}} \left(v\log t - \frac{t\log t}{C}\right).
    \end{aligned}
\end{equation}

When $t > t_0= N_0^{\delta_{LSA}} = \lceil(\max(-2\delta_{LSA} c_3C, 1, 2vC, e))\rceil$, since $t > e$, we have $\log t > 1.$ And since $t > 2vC$, $t/C-v > t/2C, \log t>0$,
\begin{equation}\nonumber
    \log t(t/C-v)>\frac{t}{2C}\log t.
\end{equation}

Since $t>-2\delta_{LSA} c_3C$, we have $t\log t > t > -2\delta_{LSA} c_3C$, then $\frac{t}{2C}\log t > \frac{-2\delta_{LSA} c_3C}{2C}=-\delta_{LSA} c_3$. So,
\begin{equation}\nonumber
    \begin{aligned}
        &\frac 1 {\delta_{LSA}} \left(v\log t - \frac{t\log t}{C}\right)
        <-\frac{1}{\delta_{LSA}}\log t\frac{t}{2C}
        <\frac{\delta_{LSA} c_3}{\delta_{LSA}}
        =c_3.
    \end{aligned}
\end{equation}

Namely, $\forall \epsilon > 0$, when $c_3=\log\epsilon - \log(c_1) - \log(v+1)-v\log2 < 0$, $\exists N_0 = \lceil(\max(-2\delta_{LSA} c_3C, 1, 2vC, e))^{1/\delta_{LSA}} \rceil$, when $N>N_0$, $(v-(1/C)N^{\delta_{LSA}})\log N < c_3$.

In conclusion, $\forall \epsilon > 0$, $\exists N_0$, when $N>N_0$, we have that $\mathbb{P}\left(\Delta_N>M\sqrt{\frac{\log N}{Nh_N}}\right) < \epsilon$. So, 
$$\sup\limits_{z\in\mathcal{Z}, x\in\mathcal{X}}\left|\hat{U}^{LSA}(z,x) -U^{LSA}(z,x)\right| = O_p\left(\sqrt{\frac{\log N}{Nh_N}}\right).$$

And for kNN approximation, it is evident that that its classifier is essentially a sphere centered at $(z, x)$, with the distance from $(z, x)$ to the k-th farthest historical data point as its radius. So the VC-dimension of kNN classifier class $K^{kNN}_{z,x,k,N}(Z,X) = \mathbb{I}\{(Z,X)\text{ is a kNN of }(z,x)\}$ is also $(d^{z_1} + d^x+1)$.

Take the following case (see Figure \ref{fig:kNNexample}) as an example. No matter where $(z,x)$ is placed and how $k$ is chosen, we will never classify point A, B, C as 1 while classify point O as 0. So for 2-dimension kNN classifier, the VC dimension is 3.
\begin{figure}[htpb]
    \FIGURE
    {\fbox{\includegraphics[trim=0.5cm 2.2cm 0.2cm 1cm, clip, scale=0.35]{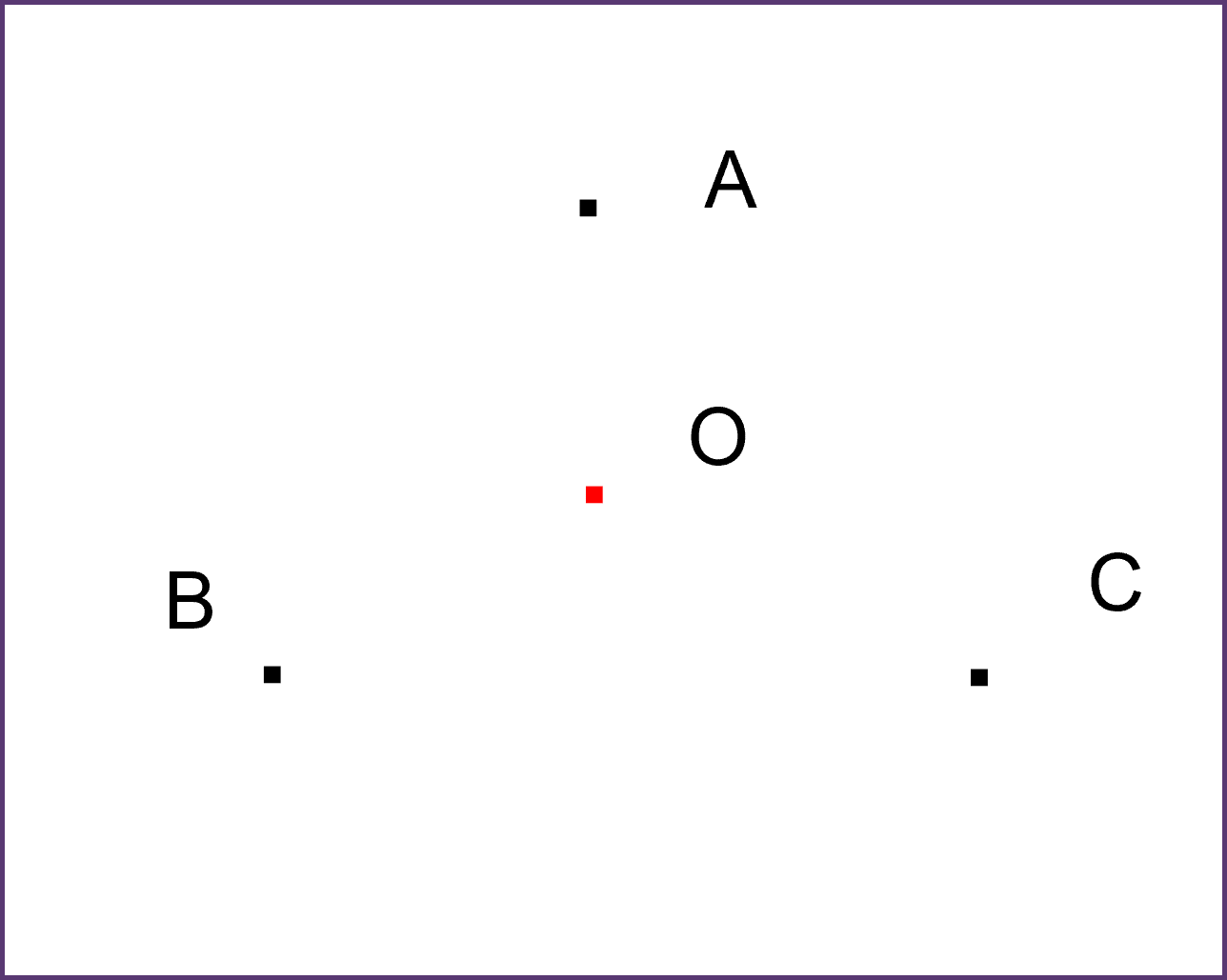}}} % 图片内容
    {\centering An example of 4 points that any kNN classifier cannot shatter. \label{fig:kNNexample}} % 图题
    {} % 空的备注
\end{figure}

Denote $\Delta_N = \sup\limits_{z\in\mathcal{Z}, x\in\mathcal{X}}\left|\hat{U}^{kNN}(z,x) -U^{kNN}(z,x)\right|$, set $\epsilon_N = M\sqrt{\frac{\log N}{N}}$, $M$ is big enough that $M^2/c_2 > v$. By Lemma~\ref{lem::VC Inequality},
\begin{equation}\nonumber
    \begin{aligned}
        \mathbb{P}(\Delta_N>\epsilon_N) &\le c_1(v+1)(2N)^v\exp\left(-\frac{N}{c_2} M^2 \frac{\log N}{N}\right)=c_1(v+1)(2N)^vN^{-M^2/c_2}=c_1(v+1)2^vN^{v-M^2/c_2}.
    \end{aligned}
\end{equation}

Since $M^2/c_2>v$, $c_1(v+1)2^vN^{v-M^2/c_2}$ is decreasing with respect to $N$. Therefore, $\forall \epsilon>0$, $\exists N_0=\lceil \exp(\frac{\log \epsilon - \log c_1 - \log (v+1) - v\log2}{v-M^2/c_2}) \rceil$, when $N>N_0$, we have, $\log c_1 + \log (v+1) + v\log2 + (v-M^2/c_2)\log N < \log \epsilon$, namely, $\mathbb{P}(\Delta_N>\epsilon_N)<c_1(v+1)2^vN^{v-M^2/c_2} < \epsilon$. And thus, 
$$\sup\limits_{z\in\mathcal{Z}, x\in\mathcal{X}}\left|\hat{U}^{kNN}(z,x) -U^{kNN}(z,x)\right| = O_p\left(\sqrt{\frac{\log N}{N}}\right).$$

Likewise, the VC dimension of $F^{LSA}_{z,x,h}(Z, X) = \mathbb{I}\left\{\Vert (Z, X) - (z, x)\Vert \le h\right\}\cdot\mathbb{I}\{\psi(z, Y)\le 0\}$ and $F^{kNN}_{z,x,k,N}(Z,X) = \mathbb{I}\{(Z,X)\text{ is a kNN of }(z,x)\}\cdot\mathbb{I}\{\psi(z, Y)\le0\}$ are less than $d^{z_1} + d^x + 1$. Former proof process can also be applied to $\left|\hat{V}(z,x) -V(z,x)\right|$. So, 
\begin{equation}\nonumber
    \begin{aligned}
    \sup_{z\in\mathcal{Z}, x\in\mathcal{X}}\left|\hat{V}^{LSA}(z,x) -V^{LSA}(z,x)\right|&= O_p\left(\sqrt{\frac{\log N}{Nh_N}}\right),\\
    \sup_{z\in\mathcal{Z}, x\in\mathcal{X}}\left|\hat{V}^{kNN}(z,x) -V^{kNN}(z,x)\right|&= O_p\left(\sqrt{\frac{\log N}{N}}\right).\qquad \text{Q.E.D.}
    \end{aligned}
\end{equation}

\subsection{Proof of Proposition~\ref{prop::Frac Consistency}}

The consistency to be proved can be decomposed into,
\begin{equation}
    \begin{aligned}
        &\left|\frac{\hat{V}(z,x)}{\hat{U}(z,x)} - \frac{V(z,x)}{U(z,x)}\right| 
        = \left| V(z, x)\left(\frac{1}{\hat{U}(z,x)} - \frac{1}{U(z,x)}\right) + \frac{\hat{V}(z, x)-V(z,x)}{\hat{U}(z, x)}\right|\\
        \le& \frac{V(z,x)}{U(z,x)\hat{U}(z,x)}\left|\hat{U}(z,x) - U(z,x)\right| + \left|\frac{\hat{V}(z, x)-V(z,x)}{\hat{U}(z, x)}\right|\\
        \le&\left|\frac{\hat{U}(z,x) - U(z,x)}{\hat{U}(z, x)}\right| + \left|\frac{\hat{V}(z, x)-V(z,x)}{\hat{U}(z, x)}\right|
        = \Delta_1 + \Delta_2,
    \end{aligned}
\end{equation}
where the first inequality comes from the triangle inequality, and the second inequality holds from the fact that $0\le V(z, x) \le  U(z,x) \le 1$.

We prove the proposition by separately stating the consistency of $\Delta_1$ and $\Delta_2$. Start with LSA approximation. As a sphere classifier, $U^{LSA}(z,x)$ is the probability that $(Z,X)$ falls into the sphere with $(z, x)$ as the center and $h_N$ as the radius. From Assumption \ref{ass::regularity}, we know that $ U^{LSA}(z,x) \ge c_4 h_N^d$, where $c_4$ is a constant related to the infimum of the distribution and the volume of the sphere, $d = d^{z_1} + d^x$.

And from Proposition~\ref{prop::DNConsistency} we know that, $\sup\limits_{z\in\mathcal{Z}, x\in\mathcal{X}}\left|\hat{U}^{LSA}(z,x) -U^{LSA}(z,x)\right| = O_p\left(\sqrt{\frac{\log N}{Nh_N}}\right)$. Let $\epsilon_N = \sqrt{\frac{\log N}{N h_N}}$. Under the condition $0 < \delta_{LSA}(2d+1) < 1$, the ratio satisfies:
$$\frac{\epsilon_N}{h_N^d} = \frac{\sqrt{\log N}}{\sqrt{N h_N^{2d+1}}} = \frac{\sqrt{\log N}}{\sqrt{C^{2d+1} N^{1-\delta_{LSA}(2d+1)}}} \rightarrow 0 \text{ as } N \rightarrow \infty \text{.}$$
This establishes that $\epsilon_N = o(h_N^d)$ uniformly over $Z \times X$. By Proposition~\ref{prop::DNConsistency}, we have $\sup_{z,x} |\hat{U}^{LSA}(z,x) - U^{LSA}(z,x)| \le \epsilon_N$ almost surely. Therefore, for sufficiently large $N$, the triangle inequality yields $\hat{U}^{LSA}(z,x) \ge U^{LSA}(z,x) - \epsilon_N \ge c_4h_N^d - o(h_N^d) \ge \frac{1}{2}c_4h_N^d$ uniformly over $Z \times X$ almost surely.

% Namely, 

% $$\mathbb{P}\left(\sup_{z\in\mathcal{Z}, x\in\mathcal{X}}\left|\hat{U}^{LSA}(z,x) -U^{LSA}(z,x)\right| > M\sqrt{\frac{\log N}{Nh_N}}\right)\xrightarrow[N\to\infty]{} 0.$$

% And since,
% \begin{equation}\nonumber
%     \begin{aligned}
%         &\inf_{z\in\mathcal{Z}, x\in\mathcal{X}}\hat{U}^{LSA}(z,x) -U^{LSA}(z,x)\le\hat{U}^{LSA}(z,x) -U^{LSA}(z,x) \le \left|\hat{U}^{LSA}(z,x) -U^{LSA}(z,x)\right|\\
%         \le &\sup_{z\in\mathcal{Z}, x\in\mathcal{X}}\left|\hat{U}^{LSA}(z,x) -U^{LSA}(z,x)\right| =O_p\left(\sqrt{\frac{\log N}{Nh_N}}\right),
%     \end{aligned}
% \end{equation}

% we have,
% \begin{equation}\nonumber
%     \mathbb{P}\left(\inf_{z\in\mathcal{Z}, x\in\mathcal{X}}\hat{U}^{LSA}(z,x) >c_4h_N^d + M\sqrt{\frac{\log N}{Nh_N}}\right)\xrightarrow[N\to\infty]{}0.
% \end{equation}

% And since in Lemma \ref{lem::LSA}, we assume that $ 0<\delta_{LSA}(2d+1)<1$ and $h_N=CN^{-\delta_{LSA}}$, we have $\sqrt{\frac{\log N}{Nh_N}}\frac{1}{h_N^d}=\sqrt{\frac{\log N}{CN^{1-\delta_{LSA}(1+2d)}}}.$ Based on the known asymptotic relationship between power functions and logarithmic functions, we know that $O\left(\sqrt{\frac{\log N}{Nh_N}}\right) = O\left(h_N^d\right)$. Hence, $\inf_{z\in\mathcal{Z}, x\in\mathcal{X}}\hat{U}^{LSA}(z,x) = O_p(h_N^d)$. 

Therefore, we have,
\begin{equation}\nonumber
    \begin{aligned}
        \sup\limits_{z\in\mathcal{Z}, x\in\mathcal{X}}\Delta_1&\le\frac{\sup\limits_{z\in\mathcal{Z}, x\in\mathcal{X}}\left|\hat{V}^{LSA}(z, x)-V^{LSA}(z,x)\right|}{\hat{U}^{LSA}(z, x)}\le\frac{O_p\left(\sqrt{\frac{\log N}{Nh_N}}\right)}{\frac{1}{2}c_4h_N^d}=O_p\left(\sqrt{\frac{\log N}{N^{1-\delta_{LSA}(2d+1)}}}\right)\xrightarrow[N\to\infty]{a.s.}0.
    \end{aligned}
\end{equation}

Similarly, since $\sup\limits_{z\in\mathcal{Z}, x\in\mathcal{X}}\left|\hat{V}^{LSA}(z,x) -V^{LSA}(z,x)\right|= O_p\left(\sqrt{\frac{\log N}{Nh_N}}\right)$, the former proof can be applied to $\Delta_2$. Namely,
\begin{equation}
    \sup_{z\in\mathcal{Z}, x\in\mathcal{X}}\Delta_2 = O_p\left(\sqrt{\frac{\log N}{N^{1-\delta_{LSA}(2d+1)}}}\right)\xrightarrow[N\to\infty]{a.s.}0
\end{equation}

Therefore, we have for LSA,
\begin{equation}\label{eq-sample efficiency-LSA}
    \sup_{z\in\mathcal{Z},x\in\mathcal{X}} |\Delta_1| + |\Delta_2| = O_p\left(\sqrt{\frac{\log N}{N^{1-\delta_{LSA}(2d+1)}}}\right)\xrightarrow[N\to\infty]{a.s.} 0.
\end{equation}

For kNN approximation, the denominator $\hat{U}^{kNN}(z,x)$ exactly equals $k/N$. And since in Lemma \ref{lem::kNN}, we assume that $0<C<1, 0.5<\delta_{kNN}<1$, and $k=\lceil CN^{\delta_{kNN}}\rceil$. We have that $\hat{U}^{kNN}(z,x)=k/N = O(N^{\delta_{kNN}-1})$. Then,
\begin{equation}
        \sup_{z\in\mathcal{Z},x\in\mathcal{X}}\Delta_1 = \frac{\sup_{z\in\mathcal{Z}, x\in\mathcal{X}}|\hat{V}^{kNN}(z, x)-V^{kNN}(z,x)|}{k/N}=\frac{O_p\left(\sqrt{\frac{\log N}{N}}\right)}{k/N}=O_p\left(\sqrt{\frac{\log N}{N^{2\delta_{kNN}-1}}}\right).
\end{equation}

Similarly,
\begin{equation}
    \sup_{z\in\mathcal{Z},x\in\mathcal{X}}\Delta_2 = O_p\left(\sqrt{\frac{\log N}{N^{2\delta_{kNN}-1}}}\right).
\end{equation}

And since $2\delta_{kNN}-1 > 0$, $\sqrt{\frac{\log N}{N^{2\delta_{kNN}-1}}} \xrightarrow[]{N\to\infty}0$. Therefore, we have for kNN,
\begin{equation}\label{eq-sample efficiency-kNN}
    \sup_{z\in\mathcal{Z},x\in\mathcal{X}} |\Delta_1| + |\Delta_2| = O_p\left(\sqrt{\frac{\log N}{N^{2\delta_{kNN}-1}}}\right)\xrightarrow[N\to\infty]{a.s.} 0.
\end{equation}
That concludes the proof for kNN approximation. In conclusion,
\begin{equation}
\sup_{z\in\mathcal{Z},x\in\mathcal{X}}\left|\frac{\hat{V}(z,x)}{\hat{U}(z,x)} - \frac{V(z,x)}{U(z,x)}\right| \xrightarrow[N\to\infty]{a.s.} 0,
\end{equation}
for both LSA and kNN approximation, which concludes the proof. Q.E.D.

To further connect Proposition~\ref{prop::Frac Consistency} and Theorem~\ref{theo::chance-consistency}, we introduce two lemmas:
\begin{lemma}
\label{lem::Probability Continuity}
    $P(\psi(z, Y)\le 0\mid Z=z', X=x')$ is continuous on $z'$ and $x'$ jointly. Furthermore, if $I: \mathcal{Z}\times\mathcal{Y}\to \mathbb{R}$ is $L_I-$Lipschitz continuous, the mapping $(z,x) \to f(z,x)$ is $L_f-$Lipschitz continuous under the 1-Wasserstein distance, and for any $(z, x)$, the probability density function of $\psi(z, Y)$ where $Y\sim f(z,x)$ has an consistent upper bound, then $P(\psi(z, Y)\le 0\mid Z=z, X=x)$ is Lipschitz continuous on $z$ and $x$ jointly.
\end{lemma}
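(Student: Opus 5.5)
The plan is to handle the two claims separately. For each, I write $G(z;z',x') = P(\psi(z,Y)\le 0\mid Z=z',X=x')$. Here $\psi$ plays the role of the Lipschitz map $I$ in the statement, and $f(z',x')$ is the conditional law of $Y$.

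\emph{Joint continuity in $(z',x')$.} By Assumption~\ref{ass::regularity}, the joint density $f(z,x,y)$ is continuous and bounded between $c$ and $C$ on a compact set. Hence the conditional density $f(y\mid z',x') = f(z',x',y)/f(z',x')$ is continuous in $(z',x')$ for every $y$. It is also uniformly bounded by $C/(c\,|\mathcal{Y}|)$, where $|\mathcal{Y}|$ is the Lebesgue measure of $\mathcal{Y}$. Since $\mathcal{Y}$ is bounded, the dominated convergence theorem applied to $\int \mathbb{I}\{\psi(z,y)\le 0\} f(y\mid z',x')\,dy$ gives continuity along any sequence $(z'_n,x'_n)\to(z',x')$. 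This part is routine.

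\emph{Lipschitz continuity of $(z,x)\mapsto G(z;z,x)$.} I split the increment into a decision term and a distribution term:
\begin{equation*}
|G(z;z,x)-G(\tilde z;\tilde z,\tilde x)| \le |G(z;z,x)-G(\tilde z;z,x)| + |G(\tilde z;z,x)-G(\tilde z;\tilde z,\tilde x)|.
\end{equation*}

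For the decision term, fix the law $Y\sim f(z,x)$. Lipschitz continuity of $\psi$ gives $|\psi(z,Y)-\psi(\tilde z,Y)|\le L_I\|z-\tilde z\|$. Therefore the symmetric difference of the two events is contained in $\{|\psi(z,Y)|\le L_I\|z-\tilde z\|\}$. The uniform density bound $M$ on $\psi(z,Y)$ then bounds this term by $2ML_I\|z-\tilde z\|$, which is Lipschitz.

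For the distribution term, fix $\tilde z$ and set $h=\psi(\tilde z,\cdot)$. I take an optimal $W_1$-coupling $(Y,\tilde Y)$ of $f(z,x)$ and $f(\tilde z,\tilde x)$, so that $E\|Y-\tilde Y\|\le L_f\|(z,x)-(\tilde z,\tilde x)\|$. I then push it forward through $h$. The pair $(h(Y),h(\tilde Y))$ is a coupling of the two one-dimensional laws of $\psi(\tilde z,\cdot)$, with expected displacement at most $L_I L_f\|(z,x)-(\tilde z,\tilde x)\|$. Both laws have densities bounded by $M$. The remaining task is to bound $|F_1(0)-F_2(0)|$, the difference of their distribution functions at $0$, by a constant times this $W_1$ distance.

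\textbf{Main obstacle.} This last step is where I expect the difficulty. The generic argument bounds the gap by $W_1/\delta + 2M\delta$. It either smooths the indicator by a ramp of width $\delta$, or splits the coupling according to whether $|h(Y)|\le\delta$. Optimizing over $\delta$ gives only the order $\sqrt{M\,W_1}$, that is, Hölder-$1/2$ and not Lipschitz.

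To reach the Lipschitz rate as stated, I would first try to exploit one-dimensionality. On $\mathbb{R}$ one has $W_1(F_1,F_2)=\int|F_1(t)-F_2(t)|\,dt$. If $|F_1(0)-F_2(0)|=\eta$, then the bounded densities force $|F_1-F_2|\ge \eta/2$ on an interval of length $\eta/(4M)$ around $0$. This yields $\eta^2\lesssim M\,W_1$, which reproduces only the square-root rate.

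To upgrade this to a linear bound, I would need the family $(z,x)\mapsto$ law of $\psi(\tilde z,Y)$ to be regular at $0$ in a linear sense. One option is to read the ``consistent upper bound'' hypothesis as applying to a density of the displacement $h(\tilde Y)-h(Y)$ along a monotone (quantile) coupling, or as local uniformity of $F_1-F_2$ near $0$. Under such a reading, $|F_1(0)-F_2(0)|$ is controlled by the average of $|F_1-F_2|$ over a fixed neighbourhood of $0$, and hence by $W_1$ times a constant. I would try the quantile coupling first. In that coupling the displacement $F_2^{-1}(u)-F_1^{-1}(u)$ is monotone-coupled, and its value at $u=F_1(0)$ is what must be compared with its integral.

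Once this comparison is settled, adding the decision term and the distribution term gives a Lipschitz constant of the form $2ML_I + c\,M L_I L_f$.
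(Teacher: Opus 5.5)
Your continuity argument is fine. It uses dominated convergence with the bound $f(y\mid z',x')\le C/(c\,|\mathcal{Y}|)$ and is equivalent to the paper's uniform-continuity estimate. Your decision-term bound $2ML_I\|z-\tilde z\|$ is also correct. The gap is the distribution term. You never bound it linearly, and the readings you propose for the density hypothesis are extra assumptions, so as written the proposal does not prove the Lipschitz claim.

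Your suspicion is correct, though, and no argument can close this gap: the claim is false under the stated hypotheses. Take $\mathcal{Y}=[-1,1]$, $\psi(z,y)=y$ (so $L_I=1$), and a scalar parameter $\theta=z\in[0,1/16]$. Let $Y\sim\mu_\theta$ have distribution function $F_\theta(t)=\tfrac{t+1}{2}+\max\{0,\sqrt{\theta}-|t|/4\}$ on $[-1,1]$.
\begin{itemize}
\item The densities take values in $\{1/4,1/2,3/4\}$.
\item The tents increase pointwise in $\theta$, so $W_1(\mu_\theta,\mu_{\theta'})=\int|F_\theta-F_{\theta'}|\,dt=4|\theta-\theta'|$.
\item Yet $g(\theta)=F_\theta(0)=\tfrac12+\sqrt{\theta}$, which is not Lipschitz at $\theta=0$.
\end{itemize}
Continuity of densities does not rescue the claim. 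Replace the tent by $\theta^{3/4}\rho(t\,\theta^{-1/4})$ for a smooth bump $\rho$ with $\rho(0)=1$. For small $\theta$ this gives jointly continuous densities bounded away from zero and infinity, still $W_1$-Lipschitz in $\theta$, with $g(\theta)-g(0)=\theta^{3/4}$. So the square-root loss you found is sharp, not an artifact of technique.

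The paper reaches a linear rate by a single coupling for the joint change of $(z,x)$. This reduces the problem to $\mathbb{P}(|\psi(z_1,Y_1)|\le T)$ with $T=L_I(d+\|Y_1-Y_2\|)$. The paper then asserts $\mathbb{P}(|\psi(z_1,Y_1)|\le T)\le 2M\,\mathbb{E}[T]$ by conditioning on $T=t$. That step treats $T$ as independent of $Y_1$, which it is not, since $T$ is built from the coupled pair. In the example, take any coupling of $\mu_\theta$ and $\mu_0$ with $\mathbb{E}|Y_1-Y_2|\le 4\theta$. A sign change between $Y_1$ and $Y_2$ forces $|Y_1|\le|Y_1-Y_2|\le T$, so the left side is at least $|F_\theta(0)-F_0(0)|=\sqrt{\theta}$. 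Meanwhile $2M\,\mathbb{E}[T]=O(\theta)$: the transport happens exactly in the band where $|\psi|$ is small.

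What your decomposition does prove is a H\"older bound. Put the $\delta$-band around the matched variable $\psi(\tilde z,\tilde Y)$ with $\tilde Y\sim f(\tilde z,\tilde x)$. The stated density hypothesis does not cover the law of $\psi(\tilde z,Y)$ with $Y\sim f(z,x)$, so do not rely on both laws having density at most $M$. Optimizing $2M\delta+L_IL_f d/\delta$ over $\delta$ then gives
\begin{equation*}
|g(z\mid X=x)-g(\tilde z\mid X=\tilde x)|\le 2ML_I\,d+2\sqrt{2ML_IL_f\,d},\qquad d=\|(z,x)-(\tilde z,\tilde x)\|.
\end{equation*}
This is all the proof of Theorem~\ref{theo::asym} needs, since it invokes only continuity of $g$. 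If Lipschitz continuity is genuinely needed, replace the $W_1$ hypothesis by uniform Lipschitz continuity of $f(y\mid z,x)$ in $(z,x)$, as in the paper's first-part argument. Then the distribution term is at most $|\mathcal{Y}|\,L\,d$ for every fixed $\tilde z$, and your decision term completes the proof.
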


\begin{lemma}
    \label{lem::limitation-r}
    Denote $\mathcal{B}_r(z,x)$ as the minimal sphere with $(z,x)$ as the center and a radius of $r$, and $\mathcal{U}_r(z,x)$ is an arbitrary subset of $\mathcal{B}_r(z,x)$ with non-zero Lebesgue measure, we have,
    \begin{equation}
        \begin{aligned}
            &\lim_{r\to 0}\left\lvert P[\psi(z, Y)\le 0 \mid (Z, X) \in \mathcal{U}_r(z,x)]-P[\psi(z, Y)\le 0 \mid Z=z,X=x]\right\rvert = 0.
        \end{aligned}
    \end{equation}
\end{lemma}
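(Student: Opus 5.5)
The plan is to write the conditional probability on the left as a density-weighted average of the pointwise conditional probabilities over $\mathcal{U}_r(z,x)$. Once it is in that form, the limit follows from the continuity part of Lemma~\ref{lem::Probability Continuity}.

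Fix the query point $(z,x)$ and, with the first argument of $\psi$ held at this $z$, define
\[
\phi(z',x') = P\bigl(\psi(z,Y)\le 0 \mid Z=z', X=x'\bigr) = \int_{\mathcal{Y}} \mathbb{I}\{\psi(z,y)\le 0\}\,\frac{f(z',x',y)}{f(z',x')}\,dy .
\]
This is well defined because, by Assumption~\ref{ass::regularity}, the joint density is bounded and bounded away from zero, so $f(z',x')\ge c\,|\mathcal{Y}|>0$.

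The first step is the averaging identity. For any measurable $\mathcal{U}_r(z,x)\subseteq\mathcal{B}_r(z,x)$ of positive Lebesgue measure, Bayes' rule and Fubini give
\[
P\bigl[\psi(z,Y)\le 0 \mid (Z,X)\in\mathcal{U}_r(z,x)\bigr]
= \frac{\int_{\mathcal{U}_r(z,x)} \phi(z',x')\, f(z',x')\,d(z',x')}{\int_{\mathcal{U}_r(z,x)} f(z',x')\,d(z',x')} .
\]
The denominator is strictly positive, since $f(z',x')>0$ everywhere and $\mathcal{U}_r(z,x)$ has non-zero measure. The right-hand side is therefore a convex combination, with probability weights $f/\!\int_{\mathcal{U}_r} f$, of the values of $\phi$ on $\mathcal{U}_r(z,x)$. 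Subtracting $\phi(z,x)$, which equals the integral of the constant $\phi(z,x)$ against the same weights, yields
\[
\bigl|P[\psi(z,Y)\le 0 \mid (Z,X)\in\mathcal{U}_r(z,x)] - \phi(z,x)\bigr|
\le \sup_{(z',x')\in\mathcal{B}_r(z,x)} \bigl|\phi(z',x')-\phi(z,x)\bigr| .
\]
This bound does not depend on which subset $\mathcal{U}_r$ is chosen, which is what the "arbitrary subset" in the statement requires.

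The second step is to send $r\to 0$. By the first claim of Lemma~\ref{lem::Probability Continuity}, $\phi$ is jointly continuous in $(z',x')$ at $(z,x)$, so the supremum over the shrinking ball tends to zero. The ignorability condition in Assumption~\ref{ass::decomp&Ign} ensures that conditioning on $Z=z'$ is the same as evaluating the law of $Y(z')$ given $x'$. Hence $\phi(z,x)$ is exactly the target quantity $P[\psi(z,Y)\le 0\mid Z=z,X=x]$, which concludes the argument.

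The step I expect to need the most care is justifying the averaging identity rigorously and invoking the continuity of $\phi$ in the right form. The point to watch is that the continuity is in the conditioning variables $(z',x')$ with $\psi(z,\cdot)$ fixed; this is why the first (non-Lipschitz) part of Lemma~\ref{lem::Probability Continuity} is the right tool. If that lemma were unavailable, I would prove continuity directly: $f(z',x',y)/f(z',x')$ is continuous in $(z',x')$, and it is dominated by $C/(c|\mathcal{Y}|)$ on the compact $\mathcal{Y}$. Dominated convergence under the integral sign then gives continuity of $\phi$.
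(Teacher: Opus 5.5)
Your proof is correct and follows essentially the paper's route: the same density-weighted averaging identity for the cluster conditional probability, followed by continuity of $(z',x')\mapsto P(\psi(z,Y)\le 0\mid Z=z',X=x')$ from Lemma~\ref{lem::Probability Continuity}. The only difference is the finish. The paper normalizes numerator and denominator by the Lebesgue measure of $\mathcal{U}_r(z,x)$ and squeezes the ratio using separate moduli of continuity for $f\cdot\phi$ and $f$. Your convex-combination bound by $\sup_{\mathcal{B}_r(z,x)}|\phi-\phi(z,x)|$ reaches the same conclusion more directly; the appeal to ignorability is unnecessary here, since the target is $\phi(z,x)$ by definition.
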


\subsubsection{Proof of Lemma \ref{lem::Probability Continuity}}

We have $P[\psi(z, Y)\le 0 \mid Z=z', X=x'] = \int_{\mathbb{U}(z)}f(y|z',x')dy$, where $\mathbb{U}(z)=\{y\mid \psi(z, y)\le 0\}$, and $f(y|z',x')$ is the conditional probability density of random variable $Y$ conditioned on $(Z=z',X=x')$.

Denote $f(z',x')$ as the joint probability density of random variable $(Z,X)$, then we have $f(y|z',x')=\frac{f(y,z',x')}{f(z',x')}$. Since $f(y,z',x')$ is continuous on $y$, $z'$ and $x'$ jointly and $f(z',x')$ is continuous on $z'$ and $x'$ jointly, we have that $f(y|z',x')$ is continuous on $y$, $z'$ and $x'$ jointly.

Suppose Assumption \ref{ass::regularity} holds. Since $\mathcal{X}$, $\mathcal{Y}$ and $\mathcal{Z}$ are compact and bounded, $\psi(z, y)$ is continuous on $z$ and $y$ jointly,  $\mathbb{U}(z)=\{y\mid \psi(z, y)\le 0\}$ is compact and bounded. Because $f(y|z',x')$ is continuous on $y$, $z'$ and $x'$ jointly and $\mathbb{U}(z)$, $\mathcal{Z}$ and $\mathcal{X}$ are compact and bounded sets, we can conclude that $f(y|z',x')$ is consistently continuous on $\mathbb{U}(z)\times\mathcal{Z\times\mathcal{X}}$. Denote $V_d(\mathbb{U}(z))$ as the Lebesgue measure of $\mathbb{U}(z)$. Since $0\le V_d(\mathbb{U}(z)) < \infty$, for any $z\in \mathcal{Z}$, any $\epsilon>0$ and any $(z',x')\in\mathcal{Z}\times\mathcal{X}$, there exists $\delta>0$, such that
\begin{equation*}
    \begin{aligned}
        \forall (z'',x'')\in \mathcal{Z}\times\mathcal{X}\,s.t.\,\lVert(z'',x'')-(z',x')\lVert&<\delta,\\
        \forall y\in\mathbb{U}(z),
        V_d(\mathbb{U}(z))\left|f(y\mid z'',x'')-f(y\mid z',x')\right|&<\epsilon.
    \end{aligned}
\end{equation*}

Further, we have
\begin{equation}
     \begin{aligned}
         &\lvert P[\psi(z, Y)\le 0 \mid Z=z'',X=x'']-P[\psi(z, Y)\le 0 \mid Z=z',X=x']\rvert\\
         =&\left|\int_{\mathbb{U}(z)}f(y\mid z'',x'')\,dy-\int_{\mathbb{U}(z)}f(y\mid z',x')\,dy\right|\\
         \le& \int_{\mathbb{U}(z)}\left|f(y\mid z'',x'')-f(y\mid z',x')\right|\,dy
         \le V_d(\mathbb{U}(z))\sup_{y\in \mathbb{U}(z)}\left|f(y\mid z'',x'')-f(y\mid z',x')\right|
         < \epsilon,
     \end{aligned}
\end{equation}
which means $P[\psi(z, Y)\le 0 \mid Z=z',X=x']$ is continuous on $z'$ and $x'$ jointly. Further, if $f(y|z',x')$ is consistently Lipschitz continuous on $z'$ and $x'$ jointly, for any $z\in \mathcal{Z}$, any $\epsilon>0$ and any $(z',x')\in\mathcal{Z}\times\mathcal{X}$, there exists $L>0$, such that $\forall (z'',x'')\in \mathcal{Z}\times\mathcal{X}, \forall y\in\mathbb{U}(z)$, $V_d(\mathbb{U}(z))\left|f(y\mid z'',x'')-f(y\mid z',x')\right|<L\left|(z'',x'')- (z',x')\right|$. Further, we have
\begin{equation}
     \begin{aligned}
         &\lvert P[\psi(z, Y)\le 0 \mid Z=z'',X=x'']-P[\psi(z, Y)\le 0 \mid Z=z',X=x']\rvert\\
         =&\left|\int_{\mathbb{U}(z)}f(y\mid z'',x'')\,dy-\int_{\mathbb{U}(z)}f(y\mid z',x')\,dy\right|\\
         \le& \int_{\mathbb{U}(z)}\left|f(y\mid z'',x'')-f(y\mid z',x')\right|\,dy\\
         \le& V_d(\mathbb{U}(z))\sup_{y\in \mathbb{U}(z)}\left|f(y\mid z'',x'')-f(y\mid z',x')\right|\\
         <& V_d(\mathbb{U}(z))L\left|(z'',x'')- (z',x')\right|
         \le V_d(\mathcal{Y})L\left|(z'',x'')- (z',x')\right|,
     \end{aligned}
\end{equation}
which means $P[\psi(z, Y)\le 0 \mid Z=z',X=x']$ is Lipschitz continuous on $z'$ and $x'$ jointly.

We go on to discuss the continuity about $g(z\mid X=x) = P(\psi(z, Y)\le 0\mid Z=z, X=x)$, the situation where the parameter of the inner function varies with the factors that influence the distribution, under the additional assumptions given in the lemma. $\forall (z_1, x_1), (z_2, x_2)\in\mathcal{Z\times\mathcal{X}}$. Let $d = \|(z_1, x_1) - (z_2, x_2)\|$. From the Lipschitz continuous Wasserstein distance assumption we have that, there exists a coupling $\gamma\in \Gamma(f(z_1, x_1), f(z_2, x_2))$, that
\begin{equation}\nonumber
    \mathbb{E}_{(Y_1, Y_2) \sim \gamma} \|Y_1 - Y_2\| \le W_1(f(z_1, x_1), f(z_2, x_2)) \le L_f d,
\end{equation}
which comes from the definition of Wasserstein distance. Then we have
\begin{equation}\nonumber
    \begin{aligned}
        &\left|g(z_1\mid X=x_1) - g(z_2\mid X=x_2) \right|
        =\left|E_{\gamma}\left[\mathbb{I}\{\psi(z_1, Y_1\}\le 0)  - \mathbb{I}\{\psi(z_2, Y_2)\le 0\}\right] \right|
        \le E_{\gamma}\left|\mathbb{I}(\psi(z_1, Y_1)\le 0)  - \mathbb{I}\{\psi(z_2, Y_2)\le 0\}\right|.
    \end{aligned}
\end{equation}

Note that for any $y_1, y_2\in \mathcal{Y}$, $\left|\mathbb{I}\{\psi(z_1, y_1)\le 0\}  - \mathbb{I}\{\psi(z_2, y_2)\le 0\}\right|$ takes value from $\{0, 1\}$. When it equals 1, one of the following happens:
%\begin{enumerate}
    (1) \(\psi(z_1, y_1) \le 0\) and \(\psi(z_2, y_2) > 0\);
    (2) \(\psi(z_1, y_1) > 0\) and \(\psi(z_2, y_2) \le 0\).
%\end{enumerate}

And since we assume function I has $L_I-$lipschitz continuity, we have,
\begin{equation}\label{eq-ILipCont}
        |\psi(z_1, y_1) - \psi(z_2, y_2)| 
        \le L_I \|(z_1, y_1) - (z_2, y_2)\| \le L_I (\|z_1 - z_2\| + \|y_1 - y_2\|) 
        \le L_I d + L_I \|y_1 - y_2\|.
\end{equation}

If $\left|\mathbb{I}\{\psi(z_1, y_1)\le 0\}  - \mathbb{I}\{\psi(z_2, y_2)\le 0\}\right| = 1$, then, $\min\left\{ |\psi(z_1, y_1)|, |\psi(z_2, y_2)| \right\} \le L_I (d + \|y_1 - y_2\|)$. Because if both $|\psi(z_1, y_1)|$ and $|\psi(z_2, y_2)|$ are greater than \(L_I (d + \|y_1 - y_2\|)\), we have,
$$|\psi(z_1, y_1) - \psi(z_2, y_2)| \ge \left||\psi(z_1, y_1)| - |\psi(z_2, y_2)|\right| > L_I (d + \|y_1 - y_2\|),
$$
which contradicts inequality (\ref{eq-ILipCont}). And thus,
\begin{equation}\nonumber
        \left|\mathbb{I}\{\psi(z_1, y_1)\le 0\}  - \mathbb{I}\{\psi(z_2, y_2)\le 0\}\right|
        \le\left|\mathbb{I}\{|\psi(z_1, y_1)| \le L_I(d+\|y_1-y_2\|)\}\right|  + \left|\mathbb{I}\{|\psi(z_2, y_2)|\le L_I(d+\|y_1-y_2\|)\}\right|,
\end{equation}
hence we have,
\begin{equation}\nonumber\label{eq-g-decomp}
        \left|g(z_1\mid X=x_1) - g(z_2\mid X=x_2) \right|
        \le E_{\gamma}\left[\mathbb{I}\{|\psi(z_1, Y_1)|\le L_I(d+\|Y_1-Y_2\|)\}\right] + E_{\gamma}\left[\mathbb{I}\{|\psi(z_2, Y_2)|\le L_I(d+\|Y_1-Y_2\|)\}\right].
\end{equation}

We then analyze the first term, since we assume the probability density function of $\psi(z, Y)$ where $Y\sim f(z,x)$ has an consistent upper bound, the random variable $\psi(z_1, Y_1)$ has a density of $p_{z_1, x_1}(t) \le M$. For any non-negative random variable $T$, $\mathbb{P}\left(|\psi(z_1, Y_1)| \le T \mid T=t\right) = \mathbb{P}(|\psi(z_1, Y_1)|\le t) = \int_{-t}^t p_{z_1, x_1}(s) ds \le 2Mt$, and thus $\mathbb{P}\left(|\psi(z_1, Y_1)| \le T\right) \le 2ME[T]$. Let $T = L_I(d + \|Y_1-Y_2\|)$, then we have,
\begin{equation}
    \mathbb{P}\left(\mathbb{I}\{|\psi(z_1, Y_1)|\le L_I(d+\|Y_1-Y_2\|)\}\right) \le 2ME_{\gamma}\left[L_I(d+\|Y_1-Y_2\|)\right].
\end{equation}

Similarly, the second term also satisfy this bound. Therefore,
\begin{equation}
    \left|g(z_1\mid X=x_1) - g(z_2\mid X=x_2) \right| \le 2\cdot2ML_IE_{\gamma}[d+\|Y_1-Y_2\|].
\end{equation}

From the assumption of $L_f-$Lipschitz Wasserstein distance continuity, we have that, $E_{\gamma}[\|Y_1-Y_2\|]\le L_f d$. So,
\begin{equation}\nonumber
    E_{\gamma}[d+\|Y_1-Y_2\|]\le d+L_f d=d(1+L_f).
\end{equation}

And hence, $\left|g(z_1\mid X=x_1) - g(z_2\mid X=x_2) \right| \le 4ML_I(1+L_f)d$, namely,
\begin{equation}
    {\left|g(z_1\mid X=x_1) - g(z_2\mid X=x_2) \right|}/{\|(z_1, x_1) - (z_2, x_2)\|} \le 4ML_I(1+L_f),
\end{equation}
which concludes the proof. The Lipschitz continuity constant for function $g$ is $4ML_I(1+L_f)$. Q.E.D.

\subsubsection{Proof of Lemma \ref{lem::limitation-r}}

We have
\begin{equation}
    P[\psi(z', Y)\le 0 \mid (Z, X) \in \mathcal{U}_r(z,x)] = \frac{\int_{\mathcal{U}_r(z,x)} f(z',x')P[\psi(z, Y)\le 0 \mid Z=z',X=x']\,dz'\,dx'}{\int_{\mathcal{U}_r(z,x)} f(z',x')\,dz'\,dx'},
\end{equation}
where $f(z',x')$ is the joint probability density of random variable $(Z,X)$.

Denote $u(z',x',z) = f(z',x')P[\psi(z, Y)\le 0 \mid Z=z',X=x']$. Since $f(z',x')$ and $P[\psi(z, Y)\le 0 \mid Z=z',X=x']$ are both continuous on $z'$ and $x'$ jointly, $u(z',x',z)$ is also continuous on $z'$ and $x'$ jointly. Denote
\begin{equation}
    \begin{aligned}
        \omega_u(r,z,x) &= \sup_{\lVert(z',x')-(z,x)\rVert<r}\left|u(z',x',z)-u(z,x,z)\right|,\\
        \omega_p(r,z,x) &= \sup_{\lVert(z',x')-(z,x)\rVert<r}\left|f(z',x')-f(z,x)\right|.
    \end{aligned}
\end{equation}
Because $u(z',x',z)$ and $f(z',x')$ are both continuous on $z'$ and $x'$ jointly, we have $\lim_{r \to 0}\omega_u(r,z,x)=0$ and $\lim_{r \to 0}\omega_f(r,z,x)=0$ for any $(z,x)$. For any $r>0$,
\begin{equation}
    \int_{\mathcal{U}_r(z,x)}u(z',x',z)\,dz'\,dx' = u(z,x,z)V_d(\mathcal{U}_r(z,x))+\int_{\mathcal{U}_r(z,x)}(u(z',x',z)-u(z,x,z))\,dz'\,dx',
\end{equation}
where $V_d(\mathcal{U}_r(z,x))$ is the Lebesgue measure of $\mathcal{U}_r(z,x)$.

Since $\omega_u(r,z,x) = \sup_{\lVert(z',x')-(z,x)\rVert<r}\left|u(z',x',z)-u(z,x,z)\right|$, we have
\begin{equation*}
    -\omega_u(r,z,x)V_d(\mathcal{U}_r(z,x))\le\int_{\mathcal{U}_r(z,x)}(u(z',x',z)-u(z,x,z))\,dz'\,dx'\le\omega_u(r,z,x)V_d(\mathcal{U}_r(z,x)).
\end{equation*}

Therefore, we have
\begin{equation}
    \begin{aligned}
        u(z,x,z)-\omega_u(r,z,x) &\le\frac{\int_{\mathcal{U}_r(z,x)}u(z',x',z)\,dz'\,dx'}{V_d(\mathcal{U}_r(z,x))} \le u(z,x,z)+\omega_u(r,z,x),\\
        f(z,x)-\omega_p(r,z,x) &\le\frac{\int_{\mathcal{U}_r(z,x)}f(z',x')\,dz'\,dx'}{V_d(\mathcal{U}_r(z,x))} \le f(z,x)+\omega_p(r,z,x).
    \end{aligned}
\end{equation}

Since $\lim_{r \to 0}\omega_p(r,z,x)=0$, we can take sufficiently small $r$, such that $\omega_p(r,z,x)<f(z,x)$ and for such $r$ we have
\begin{equation}\nonumber
    \frac{u(z,x,z)-\omega_u(r,z,x)}{f(z,x)+\omega_p(r,z,x)}\le P[\psi(z, Y)\le 0 \mid (Z, X) \in \mathcal{U}_r(z,x)]=\frac{\int_{\mathcal{U}_r(z,x)}u(z',x',z)\,dz'\,dx'}{\int_{\mathcal{U}_r(z,x)}f(z',x')\,dz'\,dx'}\le\frac{u(z,x,z)+\omega_u(r,z,x)}{f(z,x)-\omega_p(r,z,x)}.
\end{equation}

Since $\lim_{r \to 0}\omega_u(r,z,x)=0$ and $\lim_{r \to 0}\omega_p(r,z,x)=0$, we have $\lim_{r \to 0}\frac{u(z,x,z)-\omega_u(r,z,x)}{f(z,x)+\omega_p(r,z,x)}=\frac{u(z,x,z)}{f(z,x)}$ and $\lim_{r \to 0}\frac{u(z,x,z)+\omega_u(r,z,x)}{f(z,x)-\omega_p(r,z,x)}=\frac{u(z,x,z)}{f(z,x)}$. Further, by Squeeze Theorem, we reach conclusion
\begin{equation}\nonumber
\begin{aligned}
        &\lim_{r \to 0}P[\psi(z, Y)\le 0 \mid (Z, X) \in \mathcal{U}_r(z,x)]={u(z,x,z)}/{f(z,x)}
        =P[\psi(z, Y)\le 0 \mid Z=z,X=x],\\
    &\lim_{r\to 0}\left|P[\psi(z, Y)\le 0 \mid (Z, X) \in \mathcal{U}_r(z,x)] - P[\psi(z, Y)\le 0 \mid Z=z,X=x]\right|=0. \qquad \text{Q.E.D.}
\end{aligned}
\end{equation}

\subsection{Proof of Theorem \ref{theo::chance-consistency}}

We start with the proof of the case with kNN or LSA approximation. From Proposition~\ref{prop::Frac Consistency}, we have already known that in both kNN and LSA, 
\begin{equation}\nonumber
\sup_{z\in\mathcal{Z},x\in\mathcal{X}}\left|\frac{\hat{V}(z,x)}{\hat{U}(z,x)} - \frac{V(z,x)}{U(z,x)}\right| \xrightarrow[N\to\infty]{a.s.} 0.
\end{equation}

Note that $V(z,x)/U(z,x)=\mathbb{P}(\psi(z,Y)\le 0\mid C(z,x) = C(Z,X))$. In the case of both LSA and kNN, the cluster $(z,x)$ is in is sphere (demonstrated in the proof of Proposition~\ref{prop::DNConsistency}).
In the case of LSA, the radius $h_N = CN^{-\delta_{LSA}}$ is approaching 0 automatically with N increasing. In the case of kNN, the distance from $(z, x)$ to the k-th farthest historical data point decreases to 0 almost surely when $k/N\to 0$ and $k/\log N\to \infty$, and setting $k=CN^{\delta_{kNN}}$ where $0<\delta_{kNN}<1$ automatically satisfies the requirement. (Theorem 1.5 in \citeapp{biau2015lectures}). So, 
\begin{equation}\label{eq-theo1bd}
    \begin{aligned}
        &\sup_{z\in\mathcal{Z}, x\in \mathcal{X}} \left|\frac{\hat{V}(z,x)}{\hat{U}(z,x)} - P[\psi(z, Y)\le 0 \mid Z=z,X=x]\right|\\
        \le &\sup_{z\in\mathcal{Z}, x\in \mathcal{X}} \left\{\left|\frac{\hat{V}(z,x)}{\hat{U}(z,x)} - \frac{V(z,x)}{U(z,x)}\right| + \left|\frac{V(z,x)}{U(z,x)} - P[\psi(z, Y)\le 0 \mid Z=z,X=x] \right|\right\}\\
        =&\sup_{z\in\mathcal{Z}, x\in \mathcal{X}} \left\{\left|\frac{\hat{V}(z,x)}{\hat{U}(z,x)} - \frac{V(z,x)}{U(z,x)}\right| + \left|P[\psi(z, Y)\le 0 \mid (Z, X) \in \mathcal{U}_{r_c}(z,x) - P[\psi(z, Y)\le 0 \mid Z=z,X=x] \right|\right\},
    \end{aligned}
\end{equation}
where $r_c$ is the radius determined by the cluster. Combining Proposition~\ref{prop::Frac Consistency} and Lemma~\ref{lem::limitation-r}, we have that both terms in (\ref{eq-theo1bd}) converges to 0 when $N\to\infty$ almost surely, which concludes the proof of the case with kNN or LSA approximation. The sample efficiency for these two approximations follow the same sample efficiency of $\sup_{z\in\mathcal{Z}, x\in \mathcal{X}} \left|\frac{\hat{V}(z,x)}{\hat{U}(z,x)} - \frac{V(z,x)}{U(z,x)}\right|$, which is given in (\ref{eq-sample efficiency-LSA}) and (\ref{eq-sample efficiency-kNN}).

For CART approximation, the classifier no longer has finite VC dimension, and hence needs different proof process. From former proof and assumptions, we have the following facts:
\begin{enumerate}
    \item The trained tree T is a regular, random-split, honest tree;
    
    \item $(z_i, x_i, \mathbb{I}\{\psi(z, y_i)\le0\}$ are i.i.d with $(z_{1i}, x)$ uniform in $[0,1]^{d^{z_1}+d^x}$;

    \item $E\left[\mathbb{I}\left\{\psi(z, y_i)\le0\mid X=x,Z_1=z_1\right\}\right]$ is Lipschitz continuous;

    \item $\sup_{z ,x} E\left[exp(\lambda |\mathbb{I}\{\psi(z, Y)\le0\} - E[\mathbb{I}\{\psi(z, Y)\le0\}\mid X=x, Z_1=z_1]|) \mid X=x, Z_1=z_1\right] < \infty$ naturally holds since $0\le\mathbb{I}\{\psi(z, Y)\le0\} \le 1$;

    \item $\log N/CN^{\delta_{cart}}\xrightarrow[]{N\to\infty}0$, $N/CN^{\delta_{cart}}\xrightarrow[]{N\to\infty}0$.
\end{enumerate}

With all these, the assumptions of Lemma 7 in \citeapp{bertsimas2019predictions} are all satisfied, and thus,
\begin{equation}
    \sup_{x\in\mathcal{X},z\in\mathcal{Z}}\left|\hat{g}_N^{CART}(z\mid X=x)-g(z\mid X=x)\right|\xrightarrow[w.p.]{N\to\infty}0.
\end{equation}

Therefore, approximation $\hat{g}_N^{CART}(z\mid X=x)$ has weakly uniform consistency. Q.E.D.

\subsection{Proof to Theorem \ref{theo::asym}}

Theorem \ref{theo::asym} is the decision-dependent version of Theorem 1 to Theorem 3 in \citeapp{lin2022data}. We first demonstrate the feasibility, as in Theorem 1 and Proposition 2 in \citeapp{lin2022data}. The proof needs the following requirements:

\begin{enumerate}
    \item i.i.d and no-atom and  assumption, namely for any given $x$ and $z$, $\mathbb{P}(\psi(z, Y)=0 \mid X=x)=0$. The i.i.d assumption is given in Assumption \ref{ass::regularity}.2. And as Assumption \ref{ass::regularity}.3 requires, the joint distribution of $(z, x, y)$ is continuous and positively bounded. And as required by Assumption \ref{ass::continuous}, function $\phi$ is also equi-continuous. The no-atom assumption holds naturally;

    \item Uniform consistency. That is, $\sup_{z, x} |\hat{g}(z\mid X=x) - g(z\mid X=x)| \to 0$ almost surely (strong uniform consistency) or with probability (weak uniform consistency), which has been proven in Theorem \ref{theo::chance-consistency};

    \item The continuity of the original chance function $g(z\mid X=x)$ with respect to $z$, which has been proven in Lemma \ref{lem::Probability Continuity};

    \item Slater condition. As required in Assumption \ref{ass::slater}.
\end{enumerate}

With all these conditions on hand, following the same proof routine of \citeapp{lin2022data}, we have $\forall x\in\mathcal{X}$, for some optimal solution $z^*$, there exists a sequence $z^n=(z_1^n, z_2^n)\in \hat{P}(x)$ that $z^n\rightarrow z^*$ almost surely (or with probability).

The rest part of Theorem \ref{theo::asym} is the decision-dependent version of Theorem 3 in \citeapp{lin2022data}. The proof needs the following additional requirements: (1) The decision space is compact. As required by Assumption \ref{ass::regularity}.1; (2) Theorem 2 of \citeapp{lin2022data}: For any given $x$, $\sup_{z} |\hat{L}(z\mid X=x) - L(z\mid X=x)|\to 0, w.p.\ 1$ (in probability, resp.), which is given in Lemma \ref{lem::kNN} to Lemma \ref{lem::LSA}.

With all these conditions on hand, following the proof of \citeapp{lin2022data}, we have, the approximated objective $\hat{L}(z^n \mid X = x)$ converges to the true optimal objective $L(z^* \mid X=x)$ almost surely (or with probability), and the true objective $L(z^n \mid X = x)$ converges to the true optimal objective $L(z^* \mid X=x)$ almost surely (or with probability). Q.E.D.

\subsection{Proof to Lemma \ref{lem::cc}}

To prove this lemma, we first prove the following proposition: 

\begin{proposition}\label{prop::cc convex}
    For $N$ convex sets $A_1, A_2,\cdots,A_N$, given that $\forall i,j\in \{1,2,\cdots,N\}$, $A_i \cup A_j$ is convex. Then, $\forall k\in\{1,2,\cdots,N\}$, we have,
\begin{equation}
    \cup_{1\le i_i < i_2<\cdots<i_k\le N}\left[\cap_{i\in \{i_1, i_2,\cdots,i_k\}} A_i \right]
\end{equation}
is convex.
\end{proposition}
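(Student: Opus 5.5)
The plan is to reduce the statement to a one-dimensional counting argument along an arbitrary segment. Fix $k$ and write $U_k$ for the union in the statement. Equivalently, $U_k$ is the set of points lying in at least $k$ of the sets $A_1,\dots,A_N$.

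Take $x,y\in U_k$ and a point $p=(1-t_0)x+t_0y$ with $t_0\in[0,1]$. It suffices to show that $p$ lies in at least $k$ of the $A_i$. For each $i$, set
\[
I_i=\{t\in[0,1]: (1-t)x+ty\in A_i\}.
\]
Each $I_i$ is the preimage of the convex set $A_i$ under an affine map, so it is a (possibly empty) interval. By the hypothesis that $A_i\cup A_j$ is convex, the same argument shows that $I_i\cup I_j$ is an interval for all $i,j$. Let $c(t)=|\{i: t\in I_i\}|$. We know $c(0)\ge k$ and $c(1)\ge k$, and the goal is $c(t_0)\ge k$.

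The key step is to look at the indices $i$ whose interval misses $t_0$. If $I_i=\emptyset$, then $i$ misses both endpoints $0$ and $1$. If $I_i\neq\emptyset$ misses $t_0$, then, being an interval, it lies entirely in $[0,t_0)$ or entirely in $(t_0,1]$. Suppose some nonempty $I_i$ lies left of $t_0$ and some nonempty $I_j$ lies right of $t_0$. Then $I_i\cup I_j$ contains points on both sides of $t_0$ but not $t_0$ itself, so it is not an interval, contradicting pairwise convexity.

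Hence all nonempty missing intervals lie on the same side of $t_0$, say the left. Then none of them contains $1$, so every index missing $t_0$ also misses $1$. This gives $c(t_0)\ge c(1)\ge k$; the case where they all lie on the right is symmetric, using $t=0$. Therefore $p\in U_k$, so $U_k$ is convex.

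This is the step I expect to carry the weight: it turns the pairwise-union hypothesis into the conclusion that the missing intervals cannot straddle $t_0$. Care is needed with degenerate intervals (single points, empty sets, open or closed ends). The argument above only uses that $t_0\notin I_i\cup I_j$ while $I_i\cup I_j$ meets both sides of $t_0$, so it is robust to these cases.

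To deduce Lemma \ref{lem::cc}, fix $z_1$ and let $A_i=S(z_1,y_i)$ for the points $i$ in the cluster $\mathcal{C}(z_1,x)$, where all CCW weights equal $1/k$. Nestedness makes each pairwise union equal to the larger set, hence convex, and each $A_i$ is convex by assumption. The weight condition $\sum_{i\in\mathcal{I}}w_i\ge 1-\alpha_\xi$ is equivalent to $|\mathcal{I}|\ge m:=\lceil k(1-\alpha_\xi)\rceil$. Since intersections shrink as $\mathcal{I}$ grows, $F_\xi(z_1)$ equals the union of $\bigcap_{i\in\mathcal{I}}A_i$ over subsets with $|\mathcal{I}|=m$. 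This is convex by Proposition \ref{prop::cc convex}.

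If $m=0$, the region is all of $\mathcal{Z}_2$, which is convex, assuming $\mathcal{Z}_2$ is convex as in the subproblem discussion of \ref{BDSF}. Intersecting with $\mathcal{Z}_2$ in general preserves convexity.
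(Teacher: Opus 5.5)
Your proof is correct, but it takes a different route from the paper's.

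The paper argues combinatorially with witnessing index sets. If $x\in\bigcap_{i\in a}A_i$ and $y\in\bigcap_{i\in b}A_i$ with $|a|=|b|=k$, it keeps the common indices $a\cap b$ by convexity of each $A_i$. It then pairs the leftover indices $d_j\in a\setminus b$ one-to-one with $e_j\in b\setminus a$; both sets have size $k-|a\cap b|$. Convexity of $A_{d_j}\cup A_{e_j}$ places the intermediate point in one set of each pair, and disjointness yields $k$ distinct indices.

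You instead restrict to the segment and use the order of $[0,1]$. Intervals not containing $t_0$ cannot lie on both sides of $t_0$. Hence the indices whose sets contain $p$ include all indices whose sets contain $x$, or all whose sets contain $y$.

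The paper's argument is constructive, since it exhibits an explicit $k$-subset for $p$. However, it is tied to the cardinality (equal-weight) formulation, because the pairing needs two witness sets of the same size. Your inclusion is strictly stronger. It shows that $z_2\mapsto\sum_i w_i\,\mathbb{I}\{z_2\in A_i\}$ is quasiconcave along every segment for \emph{any} nonnegative weights. So your route gives convexity of the approximated chance-constrained region under general wSAA weights, not only for equal-weight CCW clusters.

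Your deduction of Lemma \ref{lem::cc} is also sound. You correctly note that the case $m=0$ (i.e., $\alpha_\xi=1$) lies outside the proposition's range $k\ge 1$ and requires $\mathcal{Z}_2$ to be convex, a point the paper's proof passes over.
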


\textbf{Proof to Proposition \ref{prop::cc convex}:}

$\forall k\in\{1,2,\cdots,N\}$, $\forall x, y \in \cup_{1\le i_i < i_2<\cdots<i_k\le N}\left[\cap_{i\in \{i_1, i_2,\cdots,i_k\}} A_i \right]$, there exists $1 \le a_1 < a_2<\cdots<a_k\le N$, and $1 \le b_1 < b_2<\cdots<b_k\le N$, that, $x \in \cap_{i\in \{a_1, a_2,\cdots,a_k\}} A_i, y \in \cap_{i\in \{b_1, b_2,\cdots,b_k\}} A_i.$

Denote $\{a_1, a_2,\cdots,a_k\} \cap \{b_1, b_2,\cdots,b_k\}=\{c_1, c_2,\cdots,c_h\}$, where $1\le c_1 < c_2 < \cdots < c_h \le N$, $h \le k$. Then denote $\{a_1, a_2,\cdots,a_k\} / \{c_1, c_2,\cdots,c_h\}=\{d_1, d_2,\cdots,d_{k-h}\}$, where $1\le d_1 < d_2 < \cdots < d_{k-h} \le N$. Similarly denote $\{b_1, b_2,\cdots,b_k\} / \{c_1, c_2,\cdots,c_h\}=\{e_1, e_2,\cdots,e_{k-h}\}$, where $1\le e_1 < e_2 < \cdots < e_{k-h} \le N$. Since both $x$ and $y$ belong to $A_i$, which is convex, $\forall i\in\{c_1, c_2,\cdots,c_h\}$, we have $\forall \lambda\in[0, 1]$, if $z=\lambda x+(1-\lambda)y$, $z\in A_i$, $\forall i\in\{c_1, c_2,\cdots,c_h\}$. Therefore we have $z \in \cap_{i\in \{c_1, c_2,\cdots,c_h\}} A_i$.

And since $\forall j \in \{1,\cdots,k-h\}$, $x\in A_{d_j}, y\in A_{e_j}$, $x, y\in A_{d_j} \cup A_{e_j}$. By the premise that 
$\forall i,j\in \{1,2,\cdots,N\}$, $A_i \cup A_j$ is convex, we know that $z \in A_{d_j} \cup A_{e_j}$. Namely, there exists $f_j \in \{d_j, e_j\}$, $z \in A_{f_j}$.

Since $\{c_1, c_2,\cdots,c_h\} \cap \{d_1, d_2,\cdots,d_{k-h}\}=\emptyset$, $\{c_1, c_2,\cdots,c_h\} \cap \{e_1, e_2,\cdots,e_{k-h}\}=\emptyset$, and $\{d_1, d_2,\cdots,d_{k-h}\} \cap \{e_1, e_2,\cdots,e_{k-h}\}=\emptyset$, we know that $\{c_1, c_2,\cdots,c_h, d_1, d_2,\cdots,d_{k-h}, e_1, e_2,\cdots,e_{k-h}\}$ has no duplication. And therefore, $\{c_1, c_2,\cdots,c_h, f_1, f_2,\cdots,f_{k-h}\}$ has no duplication, which in total has $k$ different members. And thus,
$$z \in \cap_{i\in \{c_1, c_2,\cdots,c_h, f_1, f_2,\cdots,f_{k-h}\}} A_i.$$

So, $z \in \cup_{1\le i_i < i_2<\cdots<i_k\le N}\left[\cap_{i\in \{i_1, i_2,\cdots,i_k\}} A_i \right]$. Therefore, $\cup_{1\le i_i < i_2<\cdots<i_k\le N}\left[\cap_{i\in \{i_1, i_2,\cdots,i_k\}} A_i \right]$ is a convex set. Q.E.D.

Back to Lemma \ref{lem::cc}. Note that for any $z_1\in \mathcal{Z}_1$, each feasible solution $z_2$ should be in at least $\lceil k(1- \alpha) \rceil=K \le k$ sub-feasible regions $S(y_i)$, which are $\{z_2\in \mathcal{Z}_2 \mid \psi(z_1, z_2, y_i) \le 0\}, \forall\ i\in\{1,2,\cdots,N\}$. So the final feasible region is the union of all possible regions that are the intersection of at least $K$ sub-feasible regions, fitting the structure proposed in Proposition \ref{prop::cc convex} .  And since for any $y_1, y_2$, the sub-feasible regions $S(y_1)$ and $S(y_2)$ are nested and convex, the union of them $S(y_1) \cup S(y_2)$ must be convex. According to Proposition \ref{prop::cc convex}, the final feasible region is convex. Q.E.D.

\subsection{Proof to Proposition \ref{prop::var_ss}}

Under the CCW approximation, the approximated chance constraint implies that for those historical points in the cluster, there're at least $\lceil k(1-\alpha) \rceil$ of them that satisfy $l(p, q, d_i) \le -v$. For any $p$, $l(p, q, d_i)$ is a v-shape piecewise linear function for $q$, with a minimum value of $-(p-c)d_i$ at $q=d_i$ and a starting value of 0 at $q=0$.
\begin{equation}
    l(p, q, d_i) = \left\{
    \begin{aligned}
        &-(p-c)q, \quad &&q \le d_i,\\
        &-(p-c)d_i + (c-s)(q-d_i), \quad &&q > d_i.
    \end{aligned}
    \right.
\end{equation}

So for any $d_i$, since we assume there's no chance that $(p-c)d_i = v$, there're two possible situations:

1. $(p-c)d_i < v$: The minimum value of $l(p, q, d_i)$ is still greater than v, and $l(p, q, d_i) \le v$ will never hold for any $q$.

2. $(p-c)d_i > v$: When $v/(p-c) \le q \le d_i + ((p-c)d_i - v)/(c-s)$, $l(p, q, d_i) \le -v$ holds. Notice that for all possible $d_i > v/(p-c)$, the starting point of this interval is the same. And the ending point is increasing with respect to $d_i$.

Denote $d_i^{rhs} = d_i + ((p-c)d_i - v)/(c-s)$. Assume there're $\eta \ge \lceil K(1-\alpha) \rceil$ historical points in the cluster that satisfy $d_i > v/(p-c)$. It's easy to see that for all possible $d_i > v/(p-c)$ in the cluster, their corresponding feasible intervals for $q$ overlap in $[q_{min}, d_{(1)}^{rhs}]$ where $d_{(1)}$ is the minimum $d_i$ that's greater than $q_{min}$.

Next, we start the discussion from the simplest case, assuming that no data points have equal $d_i$, i.e., $d_{(1)} < d_{(2)} < \cdots < d_{(\eta)}$. When $d_{(1)}^{rhs} < q \le d_{(2)}^{rhs}$, the feasible regions corresponding to the remaining $\eta-1$ data points overlap, excluding the data point corresponding to $d_{(1)}$. Following this reasoning, it is not difficult to see that for $q$ to fall within the feasible regions of at least $\lceil K(1-\alpha) \rceil$ data points, $q$ should satisfy $q_{min} \le q \le d_{(\eta - \lceil K(1-\alpha) \rceil + 1)}^{rhs}$.

Considering a more complex situation where some data points have equal $d_i$ values, in the set of $\eta$ data points satisfying the conditions in the cluster, there are $\beta$ distinct $d_i$ values. Assume that after sorting, we have $d_{(1)} < d_{(2)} < \cdots < d_{(\beta)}$, where there are $n_i$ data points corresponding to each $d_{(i)}$, and $\sum_{i=1}^{\beta} n_i = \eta$. In this case, when $d_{(i-1)}^{rhs} < q \le d_{(i)}^{rhs}$, $q$ falls within the feasible regions of $\eta - \sum_{j=0}^{i-1} n_j$ data points in the cluster, where $n_0 = 0$. Therefore, for $q$ to fall within the feasible regions of at least $\lceil k(1-\alpha) \rceil$ data points, $q$ should satisfy $q_{min} \le q \le d_{(n^*)}^{rhs}$, where $n^* = \max\{i \mid \sum_{j=0}^{i-1} n_j \le \eta - \lceil k(1-\alpha) \rceil\}$. This means that there are at most $(\eta - \lceil k(1-\alpha) \rceil)$ $d_i$ values that are less than $d_{n^*}$, which aligns with the simplest case where $n_1 = n_2 = \cdots = n_\eta = 1$. And since there're $N-\eta$ historical points that are either not in the cluster or $d_i < q_{min}$, there are at most $(N - \eta + \eta - \lceil k(1-\alpha) \rceil) = (N - \lceil k(1-\alpha) \rceil)$ $D_i$ values that are less than $d_{n^*}$. In other words, when $D$ is sorted in a descending order, the $\lceil k(1-\alpha) \rceil$-th $D_i^{rhs}$ value corresponds to the required right hand side of the interval.

Based on this conclusion, $q_{min} \le q \le d_{(n^*)}^{rhs}$ is equivalent to the approximated chance constraint (\ref{Est-var}). Q.E.D.

\subsection{Proof to Proposition \ref{prop::optimal_solution}} \label{app::prop optimal solution}
Let $f(q) = -K(p-c)q + \sum_{i=1}^N \mathcal{K}_i(p-s)\max\{q-d_i, 0\}$, which is exactly the objective to minimize in (\ref{PSNP-subproblem}). We prove that $q^* \in \arg\min_q f(q)$. $f(q) = -K(p-c)q + \sum_{\mathcal{K}_i=1} (p-s)\max\{q-d_i,0\}$. Since $\mathcal{K}_i(p-s)$ is a fixed value and $\max\{q-d_i, 0\}$ is a continuous piecewise linear function with respect to $q$, it follows that $f(q)$ is also a continuous piecewise linear function of $q$. Such functions attain their maximum or minimum values only at the endpoints of the interval or at the endpoints of the pieces. Consequently, the minimum value of $f(q)$ must occur at one of the points in $\{d_{(1)}, \cdots, d_{(k)}, \hat{q}\}$. This function is exemplified in Figure \ref{fig:approximated obj demo}.

\begin{figure}[htpb]
    \FIGURE
    {\includegraphics[width=0.35\linewidth]{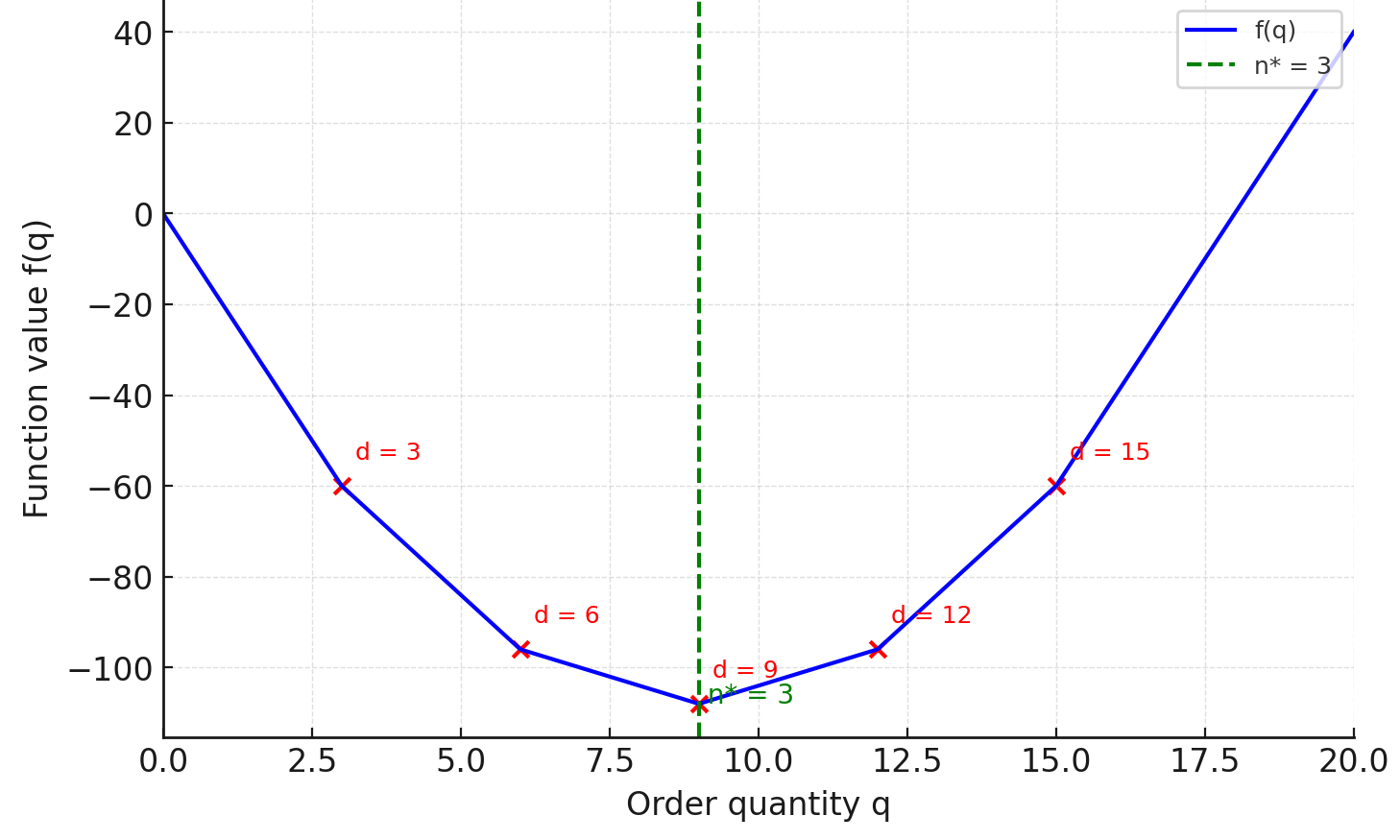}} % 图片内容
    {\centering A demonstration of the approximated objective function in the subproblem. \label{fig:approximated obj demo}} % 图题
    {} % 空的备注
\end{figure}

When $q < d_{(1)}$, all terms in the sum are zero, and thus, $f(q)$ has a minimal gradient with respect to $q$ of $-K(p-c)$. As $q$ increases, more terms in the sum become nonzero. When $d_{(j)} < q < d_{(j+1)}$, we have $\sum_{\mathcal{K}_i=1} (p-s)\max\{q-d_i,0\} = (p-s)\sum_{i \le j} (q - d_{(i)})$, resulting in a gradient of $f(q)$ with respect to $q$ being $j(p-s) - k(p-c)$. Therefore, when $j(p-s) - K(p-c) \le 0$, it follows that $f(d_{(j)}) \ge f(d_{(j+1)})$; when $j(p-s) - k(p-c) > 0$, we have $f(d_{(j)}) < f(d_{(j+1)})$.

Since $n^*=\min\{n \mid n(p-s)-K(p-c) > 0\}$, when $j < n^*$, it holds that $j(p-s) - K(p-c) \le 0$, implying $f(d_{(1)}) \ge f(d_{(2)}) \ge \cdots \ge f(d_{(n^*)})$. When $j > n^*$, we have $j(p-s) - K(p-c) > 0$, leading to $f(d_{(n^*)}) < f(d_{(n^*+1)}) < \cdots < f(d_{(K)}) < f(\hat{q})$. Consequently, among the potential points for achieving the minimum value, $f(d_{(n^*)})$ reaches the minimum value, which completes the proof for $q^* \in \arg\min_q f(q)$. Q.E.D.

\end{document}